\newcommand{\calG}{\mathcal{G}}
\newtheorem{Theorem}{Theorem}
\newtheorem{Lemma}{Lemma}
\newtheorem{Remark}{Remark}
\newtheorem{Proposition}{Proposition}
\newcommand{\be}{\begin{equation}}
	\newcommand{\ee}{\end{equation}}
\newcommand{\bea}{\begin{eqnarray}}
	\newcommand{\eea}{\end{eqnarray}}
\newcommand{\beas}{\begin{eqnarray*}}
	\newcommand{\eeas}{\end{eqnarray*}}
\newcommand{\X}{{\mathbf{X}}}
\newcommand{\B}{{\mathbf{B}}}
\newcommand{\Var}{{\rm Var}}
\newcommand{\Cov}{{\rm Cov}}
\newcommand{\Y}{{\mathbf{Y}}}
\newcommand{\Z}{{\mathbf{Z}}}
\newcommand{\bbP}{{\mathbb{P}}}
\newcommand{\bbE}{{\mathbb{E}}}
\newcommand{\rank}{{\rm rank}}
\newcommand{\tr}{{\rm tr}}
\newcommand{\diag}{{\rm diag}}
\newcommand{\SVD}{{\rm SVD}}
\newcommand{\argmin}{\mathop{\rm arg\min}}
\newcommand{\argmax}{\mathop{\rm arg\max}}
\newcommand{\sigmax}{\sigma_{\textrm{max}}}
\newcommand{\sigsum}{\sigma_{\textrm{sum}}}
\newcommand{\sigmaxch}{\check{\sigma}_{\textrm{max}}}
\newcommand{\sigsumch}{\check{\sigma}_{\textrm{sum}}}
\newcommand{\stepa}[1]{\overset{\rm (a)}{#1}}
\newcommand{\stepb}[1]{\overset{\rm (b)}{#1}}
\newcommand{\stepc}[1]{\overset{\rm (c)}{#1}}
\newcommand{\stepd}[1]{\overset{\rm (d)}{#1}}
\begin{document}

\begin{frontmatter}
\title{Heteroskedastic PCA: Algorithm, Optimality, and Applications}
\runtitle{Heteroskedastic PCA}

\begin{aug}
\author[A]{\fnms{Anru R.} \snm{Zhang}\ead[label=e1]{anruzhang@stat.wisc.edu}},
\author[B]{\fnms{T. Tony} \snm{Cai}\ead[label=e2]{tcai@wharton.upenn.edu}}
\and
\author[C]{\fnms{Yihong} \snm{Wu}\ead[label=e3]{yihong.wu@yale.edu}}
\address[A]{Department of Statistics, University of Wisconsin-Madison and Department of Biostatistics \& Bioinformatics, Duke University, \printead{e1}}

\address[B]{Department of Statistics, The Wharton School, University of Pennsylvania, \printead{e2}}

\address[C]{Department of Statistics and Data Science, Yale University, \printead{e3}}
\end{aug}

\begin{abstract}
A general framework for principal component analysis (PCA) in the presence of heteroskedastic noise is introduced. We propose an algorithm called HeteroPCA, which involves iteratively imputing the diagonal entries of the sample covariance matrix to remove estimation bias due to heteroskedasticity. This procedure is computationally efficient and provably optimal under the  generalized spiked covariance model. A key technical step is a deterministic robust perturbation analysis on singular subspaces, which can be of independent interest. The effectiveness of the proposed algorithm is demonstrated in a suite of problems in high-dimensional statistics, including singular value decomposition (SVD) under heteroskedastic noise, Poisson PCA, and SVD for heteroskedastic and incomplete data.
\end{abstract}

\begin{keyword}[class=MSC2020]
\kwd[Primary ]{62H12}
\kwd{62H25}
\kwd[; secondary ]{62C20}
\end{keyword}

\begin{keyword}
\kwd{factor analysis model}
\kwd{heteroskedasticity}
\kwd{perturbation bound}
\kwd{principal component analysis} 
\kwd{singular value decomposition}
\end{keyword}

\footnotetext{
The research of Anru Zhang was supported in part by NSF CAREER award DMS-1944904, NSF grant DMS-1811868, and NIH grant R01-GM131399-01. The research of Tony Cai was supported in part by NSF grants DMS-1712735 and  DMS-2015259 and NIH grants R01-GM129781 and R01-GM123056. The research of Yihong Wu was supported in part by the NSFgGrant CCF-1527105, an NSF CAREER award CCF-1651588, and an Alfred Sloan fellowship.}

\end{frontmatter}

\begin{sloppypar} 
	\section{Introduction}\label{sec:intro}
	
	Principal component analysis (PCA) is a ubiquitous tool in statistics, econometrics, machine learning, and applied mathematics. The central aim of PCA is to extract hidden low-rank structures from noisy observations. The spiked covariance model has been well studied and used as a baseline for both methodological and theoretical developments for PCA \citep{johnstone2001distribution,baik2005phase,baik2006eigenvalues,paul2007asymptotics,donoho2018optimal,nadler2008finite}. Under this model, one observes $Y_1,\ldots, Y_n \overset{iid}{\sim}N\left(\mu, \Sigma_0 + \sigma^2 I_p\right)$, where $\Sigma_0 = U\Lambda U^\top$ is a symmetric low-rank matrix and $I_p$ is a $p$-dimensional identity matrix. The spiked covariance model can be equivalently written as 
	\begin{equation}\label{eq:spikecov}
		Y_k = X_k + \varepsilon_k,\quad  X_k\overset{iid}{\sim} N(\mu, \Sigma_0),\quad \varepsilon_k\overset{iid}{\sim} N(0, \sigma^2 I_p), \quad k=1, \ldots, n.
	\end{equation}
	The goal is either to recover $\Sigma_0$, $\Lambda$, or $U$, and it is often done through the sample covariance matrix of $Y_1,\ldots, Y_n$, i.e.
	\begin{equation}\label{eq:sample-covariance}
		\widehat{\Sigma} = \frac{1}{n-1} (\Y - \bar{Y}1_n^\top)(\Y - \bar{Y}1_n^\top)^\top = \frac{1}{n-1}\sum_{k=1}^n (Y_k-\bar{Y})(Y_k-\bar{Y})^\top,
	\end{equation}
	where $\Y=[Y_1, \ldots, Y_n]$ and $\bar{Y} = \frac{1}{n}\sum_{k=1}^n Y_k.$ The asymptotic properties of eigenvalues and eigenvectors of $\widehat{\Sigma}$ have been well established in literature and their estimation based on the eigendecomposition of $\widehat{\Sigma}$ has been introduced and studied. A key assumption in these analyses is homoskedasticity, in the sense that each $\varepsilon_k$ is assumed to be spherically symmetric Gaussian.

	\subsection{Heteroskedastic PCA}
	\label{sec:intro-HPCA}
	
	In many applications, the noise term $\varepsilon_k$ can be highly heteroskedastic in the sense that the magnitude of noise entries varies significantly in the data matrix. Heteroskedastic noise is especially common in datasets with different types of variables. For example, in various biological sequencing and photon imaging data, the observations are discrete counts that are commonly modeled by Poisson, multinomial, or negative binomial distributions \citep{salmon2014poisson,cao2020multisample} and are naturally heteroskedastic. In network analysis and recommender systems, the observations are usually binary or ordinal, which are heteroskedastic as well.

	Motivated by these applications, it is natural to relax the homoskedasticity assumption in \prettyref{eq:spikecov} and consider the following generalized spiked covariance model \citep{bai2012sample,yao2015sample}:
	\begin{equation}\label{eq:multivariate-normal}
		\begin{split}
			& Y = X + \varepsilon,\quad \mathbb{E} X = \mu, \quad \Cov(X) = \Sigma_0,\\
			& \mathbb{E}\varepsilon = 0, \quad \Cov(\varepsilon_j) = \sigma_j^2, \\ 
			& \varepsilon = (\varepsilon_1,\ldots, \varepsilon_p)^\top;\quad  X, \varepsilon_1,\ldots, \varepsilon_p \text{ are independent}.
		\end{split}
	\end{equation}
	Here, $\Sigma_0$ is rank-$r$ and admits eigendecomposition $\Sigma_0 = U\Lambda U^\top$ with $U\in \mathbb{R}^{p\times r}$ and $\Lambda\in \mathbb{R}^{r\times r}$. $\sigma_1^2,\ldots, \sigma_p^2$ are unknown and not necessarily identical. This model is also widely used as the standard model in the literature of \emph{factor analysis}  (see, e.g., \cite{tipping1999probabilistic,ghosh2009default} and the references therein). Given i.i.d.~copies $Y_1,\ldots, Y_n$ drawn from \prettyref{eq:multivariate-normal} and the rank $r$, the goal is to estimate $U$. 
	
	Performing the classical PCA on data with  heteroskedastic noise can often lead to inconsistent estimates. The estimation of $U$ using the classical PCA is equivalent to the estimation of eigenvectors of the sample covariance matrix $\widehat{\Sigma}$.
	Since $\mathbb{E}\widehat{\Sigma} = \Sigma_0 + \diag(\sigma_1^2,\ldots, \sigma_p^2)$, the top eigenvectors of $\mathbb{E}\widehat{\Sigma}$ and $\Sigma_0$ will coincide when $\sigma_1^2,\ldots, \sigma_p^2$ are the same. But in the case of heteroskedastic noise, the differences in the bias terms $\sigma_1^2,\ldots, \sigma_p^2$ can lead to significant difference between the principal components of $\mathbb{E}\widehat{\Sigma}$ and those of $\Sigma_0$. Similar phenomena appear in other problems with heteroskedastic noise (see Section \ref{sec:applications} for details). 
	
	To cope with the bias on the diagonal elements of covariance matrix, \cite{florescu2016spectral} introduced the \emph{diagonal-deletion SVD} in the context of bipartite stochastic block model. The idea is to set the diagonal of the sample covariance matrix to zero before performing singular value decomposition.  However, it is a priori unclear whether zeroing out the diagonals is always the best choice, because it may change the singular subspace entirely. 
	
	In this paper, we introduce \emph{HeteroPCA}, a novel method for heteroskedastic principal component analysis. Instead of zeroing out the diagonal entries of the sample covariance/Gram matrix, we propose to iteratively update the diagonal entries based on the off-diagonals, so that the bias incurred on the diagonal is significantly reduced and more accurate estimation can be achieved. The performance of the proposed procedure is studied both theoretically and numerically. By establishing matching minimax upper and lower bounds, we show that HeteroPCA achieves the optimal rate of convergence for a range of settings under the generalized spiked covariance model. 
	
	Classic perturbation bounds, such as Davis-Kahan and Wedin's theorems \citep{davis1970rotation,wedin1972perturbation}, play key roles in the theoretical analysis of various PCA methods.
	These tools may not be suitable for the analysis of heteroskedastic PCA due to the aforementioned bias on the diagonal entries of the sample covariance matrix. To tackle this difficulty, we develop a new deterministic subspace perturbation bound (Theorem \ref{th:diagonal-less}), which provides the key technical tool for analyzing HeteroPCA and may be of independent interest.

	In addition to heteroskedastic PCA for the generalized spiked covariance model, the proposed HeteroPCA algorithm is applicable to a collection of high-dimensional problems with heteroskedastic data. Several applications are discussed in detail in Section \ref{sec:applications}, including \emph{SVD under heteroskedastic noise}, \emph{Poisson PCA}, and \emph{SVD for heteroskedastic and incomplete data}. Our results can also be useful in heteroskedastic canonical correlation analysis, heteroskedastic tensor SVD, exponential family PCA, and community detection in bipartite stochastic network.

	\subsection{Related Literature}
	
	\cite{bai2012sample,yao2015sample} extended the theory for regular spiked covariance model \eqref{eq:spikecov} to the generalized spiked covariance model and studied the limiting distribution of eigenvalues of the sample covariance matrix. \cite{hong2016towards,hong2018asymptotic,hong2018optimally} introduced an alternative model for heteroskedastic data, where the noise is non-uniform \emph{across} different samples but uniform \emph{within} each sample. Under this model, \cite{hong2016towards,hong2018asymptotic} studied the asymptotic performance of PCA and \cite{hong2018optimally} developed the optimal weights for weighted PCA with theoretical guarantees. \cite{vaswani2016correlated,vaswani2017finite-allerton,vaswani2018pca,vaswani2020fast} provide a comprehensive study on the methodology and theory of PCA where data-dependent, non-isotropic, or correlated noise and missing values may appear. The detailed comparison of our results and \cite{vaswani2018pca,vaswani2020fast} are given in later Remarks \ref{rm:implication-vaswani} and \ref{rm:vaswani-compare}.
	
	Our work is also closely related to a substantial body of literature on factor model analysis \citep{thomson1939factorial,lawley1962factor,tipping1999probabilistic,ghosh2009default,bai2012statistical,owen2016bi,wang2017asymptotics}. There have been various approaches developed to estimate the principal components in factor models, such as the regression method \citep{thomson1939factorial}, weighted least squares \citep{bartlett1937statistical}, EM \citep{tipping1999probabilistic}, and Bayesian MCMC \citep{ghosh2009default}. The asymptotic theory for factor model analysis was also extensively studied (e.g., \cite{bai2012statistical,wang2017asymptotics} and the references therein). Different from the previous results, this paper mainly concerns a non-asymptotic framework, providing algorithms with provable guarantees and allowing heteroskedastic noise within each sample in the high-dimensional regime that $n, p, r$ can all grow. 
	
	Matrix denoising \citep{donoho2014minimax,nadakuditi2014optshrink,gavish2017optimal,donoho2018optimal}, where the central goal is to estimate low-rank matrices from noisy observations, is closely related to this work. In order to get an accurate estimation of overall low-rank matrices from observations perturbed by random noise, the singular value thresholding \citep{donoho2014minimax,chatterjee2015matrix} and the singular value shrinkage \citep{nadakuditi2014optshrink,gavish2017optimal,donoho2018optimal} were proposed and widely studied recently. Departing from these previous results, this paper focuses on estimating the singular subspace instead of the overall matrix, which achieves better performance in singular subspace estimation than denoising the whole matrix by previous methods and then performing a rank-$r$ SVD. 
	
	In Section \ref{sec:matrix-completion}, we discuss the application of HeteroPCA to SVD for heteroskedastic and incomplete data. This problem is related to a body of literature on matrix completion that we will give a review in Section \ref{sec:matrix-completion}.

	\subsection{Organization of the Paper}
	
	After a brief introduction of notation and definitions (Section \ref{sec:notation}), we focus on the generalized spiked covariance model, present the HeteroPCA algorithm (Section \ref{sec:method-heteroPCA}), and develop matching minimax upper and lower bounds of the estimation error (Section \ref{sec:theory-heteroPCA}). Then, we introduce a deterministic robust perturbation analysis that serves as a key technical step in our analysis (Section \ref{sec:deterministic}). We also illustrate main proof ideas in \prettyref{sec:proof-sketch}. In Section \ref{sec:applications}, we discuss the applications of established results. Numerical results are given in Section \ref{sec:simu}. The proofs of main results are given in Section \ref{sec:proof}. The additional proofs and technical lemmas are provided in the supplementary materials \citep{zhang2018supplement}.
	
	\section{Optimal Heteroskedastic Principal Component Analysis}\label{sec:pca}
	
	\subsection{Notation and Preliminaries}
	\label{sec:notation}
	
	We use lowercase letters, e.g., $x, y$, to denote scalars or vectors and use uppercase letters, e.g, $U, M$ to denote matrices. For sequences of positive numbers $\{a_k\}$ and $\{b_k\}$, we write $a_k \lesssim b_k$ or $b_k\gtrsim a_k$ if there exists a uniform constant $C>0$ such that $a_k \leq Cb_k$ for all $k$. We also write $a_k \asymp b_k$ if $a_k\lesssim b_k$ and $a_k\gtrsim b_k$ both hold. For any matrix $M \in \mathbb{R}^{p_1\times p_2}$, let $\lambda_k(M)$ be the $k$-th largest singular value. Then, the SVD of $M$ can be written as $M = \sum_{k=1}^{p_1\wedge p_2}\lambda_k(M) u_kv_k^\top$. Let $\SVD_r(M) = [u_1 ~ \cdots u_r]$ be the collection of leading $r$ left singular vectors and ${\rm QR}(M)$ be the Q part of QR orthogonalization of $M$. The matrix spectral norm and Frobenius norm are defined as
	$\|M\| = \sup_{\|u\|_2=1} \|Mu\|_2 = \lambda_1(M)$ and $\|M\|_F = (\sum_{i,j}M_{ij}^2)^{1/2} = (\sum_k \lambda_k^2(M))^{1/2}$. Let $I_r$, $0_{m\times n}$, and $1_{m\times n}$ be the $r$-by-$r$ identity, $m\times n$ zero, and $m\times n$ all-one matrices, respectively. Also let $0_{m}$ and $1_{m}$ denote the $m$-dimensional zero and all-one column vectors. Denote $\mathbb{O}_{p, r} = \{U\in \mathbb{R}^{p\times r}: U^\top U=I_r\}$ as the set of all $p$-by-$r$ matrices with orthonormal columns. For $U\in \mathbb{O}_{p, r}$, we note $U_{\perp}\in \mathbb{O}_{p, p-r}$ as the orthogonal complement so that $[U ~ U_{\perp}]\in \mathbb{R}^{p\times p}$ is a complete orthogonal matrix. 
	
	Motivated by incoherence condition, a widely used assumption in the matrix completion literature \citep{candes2009exact}, we define \emph{incoherence constant} of $U\in \mathbb{O}_{p, r}$ as
	\begin{equation}
		I(U) = (p/r) \max_{i \in [p]} \|e_i^\top U\|_2^2.
		\label{eq:incoherence}
	\end{equation}
	For any $U_1, U_2\in \mathbb{O}_{p, r}$, we define the $\sin\Theta$ distance $\|\sin\Theta(U_1, U_2)\| \triangleq \|U_{1\perp}^\top U_2\| = \|U_{2\perp}^\top U_1\|$. For any square matrix $A$, let $\Delta(A)$ be $A$ with all diagonal entries set to zero and $D(A)$ be $A$ with all off-diagonal entries set to zero. Then $A = \Delta(A) + D(A)$. We define the Orlicz-$\phi_2$ norm of any random variable $Y$ as $\|Y\|_{\psi_2} = \sup_{q\geq 1}q^{-1/2}(\mathbb{E}|Y|^q)^{1/q}$. A random variable $Y$ is called $\sigma^2$-sub-Gaussian if $\|Y/\sigma\|_{\psi_2}\leq C$ for some constant $C>0$; a random vector $X$ is called $\Sigma$-sub-Gaussian if $\max_{q\geq 1, v\in \mathbb{R}^p}\|v^\top \Lambda^{-1/2} U^\top Y \|_{\psi_2}\leq C$ (here, $\Sigma = U\Lambda U^\top$ is the eigendecomposition of $\Sigma$) for some constant $C>0$. We use $C, C_1, \ldots, c, c_1, \cdots $ to respectively represent generic large and small constants, whose values may differ in different lines.
	
	\subsection{Methods for Heteroskedastic PCA}\label{sec:method-heteroPCA}
	
	Suppose one observes i.i.d. copies $Y_1,\ldots, Y_n$ of $Y$ from the generalized spiked covariance model \eqref{eq:multivariate-normal}. Let $\widehat{\Sigma}$ be the sample covariance matrix defined as \eqref{eq:sample-covariance}. The regular SVD estimator $\widetilde{U} = \SVD_r(\widehat{\Sigma})$, i.e., the leading $r$ left singular vectors of $\widehat{\Sigma}$, is the natural estimator of $U$, the leading singular vectors of $\Sigma_0$. 
	An important variant of Davis-Kahan's theorem \citep{davis1970rotation} given by Yu, Wang, and Samworth \citep{yu2014useful} yields
	\begin{equation}\label{ineq:tilde-U}
		\left\|\sin\Theta(\widetilde{U}, U)\right\| \lesssim \frac{\|\widehat{\Sigma} - (\Sigma_0 + \beta I_p)\|}{\lambda_r(\Lambda)}\wedge 1,
	\end{equation}
	which holds for any scalar $\beta\geq 0$ and cannot be improved in general. As briefly discussed earlier, since $\mathbb{E}\widehat{\Sigma} = \Sigma_0 + \diag(\sigma_1^2,\ldots, \sigma_p^2)$, when $\sigma_1^2,\ldots, \sigma_p^2$ have different values, the diagonal entries of the perturbation matrix $\widehat{\Sigma} - (\Sigma_0 + \beta I_p)$ may be significantly larger than the rest. As a result, $\widetilde{U}$ can be a suboptimal estimator for $U$.
	
	To achieve a more accurate estimate of $U$, we propose the following Algorithm \ref{al:heteroPCA} named HeteroPCA. The central idea is to iteratively impute the diagonal entries of the sample covariance matrix $\widehat{\Sigma}$ by the diagonals of its low-rank approximation.
	\begin{algorithm}[!h]
		\caption{HeteroPCA}
		\begin{algorithmic}[1]
			\State Input: matrix $\widehat{\Sigma}$, rank $r$, maximum number of iterations $T$.
			\State Initialize by setting the diagonal of $\widehat{\Sigma}$ to zero: $N^{(0)} = \Delta(\widehat{\Sigma})$, $t = 0$.
			\Repeat
			\State Perform SVD on $N^{(t)}$ and let $\widetilde{N}^{(t)}$ be its best rank-$r$ approximation:
			\begin{equation*}
				\begin{split}
					& N^{(t)} = U^{(t)}\Sigma^{(t)}(V^{(t)})^\top = \sum_{i} \lambda_i^{(t)} u_i^{(t)} (v_i^{(t)})^\top, \quad \lambda_1^{(t)}\geq \cdots\geq \lambda_{m}^{(t)} \geq 0,\\ 
					& \widetilde{N}^{(t)} = \sum_{i=1}^r \lambda_i^{(t)} u^{(t)}_i (v^{(t)}_i)^\top. 
				\end{split}
			\end{equation*}
			\State Update $N^{(t+1)} = D(\widetilde{N}^{(t)}) + \Delta(N^{(t)})$, i.e., replace the diagonal entries of $N^{(t)}$ by those in $\widetilde{N}^{(t)}$:
			\begin{equation}
				N^{(t+1)}_{ij} = \left\{\begin{array}{ll}
					N^{(t)}_{ij} = \widetilde{N}^{(t)}_{ij}, & i=j;\\
					\widehat{\Sigma}_{ij}, & i\neq j.
				\end{array}\right.
			\end{equation}
			\State $t = t+1$.
			\Until{convergence or maximum number of iterations reached.}
			\State Output: $\widehat{U} = U^{(T)} = [u_1^{(T)} ~ \cdots ~ u_r^{(T)}]$.
		\end{algorithmic}
		\label{al:heteroPCA}
	\end{algorithm}
	In Algorithm \ref{al:heteroPCA}, since $\widehat{\Sigma}, N^{(t)}$ are symmetric, we have $u_i^{(t)} = v_i^{(t)}$ or $-v_i^{(t)}$. In contrast to most previous work on matrix completion and robust PCA, where the entries to be imputed are missing at random, here our goal is to impute the diagonal entries. Moreover, HeteroPCA can be interpreted as the projection gradient descent (PGD) for the following rank-constrained (non-convex) optimization problem: 
	\begin{equation}
		\min_{\rank(\tilde{N})\leq r} \left\|\Delta(\hat{\Sigma} - \tilde{N})\right\|_F^2.
		\label{eq:rankconst}
	\end{equation}
	To see this connection, we first note that $\tilde{N}^{(t)}$ is the best rank-$r$ approximation of $N^{(t)}$, which correspond to the projection step in PGD; next, $\nabla_{\tilde{N}}\|\Delta(\hat{\Sigma} - \tilde{N})\|_F^2 = 2\Delta(\tilde{N} - \hat{\Sigma})$ and the operator norm of $\nabla^2_{\tilde{N}}\|\Delta(\hat{\Sigma} - \tilde{N})\|_F^2$ is 2, where $\nabla_{\tilde{N}}$ and $\nabla_{\tilde{N}}^2$ are the gradient and Hessian with respect to $\tilde{N}$, respectively. Based on the theory of PGD (see, e.g., Section 3.3 in \cite{bubeck2015convex}), the smoothness parameter of the loss function $\beta= 2$ and the update,
	\begin{equation*}
		\begin{split}
			N^{(t+1)} = & \tilde{N}^{(t)} - (1/\beta)\nabla_{\tilde{N}^{(t)}}\|\Delta(\hat{\Sigma}-\tilde{N}^{(t)})\|_F^2 = \tilde{N}^{(t)} - \Delta(\tilde{N}^{(t)}-\hat{\Sigma}) \\
			= & D(\tilde{N}^{(t)}) + \Delta(\hat{\Sigma}) = D(\tilde{N}^{(t)}) + \Delta(N^{(t)}),
		\end{split}
	\end{equation*}
	corresponds to the gradient descent step in PGD.
	Due to the non-convexity of \eqref{eq:rankconst}, existing convergence results for PGD do not apply to Algorithm \ref{al:heteroPCA}, for which we provide a direct analysis next.
	
	\subsection{Theoretical Analysis}\label{sec:theory-heteroPCA}
	Denote 
	\begin{equation*} 
		\sigmax^2 \triangleq \max_{i} \sigma_i^2, \quad \sigsum^2 \triangleq \sum_i \sigma_i^2.
		\label{eq:sigmasigma}
	\end{equation*}
	Recall that $\Sigma_0 = U\Lambda U^\top$ is rank-$r$, so $\lambda_r(\Lambda)$ is the smallest non-zero eigenvalue of $\Sigma_0$. We have the following theoretical guarantee for Algorithm \ref{al:heteroPCA}. 
	\begin{Theorem}[Heteroskedastic PCA: upper bound]\label{th:heterogeneous-PCA}	
		Consider the generalized spiked covariance model \eqref{eq:multivariate-normal}, where $X_i$ and $\varepsilon_i$ are $\Sigma$-sub-Gaussian and $\sigma_i$-Gaussian, respectively. 
		Let $Y_1,\ldots, Y_n$ be i.i.d.~samples from \eqref{eq:multivariate-normal}. Assume $n\geq (C_0r) \wedge C_0\log\left(\sigma_r(\Lambda)/\sigsum^2\right)$ and $\|\Lambda\|/\lambda_r(\Lambda) \leq C_0$ for constant $C_0>0$. There exists constant $c_I>0$ such that if the incoherence constant $I(U)$ (defined in \eqref{eq:incoherence}) satisfies $I(U) \leq c_I p/r$, then the output $\widehat U$ of Algorithm \ref{al:heteroPCA} applied to the sample covariance matrix $\widehat{\Sigma}$ with the number of iterations $T = \Omega\left(\log \left(n\lambda_r(\Lambda)/\sigsum^2\right) \vee 1\right)$ satisfies:
		\begin{equation}\label{ineq:upper-bound-hetero-PCA}
			\begin{split}
				\mathbb{E}\left\|\sin\Theta(\widehat{U}, U)\right\| \leq \frac{C}{\sqrt{n}}\left(\frac{\sigsum + r^{1/2}\sigmax}{\lambda_r^{1/2}(\Lambda)} + \frac{\sigsum \sigmax}{\lambda_r(\Lambda)}\right)\wedge 1.
			\end{split}
		\end{equation}
		Here, the constant $C$ relies on $c_I, C_0$, but not $X$, $\sigma_i, p, r, n$. 
	\end{Theorem}
	\begin{Remark}{\rm 
			Let $\widetilde{p} = \sigsum^2/\sigmax^2$. Then \eqref{ineq:upper-bound-hetero-PCA} can be rewritten as
			\begin{equation}
				\mathbb{E}\left\|\sin\Theta(\widehat{U}, U)\right\| \lesssim \left(\sqrt{\frac{\widetilde{p}\vee r}{n}}\frac{\sigmax}{\lambda_r^{1/2}(\Lambda)} + \sqrt{\frac{\widetilde{p}}{n}}\frac{ \sigmax^2}{\lambda_r(\Lambda)}\right)\wedge 1.
				\label{eq:heteropca}
			\end{equation}
			Consider the homoskedastic PCA setting where $\sigma_1^2 = \cdots = \sigma_p^2 = \sigmax^2$. This special case of Theorem \ref{th:heterogeneous-PCA} yields:
			\begin{equation}
				\mathbb{E}\left\|\sin\Theta(\widehat{U}, U)\right\| \lesssim \sqrt{\frac{p}{n}}\left(\frac{\sigmax}{\lambda_r^{1/2}(\Lambda)} + \frac{\sigmax^2}{\lambda_r(\Lambda)}\right)\wedge1.\
				\label{eq:regularpca}
			\end{equation}	
			Comparing \prettyref{eq:heteropca} 	with \prettyref{eq:regularpca}, we see that 
			a weighted average between $\widetilde{p}\vee r$ and $\widetilde{p}$ can be viewed as the ``effective dimension" for heteroskedastic PCA. 
		}
	\end{Remark}
	
	\begin{Remark}\label{rm:implication-vaswani}\rm
		Recently, \cite{vaswani2017finite-allerton,vaswani2020fast} studied the PCA for matrix data with non-isotropic and data-dependent noise. In our notation and with some mild regularity conditions,  \cite[Part 2, Corollary 2.7]{vaswani2020fast} shows that the regular SVD estimator $\widetilde{U} = \SVD_r(Y)$ satisfies
		\begin{equation}
			\|\sin\Theta(\widetilde{U}, U)\| \lesssim\left(\sqrt{\frac{p}{n}}\frac{\sigmax}{\lambda_{r}^{1/2}(\Lambda)} +  \sqrt{\frac{p}{n}}\frac{\sigmax^2}{\lambda_{r}(\Lambda)} + \frac{\|U_{\perp}^\top \Sigma_\varepsilon U\|}{\lambda_{\min}(\Lambda)}\right)\wedge 1
		\end{equation}
		with high probability. Here, $\Sigma_\varepsilon$ is the covariance matrix of the noise vector $\varepsilon_k$. Since $\sigsum \leq \sqrt{p} \sigmax$ and $\|U_{\perp}^\top \Sigma_{\varepsilon} U\|\geq 0$, our Theorem \ref{th:heterogeneous-PCA} yields a better estimation error rate.
	\end{Remark}
	
	Next, we establish the optimality of \prettyref{th:heterogeneous-PCA}. Consider the following class of generalized spiked covariance matrices:
	\begin{equation}\label{eq:hetero-PCA-class}
		\begin{split}
			& \mathcal{F}_{p, n, r}(\sigsumch, \sigmaxch, \nu, \kappa) = \Bigg\{\Sigma = U\Lambda U^\top + D:\\ 
			& \quad \begin{array}{l}
				D \text{ is non-negative diagonal}, \sum_iD_{ii} \leq \sigsumch^2, \max_i D_{ii} \leq \sigmaxch^2, \\
				U\in \mathbb{O}_{p, r},  I(U) \leq c_Ip/r , \|\Lambda\|/\lambda_r(\Lambda) \leq \kappa, \lambda_r(\Lambda) \geq \nu
			\end{array}\Bigg\}.
		\end{split}
	\end{equation}
	\begin{Theorem}[Heteroskedastic PCA: lower bound]\label{th:lower-bound-hetero-PCA}
		Suppose $\sqrt{p}\sigmaxch\geq\sigsumch\geq \sigmaxch>0$, $\kappa\geq 1$. There exists constant $C>0$, such that if $p\geq Cr$, we have
		\begin{equation}\label{eq:PCA-lower-bound}
			\inf_{\widehat{U}} \sup_{\Sigma\in \mathcal{F}_{p,n,r}(\sigsumch, \sigmaxch, \nu, \kappa)} \mathbb{E}\left\|\sin\Theta(\widehat{U}, U)\right\| \gtrsim \frac{1}{\sqrt{n}}\left(\frac{\sigsumch + r^{1/2}\sigmaxch}{\nu^{1/2}} + \frac{\sigsumch\sigmaxch}{\nu}\right)\wedge 1.
		\end{equation}
	\end{Theorem}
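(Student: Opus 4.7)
The plan is to prove the lower bound by treating its three additive pieces $T_1 := \sigsum/\sqrt{n\nu}$, $T_2 := r^{1/2}\sigmax/\sqrt{n\nu}$, and $T_3 := \sigsum\sigmax/(\sqrt{n}\,\nu)$ separately; each is established by exhibiting an adversarial Gaussian sub-family inside $\mathcal{F}_{p,n,r}(\sigsum,\sigmax,\nu)$ and applying Fano's inequality (or Le Cam's two-point method) on a local Grassmannian packing. Throughout I take $\Lambda = \nu I_r$ so that only the factor loading $U$ and the diagonal noise matrix $D$ vary, and use $n$ i.i.d.\ samples from the centered Gaussian model $N(0, \nu UU^\top + D)$.

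The core tool is the Gaussian KL bound $KL(N(0,\Sigma_1)\,\|\,N(0,\Sigma_2)) \lesssim \|\Sigma_2^{-1/2}(\Sigma_1-\Sigma_2)\Sigma_2^{-1/2}\|_F^2$, valid when the two covariance matrices are close. Writing $M := U_1U_1^\top - U_2U_2^\top$ and using the block eigenstructure of $\Sigma_i = \nu U_iU_i^\top + D$, a direct computation gives a per-sample KL bound of order $\nu^2\|M\|_F^2/[\sigma^2(\sigma^2+\nu)]$, where $\sigma^2$ is the effective noise level on the coordinates carrying the perturbation. Coupled with $\|M\|_F^2 \lesssim r\|\sin\Theta(U_1,U_2)\|^2$ and a Gilbert--Varshamov-style packing of $\mathbb{O}_{k,r}$ of log-cardinality $\gtrsim r(k-r)$ at scale $\epsilon$, Fano's inequality then converts this to a minimax lower bound of the form $\epsilon \gtrsim \sqrt{(k-r)/n}\cdot\sigma\sqrt{\sigma^2+\nu}/\nu$ on the sub-family.

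I then specialise this master bound to three noise configurations. (i) For $T_1$, take homoskedastic $D = (\sigsum^2/p)I_p$ (which lies in $\mathcal{F}_{p,n,r}$ by the assumption $\sqrt{p}\,\sigmax \ge \sigsum$), choose $k = p$, and work in the high-SNR regime; this reproduces the classical homoskedastic PCA lower bound $\sigsum/\sqrt{n\nu}$. (ii) For $T_2$, concentrate noise on one coordinate via $D = \sigmax^2 e_1 e_1^\top$, parametrize $U$ by its first row $a \in \mathbb{R}^r$ with the remaining rows fixed at a matrix of rank $r$, and reduce (after conditioning on the noiseless rows) to an $r$-dimensional Gaussian linear regression $Y_{k,1}\mid X_k = a^\top X_k + \sigmax \xi_k$ with design $X_k \sim N(0,\nu I_r)$; the minimax $\ell_2$ rate on $a$ is $\sqrt{r/n}\,\sigmax/\sqrt{\nu}$ and translates to the desired $\sin\Theta$ rate on $U$. (iii) For $T_3$, spread the noise budget over $k \asymp \sigsum^2/\sigmax^2$ coordinates at level $\sigmax^2$, restrict $U$ to their span, and work in the low-SNR regime $\nu \lesssim \sigmax^2$, in which the master bound yields $\sqrt{k/n}\,\sigmax^2/\nu \asymp \sigsum\sigmax/(\sqrt{n}\,\nu)$.

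The main technical obstacle will be the $T_3$ case, which uses the \emph{quadratic} (rather than linear) term in the Taylor expansion of the Gaussian KL; this demands a careful Woodbury expansion of $\Sigma_2^{-1/2}$ to verify that the linear-in-$M$ contributions to $\tr(\Sigma_2^{-1}\Sigma_1) - \log\det(\Sigma_2^{-1}\Sigma_1)$ cancel to the required order and that the $\nu^2/\sigmax^4$ leading coefficient is correctly identified. A secondary bookkeeping step is the corner regime $\sigsum^2/\sigmax^2 < r+1$, where the $T_3$ construction degenerates; there one checks $T_3 \lesssim T_2$ directly, so the $T_2$ bound already suffices. The $T_1$ and $T_2$ specializations reduce to well-known homoskedastic spiked-covariance and Gaussian linear regression minimax lower bounds once the noise profile is chosen, so their main role is verifying that the constructions stay inside $\mathcal{F}_{p,n,r}$ (which is where the assumption $\sqrt{p}\,\sigmax \ge \sigsum \ge \sigmax$ is used).
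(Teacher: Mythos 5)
Your overall architecture (adversarial Gaussian sub-families inside $\mathcal{F}_{p,n,r}$, packing plus Fano, exact or Taylor-expanded Gaussian KL of order $n\nu^2\|M\|_F^2/[\sigma^2(\sigma^2+\nu)]$) is sound and closely parallels the paper's proof; your $T_2$ argument via reduction to an $r$-dimensional Gaussian regression is a legitimate alternative to the paper's direct KL computation for the QR-perturbed family. However, there is a genuine gap in your handling of the $T_3$ term. Your construction spreads noise over $k\asymp \sigsum^2/\sigmax^2$ coordinates and packs rank-$r$ subspaces \emph{of their span}, which requires $k-r\gtrsim k$; moreover the incoherence constraint $\max_i\|e_i^\top U\|_2^2\leq c_I$ forces $k\geq r/c_I$, so the ``corner regime'' where your construction degenerates is $\sigsum^2/\sigmax^2\lesssim r$, not merely $<r+1$. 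In that regime your fallback claim ``$T_3\lesssim T_2$'' is false: $T_3/T_2=\sigsum/(\sqrt{r\nu})$, which in the corner regime is $\asymp \sigmax/\sqrt{\nu}$ and is unbounded when $\nu\ll\sigmax^2$. Concretely, take $\sigsum^2=r\sigmax^2$, $\nu=\sigmax^2/K$, and $n$ large enough that nothing saturates at $1$; then $T_3\asymp\sqrt{K}\,T_2\gg T_2$ and also $T_3\asymp\sqrt{K}\,T_1$, so neither your $T_1$ nor your $T_2$ construction certifies the claimed rate, and the theorem imposes no restriction excluding this regime.

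The fix — and the route the paper takes — is to abandon the full Grassmannian packing for $T_3$ and instead perturb only a \emph{single} column of $U$ on the noisy block: the paper sets $d=\lfloor\sigsum^2/(8\sigmax^2)\rfloor\vee 6$, supports the first column on $Ld$ coordinates (with $L\asymp 1/(dc_I)$ chosen so incoherence holds even when $d$ is a bounded constant), packs $\{\pm1\}^d$ by Varshamov--Gilbert to get log-cardinality $\asymp d$, and places the remaining $r-1$ columns on disjoint coordinates via an incoherent $Q$. Since $d\sigma_0^2\gtrsim\sigsum^2$ with $\sigma_0^2\asymp\sigmax^2$, Fano then yields $\theta\gtrsim\sqrt{d/n}\,(\sigma_0\sqrt{\nu}+\sigma_0^2)/\nu\gtrsim(\sigsum/\sqrt{n\nu}+\sigsum\sigmax/(\sqrt{n}\,\nu))\wedge 1$, i.e., your $T_1$ and $T_3$ simultaneously and uniformly over all parameter regimes, with no rank-$r$ packing needed on the noisy block. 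If you adopt this rank-one construction for $T_3$ (or verify it delivers $T_1$ as well, making your separate homoskedastic construction optional), the remaining steps of your plan — the $T_2$ regression reduction with its QR/incoherence bookkeeping, and the degenerate-covariance KL computed on the non-singular marginal — go through.
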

	
	\begin{Remark}{\rm 
			By combining Theorems \ref{th:heterogeneous-PCA} and \ref{th:lower-bound-hetero-PCA}, the proposed \prettyref{al:heteroPCA} achieves the following optimal estimation error rate in $\mathcal{F}_{p,n,r}(\sigsumch, \sigmaxch, \nu, \kappa)$ when the condition number $\kappa$ is a constant:
			\begin{equation*}
				\inf_{\widehat{U}} \sup_{\Sigma\in \mathcal{F}_{p,n,r}(\sigsumch, \sigmaxch, \nu, C)} \mathbb{E}\left\|\sin\Theta(\widehat{U}, U)\right\| \asymp \frac{1}{\sqrt{n}}\left(\frac{\sigsumch + r^{1/2}\sigmaxch}{\nu^{1/2}} + \frac{\sigsumch\sigmaxch}{\nu}\right)\wedge 1.
			\end{equation*}
		}
	\end{Remark}
	
	Next, we consider the performance of HeteroPCA if the covariance matrix $\Sigma_0$ is approximately low-rank.
	\begin{Proposition}[HeteroPCA for approximately low-rank covariance]\label{pr:heteroPCA-approximate}
		Consider the generalized spiked covariance model \eqref{eq:spikecov}. Suppose $\Sigma_0 = \widetilde{U}\Lambda \widetilde{U}^\top$ is the eigenvalue decomposition, where $\widetilde{U} = [U ~ U_\perp]$ and $U$ is the collection of leading $r$ singular vectors. Assume $X$ and $\varepsilon_i$ are $\Sigma$-sub-Gaussian and $\sigma^2_i$-sub-Gaussian, respectively. 
		Also assume that $n\geq Cr$,
		$n\wedge p \geq C\left(\sigsum^2/\sigma_r(\Lambda)\right)$, 
		and $\|\Lambda\|/\lambda_r(\Lambda)\leq C$ for some constant $C>0$.
		Then there exists some constant $c_I>0$ such that if the incoherence constant $I(U)$ (defined in \eqref{eq:incoherence}) satisfies $I(U)\leq c_Ip/r$, then the output $\widehat U$ of Algorithm \ref{al:heteroPCA} with the input matrix $\widehat{\Sigma}$ and number of iterations $T = \Omega\left(\log \left(n\lambda_r(\Lambda)/\sigsum^2\right) \vee 1\right)$ satisfies
		\begin{equation*}
			\begin{split}
				& \mathbb{E}\left\|\sin\Theta(\widehat{U}, U)\right\| \\
				\lesssim & \left(\frac{\sigsum+\sqrt{r}\sigmax}{n^{1/2}\lambda_r^{1/2}(\Lambda)} + \frac{\sigsum\sigmax}{n^{1/2}\lambda_r(\Lambda)} + \frac{((np)^{1/2}+p)\lambda_{r+1}^{1/2}(\Lambda)}{n\lambda_r^{1/2}(\Lambda)} + \frac{\lambda_{r+1}(\Lambda)}{\lambda_r(\Lambda)}\right)\wedge 1.
			\end{split}
		\end{equation*}
	\end{Proposition}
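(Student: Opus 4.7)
The plan is to reduce the approximately low-rank setting to the exact rank-$r$ setting of Theorem~\ref{th:heterogeneous-PCA} by treating the tail eigen-directions of $\Sigma_0$ as additional structured noise. Decompose orthogonally $X_k = UU^\top X_k + U_\perp U_\perp^\top X_k =: X_k^{(1)} + X_k^{(2)}$ and rewrite $Y_k = X_k^{(1)} + \tilde\varepsilon_k$ with augmented noise $\tilde\varepsilon_k := X_k^{(2)} + \varepsilon_k$. The top component $X_k^{(1)}$ lies in the range of $U$ with covariance $U\Lambda_r U^\top$ (where $\Lambda_r$ holds the top $r$ eigenvalues of $\Lambda$), while $\tilde\varepsilon_k$ has covariance $U_\perp \Lambda_\perp U_\perp^\top + \diag(\sigma_1^2,\ldots,\sigma_p^2)$. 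This covariance is no longer diagonal, but its operator norm is at most $\lambda_{r+1}(\Lambda) + \sigmax^2$.

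Next, apply the deterministic robust perturbation bound (Theorem~\ref{th:diagonal-less}) with target $U\Lambda_r U^\top$ and input $\hat\Sigma$. Under the incoherence hypothesis $I(U) \leq c_I p/r$, this controls $\|\sin\Theta(\hat U, U)\|$ in terms of $\|\Delta(\hat\Sigma - U\Lambda_r U^\top)\|/\lambda_r(\Lambda)$. Split the off-diagonal perturbation as
\[
\Delta(\hat\Sigma - U\Lambda_r U^\top) \;=\; \Delta(U_\perp \Lambda_\perp U_\perp^\top) \;+\; \Delta(\hat\Sigma - \mathbb{E}\hat\Sigma),
\]
separating a deterministic tail bias from a stochastic fluctuation. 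The bias satisfies $\|\Delta(U_\perp \Lambda_\perp U_\perp^\top)\| \leq \|U_\perp \Lambda_\perp U_\perp^\top\| = \lambda_{r+1}(\Lambda)$, and dividing by $\lambda_r(\Lambda)$ yields the last term $\lambda_{r+1}(\Lambda)/\lambda_r(\Lambda)$ of the claimed bound.

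For the stochastic fluctuation, expand the products $Y_k Y_k^\top$ via $Y_k = X_k^{(1)} + X_k^{(2)} + \varepsilon_k$ into their bilinear blocks. The blocks involving only $X_k^{(1)}$ and $\varepsilon_k$ are treated verbatim as in Theorem~\ref{th:heterogeneous-PCA} and reproduce the first two terms of the bound. The new pieces involve $X_k^{(2)}$: the pure-tail quadratic $\frac{1}{n}\sum_k X_k^{(2)}(X_k^{(2)})^\top - U_\perp \Lambda_\perp U_\perp^\top$ is bounded by sub-Gaussian sample-covariance concentration at rate $(\sqrt{p/n}+p/n)\lambda_{r+1}(\Lambda)$, and the signal--tail cross term $\frac{1}{n}\sum_k X_k^{(1)}(X_k^{(2)})^\top$ (mean zero since $U^\top U_\perp = 0$) is bounded by matrix Bernstein at rate $\sqrt{\lambda_r(\Lambda)\lambda_{r+1}(\Lambda)\,p/n}$ with an $\sqrt{rp\lambda_r(\Lambda)\lambda_{r+1}(\Lambda)}/n$ sub-exponential remainder; the tail--$\varepsilon$ cross term is handled analogously and is absorbed. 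After dividing by $\lambda_r(\Lambda)$, the cross-term pieces dominate and yield the third term $((np)^{1/2}+p)\lambda_{r+1}^{1/2}(\Lambda)/(n\lambda_r^{1/2}(\Lambda))$, whose $\lambda_{r+1}^{1/2}$ scaling is the hallmark of the signal-tail interaction (as opposed to the quadratic tail-tail piece, which scales like $\lambda_{r+1}$ and is subdominant). The main obstacle is the bookkeeping needed to invoke Theorem~\ref{th:diagonal-less}: its quantitative hypotheses demand that the off-diagonal perturbation stay below a threshold proportional to $\lambda_r(\Lambda)$ and that the iterative diagonal update of Algorithm~\ref{al:heteroPCA} remain stable throughout; re-verifying each concentration step with the now-nondiagonal $\tilde\varepsilon_k$ covariance is where the additional hypothesis $\sigsum^2/\lambda_r(\Lambda) \geq \exp(-Cn)+\exp(-Cp)$ becomes necessary, as one must handle $p$-dimensional tail events in addition to $n$-dimensional ones.
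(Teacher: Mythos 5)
Your overall architecture (split $X_k$ into its top-$r$ and tail components, reuse the noise bounds from Theorem~\ref{th:heterogeneous-PCA}, and control the new tail--tail and signal--tail blocks) matches the paper's proof, and your accounting of which blocks produce which terms of the final bound is essentially right. However, there is one genuine gap: you invoke Theorem~\ref{th:diagonal-less} with the \emph{population} reference matrix $M = U\Lambda_r U^\top$, so your perturbation $Z = \hat\Sigma - U\Lambda_r U^\top$ necessarily contains the off-diagonal part of the signal sampling fluctuation $\tfrac{1}{n}\sum_k X_k^{(1)}X_k^{(1)\top} - U\Lambda_r U^\top$ (hidden inside your $\Delta(\hat\Sigma-\mathbb{E}\hat\Sigma)$ term). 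This block is \emph{not} "treated verbatim as in Theorem~\ref{th:heterogeneous-PCA}": in that proof the signal--signal quadratic sits inside the reference matrix $\hat\Sigma_X$ and is never bounded as a perturbation. Its operator norm is of order $\lambda_1(\Lambda)(\sqrt{r/n}+r/n)\asymp \lambda_r(\Lambda)\sqrt{r/n}$, so after dividing by $\lambda_r(\Lambda)$ your argument produces an extra additive $\sqrt{r/n}$ in the $\sin\Theta$ bound. This term is not dominated by any term in the stated conclusion (take $\sigma_{\max}$ and $\lambda_{r+1}(\Lambda)$ exponentially small: the claimed bound is then exponentially small while $\sqrt{r/n}$ is polynomial), so the proof as written does not establish the proposition.

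The fix is exactly the paper's choice of reference: take $M = (n-1)\hat\Sigma_{X^{(1)}}$, the \emph{sample} covariance of the top signal part $X^{(1)} = U\Lambda_1^{1/2}\Gamma_1$. This matrix is exactly rank $r$ with column space equal to that of $U$ (with probability $1-Ce^{-cn}$ once $n\geq Cr$, via $\lambda_r(\Gamma_1)\gtrsim\sqrt{n}$), so the signal fluctuation never enters $Z$; one only needs the lower bound $\lambda_r((n-1)\hat\Sigma_{X^{(1)}})\gtrsim n\lambda_r(\Lambda)$, which is Step~2 of the proof of Theorem~\ref{th:heterogeneous-PCA}. With that substitution your remaining estimates go through: your centered tail quadratic and cross-term rates are consistent with (in fact slightly sharper than) what the paper obtains, where the signal--tail cross term $\|X^{(1)}X^{(2)\top}\|\lesssim(\sqrt{np}+p)\lambda_r^{1/2}(\Lambda)\lambda_{r+1}^{1/2}(\Lambda)$ is derived by a polarization identity applied to $[\Gamma_1;\Gamma_2]$ and $[\Gamma_1;-\Gamma_2]$ rather than by matrix Bernstein; either route delivers the third term of the bound.
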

	Proposition \ref{pr:heteroPCA-approximate} shows that HeteroPCA can estimate the loading matrix $U$ accurately if there exists a significant gap between $\lambda_r(\Sigma_0)$ and $\lambda_{r+1}(\Sigma_0)$.
	
	\subsection{A Deterministic Robust Perturbation Analysis}\label{sec:deterministic}
	
	In this section, we temporarily ignore the randomness of $X_k$s and $\varepsilon_k$s and focus on a more general prototypical model of the heteroskedastic PCA problem in \prettyref{sec:intro-HPCA}. Let $N, M, Z$ be deterministic symmetric matrices (not necessarily positive definite) that satisfy
	\begin{equation}\label{eq:model-1}
		N = M + Z \in \mathbb{R}^{p\times p}.
	\end{equation}
	Here $N$ is the observation, $M$ is the rank-$r$ matrix of interest, and $Z\in \mathbb{R}^{p\times p}$ is the perturbation that possibly has significantly large amplitude in its diagonal entries. In the heteroskedastic PCA model, $N, M, Z$ may represent the sample covariance matrix $\widehat{\Sigma}$, population covariance matrix $\Sigma_0$, and their difference, respectively. 
	Let $U\in \mathbb{O}_{p, r}$ be the first $r$ singular vectors of $M$. As discussed earlier, applying the proposed HeteroPCA (Algorithm \ref{al:heteroPCA}) to matrix $N$ provides an adaptive estimate of $U$. In the following theorem, we demonstrate the theoretical property for the proposed Algorithm \ref{al:heteroPCA} under the general robust perturbation model \eqref{eq:model-1}.

	\begin{Theorem}[Robust $\sin\Theta$ theorem]\label{th:diagonal-less}
		Suppose $M\in \mathbb{R}^{p\times p}$ is a rank-$r$ symmetric matrix and $U\in \mathbb{O}_{p, r}$ consists of the eigenvectors of $M$. Let $\widehat{U}^{(t)} = [u_1^{(t)} \cdots u_r^{(t)}]$ be the intermediate result of Algorithm \ref{al:heteroPCA} with input matrix $N$ after $t$ iterations. There exists a universal constant $c_I>0$ such that if 
		\begin{equation}\label{ineq:incoherence-constant-dl-d-k}
			I(U) \|M\|/\lambda_r(M) \leq c_I p/r,
		\end{equation}
		where $I(U)$ is the incoherence constant defined in \prettyref{eq:incoherence}, then
		$$	\left\|\sin\Theta(\widehat{U}^{(t)}, U)\right\| \leq \frac{4\|\Delta(Z)\| }{\lambda_r(M)} + 2^{-(t+3)}.$$ 
		In particular if $T = \Omega(\log\frac{\lambda_r(M)}{\eta\|\Delta(Z)\|}\vee 1)$, the final outcome $\widehat{U}$ satisfies
		\begin{equation}\label{ineq:dl-d-k-upper-bound}
			\left\|\sin\Theta(\widehat{U}, U)\right\|\lesssim \frac{\|\Delta(Z)\|}{\lambda_r(M)}\wedge 1.
		\end{equation}
	\end{Theorem}
	A matching lower bound and several discussions to the robust $\sin\Theta$ theorem is given in Section \ref{sec:addition-sin-theta} in the supplementary materials.
	\begin{Remark}\rm 
		Distinct from the matrix completion, where most entries are missing from the target matrix, the substantial corrupted entries only lie in the diagonal of the target Gram matrix/sample covariance matrix in our problem. Thus, a much looser condition on incoherence, $I(U) < cp/r$, is sufficient compared to the one required by matrix completion, $I(U)<\mu$ with $\mu$ being a constant.
	\end{Remark}

	\subsection{Proof Sketches of Main Technical Results}\label{sec:proof-sketch}
	
	We briefly discuss the proofs of Theorems \ref{th:heterogeneous-PCA}, \ref{th:lower-bound-hetero-PCA}, and \ref{th:diagonal-less} in this section. 
	
	The proof of Theorem \ref{th:heterogeneous-PCA} consists of three main steps. First, we define $\widehat{\Sigma}_X$ as the sample covariance matrix of signal vectors $X_1,\ldots, X_n$ and $E = [\varepsilon_1 \cdots \varepsilon_n]$ as the noise matrix. We aim to develop a concentration inequality for $\Delta\left((n-1)(\widehat{\Sigma} - \widehat{\Sigma}_X)\right)$, i.e., the off-diagonal part of the perturbation. To this end, we decompose $(n-1)(\widehat{\Sigma} - \widehat{\Sigma}_X)$ into $(XE^\top +EX^\top), (EE^\top), n(\bar{X}\bar{E}^\top + \bar{E}\bar{X}^\top + \bar{E}\bar{E}^\top)$, 
	then bound them separately by heteroskedastic Wishart concentration inequality \citep{cai2020non} and Lemma \ref{lm:orthogonal-projection} in the supplementary materials. Second, we develop a lower bound for  $\lambda_r(\widehat{\Sigma}_X)$, i.e., the least non-trivial singular value of the signal covariance matrix. Finally, we apply the robust $\sin\Theta$ theorem (Theorem \ref{th:diagonal-less}), to complete the proof.

	To show the lower bound in Theorem \ref{th:lower-bound-hetero-PCA}, it suffices to show the two terms in \eqref{eq:PCA-lower-bound} separately; c.f.,~\eqref{ineq:PCA-lower-1} and \eqref{ineq:PCA-lower-2} in the detailed proof. To show each individual lower bound, we construct a series of ``candidate matrices" $\{U^{(k)}, \Sigma^{(k)}\}_{k=1}^N$ in $\mathcal{F}_{p, n, r}(\sigsum, \sigmax, \nu)$ so that $\{U^{(k)}\}_{k=1}^N$ are well-separated while distinguishing them apart based on random sample $Y_1,\ldots, Y_n \sim N(0, \Sigma^{(k)})$ is impossible. This implies the desired lower bound by applying Fano's method.

	The proof of Theorem \ref{th:diagonal-less} is the main technical contribution of this paper. Specifically, we analyze how the estimation error $K_t = \|N^{(t)} - M\|$ decays at each iteration. We first obtain an initialization error bound. Then for each $t$, we decompose $K_t$ into four terms, bound them separately, and obtain an inequality that relates $K_t$ to $K_{t-1}$ (see \eqref{eq:Ktt}). By induction, this recursive inequality leads to the exponential decay of $K_t$ and implies the desired upper bound. Note that Algorithm \ref{al:heteroPCA} can be viewed as successive compositions involving the projection operator $P_U(\cdot)$ and the diagonal-deletion operator $D(\cdot)$. We thus introduce Lemma \ref{lm:diagonal-projection-condense} to give sharp operator norm upper bounds for compositions of $P_U(\cdot)$ and $D(\cdot)$. At the heart of the proof of Theorem \ref{th:diagonal-less}, this lemma is useful for bounding the error at both the initialization and the subsequent iterations.

	\section{Further Applications in High-dimensional Statistics}\label{sec:applications}

	\subsection{SVD under Heteroskedastic Noise}\label{sec:heteroskedastic perturbation}
	
	Suppose one observes 
	\begin{equation}
		Y = X + E,
		\label{eq:denoising}
	\end{equation}
	where $X$ is the low-rank matrix of interest and the entries of noise $E$ are independent, zero-mean, but not necessarily homoskedastic. The goal is to recover the left singular subspace of $X$ based on noisy observation $Y$. The problem arises naturally in a range of applications, such as magnetic resonance imaging (MRI) and relaxometry \citep{candes2013unbiased}. This model can also be viewed as a prototype of various problems in high-dimensional statistics and machine learning, including Poisson PCA \citep{salmon2014poisson}, bipartite stochastic block model \citep{florescu2016spectral}, and  exponential family PCA \citep{liu2018pca}. Let the sample and population Gram matrices be $N = YY^\top$ and $M = XX^\top$, respectively. Then,
	\begin{equation*}
		\begin{split}
			& \left(\mathbb{E} N\right)_{ij} = \left\{\begin{array}{ll}
				M_{ij}, & i \neq j;\\
				M_{ij} + \sum_{k=1}^{p_2}\Var(E_{ik}), & i = j.
			\end{array}\right.\\
		\end{split}
	\end{equation*}
	Thus, only the off-diagonal entries of $N$ are unbiased estimators of the corresponding entries of $M$. When $\Var(E_{ij})$ are unequal, there can be significant differences between the spectrum of $\mathbb{E}N, \mathbb{E}\Delta(N)$, and $M$.  
	Since left singular vectors of $Y$ and $X$ are respectively identical to those of $N$ and $M$, the regular SVD or diagonal-deletion SVD on $Y$ can result in inconsistent estimates of the left singular subspace of $X$. 
	
	Compared to the regular or diagonal-deletion SVD, the next theorem shows the proposed HeteroPCA can be a better approach. 
	\begin{Theorem}
		\label{th:upper_bound_SVD}
		Consider the model \eqref{eq:denoising}. Suppose $X\in \mathbb{R}^{p_1\times p_2}$ is a fixed rank-$r$ matrix, the noise matrix $E$ has independent entries, $\mathbb{E}E_{ij} = 0$, $\Var(E_{ij}) =\sigma_{ij}^2$, and $E_{ji}$ is $\sigma_{ij}^2$-sub-Gaussian. Suppose the left singular subspace of $X$ is $U\in \mathbb{O}_{p_1, r}$. Assume that the condition number of $X$ is at most some absolute constant $C$, i.e.,	$\|X\|\leq C\lambda_r(X)$. Denote 
		\begin{equation}
			\sigma_R^2 = \max_i\sum_{j=1}^{p_2}\sigma_{ij}^2, \quad \sigma_C^2 = \max_j\sum_{i=1}^{p_1}\sigma_{ij}^2,\quad \sigmax^2 = \max_{ij} \sigma_{ij}^2
		\end{equation}
		as the rowwise, columnwise, and entrywise noise variances. Then there exists a constant $c_I>0$ such that if $U$ satisfies $I(U) = \max_{1\leq i \leq p_1} \frac{p_1}{r}\|e_i^\top U\|_2^2 \leq c_I p_1/r$,
		Algorithm \ref{al:heteroPCA} applied to $YY^\top$ with rank $r$ and number of iterations $T = \Omega\left(\log(\lambda_r(X)/\sigma_C)\vee1\right)$
		outputs $\widehat U$ that satisfies 
		\begin{equation}\label{ineq:upper_bound}
			\begin{split}
				& \mathbb{E}\left\|\sin\Theta(\widehat{U}, U)\right\| \\
				\lesssim &  \left(\frac{\sigma_C+\sqrt{r}\sigmax}{\lambda_r(X)}+\frac{\sigma_R\sigma_C + \sigma_R\sigmax \sqrt{\log (p_1\wedge p_2)} + \sigmax^2\log(p_1\wedge p_2)}{\lambda_r^2(X)}\right)\wedge 1.\\
			\end{split}
		\end{equation}
		If $\sigmax \lesssim \sigma_C/\max\{\sqrt{r}, \sqrt{\log (p_1\wedge p_2)}\}$ additionally holds, i.e., the variance array $\{\sigma_{ij}^2\}$ is not too ``spiky," we further have
		\begin{equation}\label{ineq:upper_bound_simpl1}
			\mathbb{E}\left\|\sin\Theta(\widehat{U}, U)\right\| \lesssim \left(\frac{\sigma_C}{\lambda_r(X)} + \frac{\sigma_R\sigma_C}{\lambda_r^2(X)}\right)\wedge 1.
		\end{equation}
	\end{Theorem}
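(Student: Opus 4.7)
The plan is to reduce heteroskedastic low-rank denoising to an application of the deterministic robust $\sin\Theta$ theorem of Section \ref{sec:deterministic}, and then to control the stochastic off-diagonal perturbation via heteroskedastic matrix concentration. Set $N = YY^\top$ and $M = XX^\top$. Then $M$ is rank-$r$ symmetric positive semidefinite, its leading eigenvectors are precisely the left singular vectors $U$ of $X$, and $\lambda_r(M) = \lambda_r^2(X)$. The bounded condition number assumption $\|X\|\leq C\lambda_r(X)$ gives $\|M\|/\lambda_r(M)\leq C^2$, and the incoherence bound $I(U)\leq c_I p_1/r$ holds by assumption, so the hypothesis \eqref{ineq:incoherence-constant-dl-d-k} of Theorem \ref{th:diagonal-less} is met (with $\hat\Sigma$ replaced by $N$). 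Invoking Theorem \ref{th:diagonal-less} yields the deterministic reduction
$$\bigl\|\sin\Theta(\hat U,U)\bigr\|\;\lesssim\;\frac{\|\Delta(N-M)\|}{\lambda_r^2(X)}\wedge 1,$$
so it suffices to control the off-diagonal perturbation $\|\Delta(N-M)\|$ in expectation.

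Expanding $N - M = XE^\top + EX^\top + EE^\top$, I would bound the cross term and the quadratic term separately. For the cross term, write an SVD $X = U\Lambda V^\top$; then $XE^\top = U\Lambda(EV)^\top$, so $\|XE^\top\|\leq \|X\|\cdot\|EV\|$. Since $EV\in\mathbb{R}^{p_1\times r}$ has independent sub-Gaussian entries whose $k$-th column has expected squared norm at most $\sigma_C^2$ and whose entrywise sub-Gaussian parameter is $\lesssim\sigmax$, a standard $\varepsilon$-net argument on the unit sphere of $\mathbb{R}^r$ combined with sub-Gaussian tail bounds gives $\|EV\|\lesssim \sigma_C + \sqrt{r}\,\sigmax$ with high probability. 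Combining with $\|X\|\asymp\lambda_r(X)$ and dividing by $\lambda_r^2(X)$ produces the first term in \eqref{ineq:upper_bound}; the bound on $\|EX^\top\|$ follows by symmetry.

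For the quadratic term, I would invoke the heteroskedastic Wishart concentration inequality of \cite{cai2018heteroskedastic} applied to the off-diagonal part $\Delta(EE^\top)$, which gives
$$\bigl\|\Delta(EE^\top)\bigr\|\;\lesssim\;\sigma_R\sigma_C + \sigma_R\sigmax\sqrt{\log(p_1\wedge p_2)} + \sigmax^2\log(p_1\wedge p_2)$$
with probability at least $1-(p_1\wedge p_2)^{-c}$. Dividing by $\lambda_r^2(X)$ yields the second term of \eqref{ineq:upper_bound}. Putting the two tail bounds together by a union bound, and using the trivial bound $\|\sin\Theta\|\leq 1$ on the low-probability failure event to convert the high-probability bound to an expected-error bound, gives \eqref{ineq:upper_bound}. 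The simplified form \eqref{ineq:upper_bound_simpl1} is immediate: the ``non-spiky'' hypothesis $\sigmax\lesssim\sigma_C/\max\{\sqrt r,\sqrt{\log(p_1\wedge p_2)}\}$ absorbs the $\sqrt r\,\sigmax$, $\sigma_R\sigmax\sqrt{\log(\cdot)}$, and $\sigmax^2\log(\cdot)$ contributions into the leading $\sigma_C$ and $\sigma_R\sigma_C$ terms.

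The main technical obstacle is the off-diagonal Wishart concentration for $\|\Delta(EE^\top)\|$: the diagonal of $EE^\top$ carries a large, row-dependent bias, so classical Wishart concentration cannot be applied directly, and the heterogeneous entry variances preclude a clean reduction to isotropic Gaussian matrices. This is precisely the setting handled by \cite{cai2018heteroskedastic}, and once that concentration inequality is available the remaining argument is the clean application of Theorem \ref{th:diagonal-less} together with a straightforward matrix concentration bound on $\|EV\|$, as sketched above.
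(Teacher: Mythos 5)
Your proposal is correct and follows essentially the same route as the paper: decompose $YY^\top - XX^\top = XE^\top + EX^\top + EE^\top$, bound the cross terms via an $\varepsilon$-net bound on $\|EV\|$ (the paper's Lemma \ref{lm:orthogonal-projection}), bound the quadratic term by the heteroskedastic Wishart concentration of \cite{cai2018heteroskedastic} after removing the diagonal bias $\diag(\delta)=\mathbb{E}EE^\top$, and conclude with the robust $\sin\Theta$ theorem (Theorem \ref{th:diagonal-less}) applied to $N=YY^\top$, $M=XX^\top$ with $\lambda_r(M)=\lambda_r^2(X)$. The only cosmetic differences are that the paper works directly in expectation rather than converting a high-probability bound, and it passes from $\|EE^\top-\diag(\delta)\|$ to $\|\Delta(EE^\top)\|$ via the factor-of-two bound of Lemma \ref{lm:diagonal-less-spectral-norm}; neither affects correctness.
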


	\begin{Remark}{\rm
			When $\sigma_{ij} = \sigmax$ for all $i, j$, \eqref{ineq:upper_bound} reduces to
			\begin{equation*}
				\mathbb{E}\left\|\sin\Theta(\widehat{U}, U)\right\| \lesssim \left(\frac{\sqrt{p_1}\sigmax}{\lambda_r(X)} + \frac{\sqrt{p_1p_2}\sigmax}{\lambda_r^2(X)}\right),
			\end{equation*}
			which matches the optimal rate for SVD under homoskedastic noise in the literature \citep[Theorems 3 and 4]{cai2016minimax}.
	}\end{Remark}
	\begin{Remark}{\rm
			In contrast to the scaling of $\lambda_r(\Lambda)$ in Theorems 1 and 2, the scale of $\lambda_r(X)$ in Theorem \ref{th:upper_bound_SVD} implicitly grows with both $p_1$ and $p_2$ as $X$ is a $p_1$-by-$p_2$ matrix.
		}
	\end{Remark}
	
	\subsection{Poisson PCA}\label{sec:Poisson PCA}

	Poisson PCA \citep{liu2018pca} is an important problem in statistics and engineering with a range of applications, including photon-limited imaging \citep{salmon2014poisson} and biological sequencing data analysis \citep{cao2020multisample}. Suppose we observe $Y\in \mathbb{R}^{p_1\times p_2}$, where $Y_{ij}\overset{ind}{\sim} {\rm Poisson}(X_{ij})$ and $X\in \mathbb{R}^{p_1\times p_2}$ is rank-$r$. Let $X = U\Lambda V^\top$ be the singular value decomposition, where $U\in \mathbb{O}_{p_1, r}, V\in \mathbb{O}_{p_2, r}$. The goal is to estimate the leading singular vectors of $X$, i.e., $U$ or $V$, based on $Y$. HeteroPCA is an appropriate method for Poisson PCA since it can well handle the heteroskedasticity of Poisson distribution. Although the aforementioned heteroskedastic low-rank matrix denoising can be seen as a prototype problem of Poisson PCA, Theorem \ref{th:upper_bound_SVD} is not directly applicable and more careful analysis is needed since the Poisson distribution has heavier tail than sub-Gaussian.
	\begin{Theorem}[Poisson PCA]\label{th:poisson}
		Suppose $X$ is a nonnegative $p_1$-by-$p_2$ matrix, $\rank(X)= r$, $\lambda_1(X)/\lambda_r(X) \leq C$, $X_{ij}\geq c$ for constant $c>0$, $U\in \mathbb{O}_{p_1, r}$ is the left singular subspace of $X$. Denote
		\begin{equation}
			\sigma^2_R = \max_{i} \sum_{j=1}^{p_2} X_{ij}, \quad  \sigma^2_C = \max_j \sum_{i=1}^{p_1} X_{ij}, \quad \sigma^2_\ast = \max_{i,j} X_{ij}.
		\end{equation}
		Suppose one observes $Y\in \mathbb{R}^{p_1\times p_2}, Y_{ij} \overset{ind}{\sim} {\rm Poisson}(X_{ij})$. Then there exists constant $c_I>0$ such that if $U$ satisfies $I(U) = \max_i \frac{p_1}{r}\|e_i^\top U\|_2^2 \leq c_I p_1/r$,
		the proposed HeteroPCA procedure (Algorithm \ref{al:heteroPCA}) on matrix $YY^\top$ with rank $r$ and number of iterations $T = \Omega\left(\log(\lambda_r(X)/\sigma_C)\vee1\right)$ yields
		\begin{equation}
			\begin{split}
				& \mathbb{E}\|\sin\Theta(\widehat{U}, U)\|\\ 
				\lesssim &  \left(\frac{\sigma_C+r\sigmax}{\lambda_r(X)} + \frac{\left\{\sigma_R+\sigma_C + \sigmax\sqrt{\log(p_2)\log(p_1)}\right\}^2-\sigma_R^2}{\lambda_r^2(X)}\right)\wedge 1.\\
			\end{split}
		\end{equation}
		In addition, if $\sigmax \leq \sigma_C/\max\{r, \sqrt{\log (p_1)\log(p_2)}\}$, then
		\begin{equation*}
			\mathbb{E}\left\|\sin\Theta(\widehat{U}, U)\right\| \lesssim \left(\frac{\sigma_C}{\lambda_r(X)} + \frac{\sigma_R\sigma_C}{\lambda_r^2(X)}\right)\wedge 1.
		\end{equation*}
	\end{Theorem}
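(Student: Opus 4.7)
The approach parallels the proof of Theorem~\ref{th:upper_bound_SVD} on heteroskedastic low-rank matrix denoising, with the central new ingredient being concentration inequalities tailored to Poisson (rather than sub-Gaussian) noise. Set $E=Y-X$, so that $E$ has independent centered entries with $\Var(E_{ij})=X_{ij}$ and Poisson upper tails. Take $N=YY^\top$ and $M=XX^\top$. Since $U$ is the leading $r$-eigenvector matrix of $M$, $\lambda_r(M)=\lambda_r^2(X)$, $\|M\|/\lambda_r(M)\leq C^2$, and $I(U)\leq c_I p_1/r$ by hypothesis, Theorem~\ref{th:diagonal-less} reduces the task to a high-probability bound on $\|\Delta(Z)\|/\lambda_r^2(X)$, where
\[
Z \;=\; N-M \;=\; XE^\top + EX^\top + EE^\top.
\]
I would then integrate the resulting tail against the trivial $\|\sin\Theta\|\leq 1$ on the exceptional event to produce the bound in expectation, exactly as in the proof of Theorem~\ref{th:upper_bound_SVD}.

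For the cross term $XE^\top$, I would write it as $\sum_{k=1}^{p_2}X_{\cdot k}E_{\cdot k}^\top$, a sum of $p_2$ independent rank-one mean-zero random matrices, and apply matrix Bernstein. A routine computation shows the two matrix-variance proxies are $\|X\|^2\sigma_C^2$ and $\|X\|^2\sigma_R^2$, while truncation of Poisson tails at level $O(\sigma_\ast\sqrt{\log(p_1p_2)})$ gives an Orlicz-type per-summand bound contributing the $r\sigma_\ast$ term; with $\|X\|\asymp \lambda_r(X)$ from the bounded condition number hypothesis, this yields $\|XE^\top\|\lesssim \lambda_r(X)(\sigma_C+r\sigma_\ast)$ with high probability, accounting for the first summand of the theorem after division by $\lambda_r^2(X)$. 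For the quadratic term, the fact that rows of $E$ have pairwise-independent entries makes $\mathbb{E}[EE^\top]$ diagonal and yields the cancellation $\mathbb{E}\Delta(EE^\top)=0$; writing $\Delta(EE^\top)=\sum_k\Delta(E_{\cdot k}E_{\cdot k}^\top)$ as a sum of $p_2$ independent centered rank-at-most-$2$ matrices and applying matrix Bernstein a second time with the same truncation delivers
\[
\|\Delta(EE^\top)\|\;\lesssim\;\bigl(\sigma_R+\sigma_C+\sigma_\ast\sqrt{\log p_1\log p_2}\bigr)^2-\sigma_R^2
\]
with high probability; the subtracted $\sigma_R^2$ reflects the exact cancellation of the diagonal bulk $\mathbb{E}[D(EE^\top)]$, which is what makes the centered off-diagonal piece substantially smaller than $\|E\|^2$. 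Dividing by $\lambda_r^2(X)$ produces the second summand.

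The main obstacle is the Poisson-specific matrix concentration just sketched. Unlike the sub-Gaussian setting of Theorem~\ref{th:upper_bound_SVD}, Poisson tails are only sub-exponential, the entry-wise variances $X_{ij}$ are heterogeneous, and individual entries may be arbitrarily large, so a direct matrix Bernstein for bounded matrices fails. The plan is a truncation-and-union-bound argument: truncate each $E_{ij}$ at its Poisson-tail quantile $O(\sigma_\ast\sqrt{\log(p_1p_2)})$, apply matrix Bernstein to the truncated random matrices to obtain the variance-dominated contributions $\sigma_R+\sigma_C$, and absorb the excess above truncation into the $\sigma_\ast\sqrt{\log p_1\log p_2}$ correction via a union bound across the $p_1p_2$ entries. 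The hypothesis $X_{ij}\geq c$ is used to ensure that the variance parameter remains meaningful relative to the truncation level, so sub-exponential concentration can be invoked cleanly. Once the above two concentration bounds are established, the simplified form under $\sigma_\ast\lesssim \sigma_C/\max\{r,\sqrt{\log p_1\log p_2}\}$ follows by elementary algebra, exactly as in Theorem~\ref{th:upper_bound_SVD}.
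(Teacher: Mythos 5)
Your high-level reduction is the same as the paper's: set $E=Y-X$, establish that $E_{ij}/\sqrt{X_{ij}}$ is uniformly sub-exponential via Poisson tail bounds (this is exactly where $X_{ij}\geq c$ enters), decompose $YY^\top-XX^\top$ into $XE^\top+EX^\top+EE^\top$, note that $\mathbb{E}EE^\top$ is diagonal so only the centered part survives after $\Delta(\cdot)$, and feed the resulting bound into the robust $\sin\Theta$ theorem. Your observation about the cancellation behind the ``$-\sigma_R^2$'' is also in the right spirit.

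The genuine gap is in the concentration machinery. Truncated matrix Bernstein applied to $\sum_k X_{\cdot k}E_{\cdot k}^\top$ (or to $\sum_k \Delta(E_{\cdot k}E_{\cdot k}^\top)$) necessarily pays a $\sqrt{\log p_1}$ factor on the variance-proxy term: the deviation bound for a sum of independent $p_1\times p_1$ random matrices is of order $\sqrt{v\log p_1}+L\log p_1$, so your cross-term estimate comes out as $\|X\|\sigma_C\sqrt{\log p_1}+\cdots$ and your Gram-term estimate as $\sigma_R\sigma_C\sqrt{\log p_1}+\cdots$, whereas the theorem's leading terms $\sigma_C/\lambda_r(X)$ and $\sigma_R\sigma_C/\lambda_r^2(X)$ carry no logarithm. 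The paper avoids this in two places. For the cross term it writes $\|XE^\top\|\leq\|X\|\,\|EV\|$ with $V$ the $r$-dimensional right singular subspace and proves Lemma~\ref{lm:poisson-projection}, $\mathbb{E}\|EV\|^2\lesssim\sigma_C^2+r^2\sigmax^2$: the union bound is taken only over a $5^r$-point net of the $r$-dimensional ball, and is paid for by controlling the $q$-th moment of $\|EVw\|_2^2-\mathbb{E}\|EVw\|_2^2$ (a sum of $\psi_{1/2}$ variables) with $q\asymp r+\log(\sigma_C/\sigmax)$; the $p_1$-dimensional side is handled by summing variances, not by a net. This is exactly why the first term is $\sigma_C+r\sigmax$ rather than $\sigma_C\sqrt{\log p_1}$. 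For the Gram term the paper invokes the dedicated heteroskedastic sub-exponential Wishart concentration inequality of \cite{cai2018heteroskedastic}, which yields $\sigma_C\sigma_R+\sigma_C^2+\sigma_R\sigmax\sqrt{\log p_1\log p_2}+\sigmax\log p_1\log p_2$ with the logarithms attached only to the $\sigmax$ corrections. Without these two log-free inputs your argument proves a strictly weaker bound than the one claimed. (A minor additional slip: $\Delta(E_{\cdot k}E_{\cdot k}^\top)$ is a rank-one matrix minus a diagonal, not rank at most $2$.)
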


	\subsection{SVD for Heteroskedastic and Incomplete Data}\label{sec:matrix-completion}
	
	Missing data problems arise frequently in high-dimensional statistics. Let $X\in \mathbb{R}^{p_1\times p_2}$ be a rank-$r$ unknown matrix. Suppose only a small fraction of entries of $X$, denoted by $\Omega\subseteq [p_1]\times [p_2]$, are observable with random noise,
	$$Y_{ij} = X_{ij} + Z_{ij}, \quad (i, j) \in \Omega.$$ 
	Here, each entry $Y_{ij}$ is observed or missing with probability $\theta$ or $1-\theta$ for some $0<\theta<1$ and $Z_{ij}$'s are independent, zero-mean, and possibly heteroskedastic. Let $R\in \mathbb{R}^{p_1\times p_2}$ be the indicator of observable entries: 
	$$R_{ij}=\left\{\begin{array}{ll}
		1, & (i, j)\in \Omega;\\
		0, & (i, j) \notin \Omega,
	\end{array}\right.$$ 
	and $R$ and $Y$ are independent. Assume $X = U\Lambda V^\top$ is the singular value decomposition, where $U\in \mathbb{O}_{p_1, r}$ and $V\in \mathbb{O}_{p_2, r}$. Denote $\widetilde{Y}$ as the entry-wise product of $Y$ and $R$, i.e., $\widetilde{Y}_{ij} = Y_{ij} R_{ij}, \forall (i,j)\in [p_1]\times[p_2]$. We aim to estimate $U$ based on $\{Y_{ij}, (i,j)\in \Omega\}$ or equivalently $\widetilde{Y}_{ij}$s.  This problem is heteroskedastic since $\mathbb{E}\widetilde{Y}_{ij} = \theta X_{ij}$ and $\Var(\widetilde{Y}_{ij})$ may vary for different $(i,j)$ pairs. We can apply HeteroPCA to $\widetilde{Y}\widetilde{Y}^\top$ to estimate $U$. The following theoretical guarantee holds.
	\begin{Theorem}\label{th:matrix-completion}
		Let $X$ be a $p_1$-by-$p_2$ rank-$r$ matrix, whose left singular subspace is denoted by 
		$U\in \mathbb{O}_{p_1, r}$. Assume that $\mathbb{E}Y = X$. Suppose $Y$ satisfies $\max_{ij}\|Y_{ij}\|_{\psi_2} \leq C$ and all entries $Y_{ij}$ are independent. 
		Suppose $0 < \theta \leq 1-c$ for constant $c>0$. There exists constant $c_I>0$ such that if $U\in \mathbb{O}_{p_1\times r}$ satisfies $I(U)\|X\|/\lambda_r(X) \leq c_Ip_1/r$, HeteroPCA applied to $\widetilde{Y}\widetilde{Y}^\top$ with $T = \Omega\left(\log(\theta\lambda_r^2(X)/p_1)\vee1\right)$ outputs an estimator $\widehat{U}$ satisfying
		\begin{equation}\label{ineq:noisy-mc-upper}
			\left\|\sin\Theta(\widehat{U}, U)\right\| \lesssim \frac{\max\left\{\sqrt{p_2(\theta+\theta^3p_1^2)\log(p_1)}, \theta p_1 \log^2(p_1)\right\}}{\theta^2\lambda_r^2(X)} \wedge 1
		\end{equation}
		with probability at least $1 - p_1^{-C}$.
	\end{Theorem}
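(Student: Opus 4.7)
The plan is to reduce the problem to the deterministic robust $\sin\Theta$ theorem (Theorem~\ref{th:diagonal-less}), applied to the random matrix $N = \tilde{Y}\tilde{Y}^\top$ with rank-$r$ signal matrix $M = \theta^2 XX^\top$. This choice of $M$ shares its left singular subspace $U$ with $X$ and has $\lambda_r(M) = \theta^2\lambda_r^2(X)$, which is exactly the denominator in \eqref{ineq:noisy-mc-upper}. The stated incoherence hypothesis $I(U)\|X\|/\lambda_r(X) \leq c_I p_1 / r$ implies the condition \eqref{ineq:incoherence-constant-dl-d-k} on $M$ (modulo constants, absorbing the factor coming from squaring the condition number). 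The whole task therefore reduces to a high-probability bound on $\|\Delta(N - M)\|$.

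Set $\tilde X := \theta X = \mathbb{E}\tilde Y$ and $W := \tilde Y - \tilde X$. The entries $W_{ij} = R_{ij}Y_{ij} - \theta X_{ij}$ are independent, mean-zero, sub-Gaussian with $\|W_{ij}\|_{\psi_2} \lesssim 1$, and crucially have variance $\mathrm{Var}(W_{ij}) \lesssim \theta$ (since $\mathrm{Var}(R_{ij}Y_{ij}) = \theta\,\mathbb{E}Y_{ij}^2 - \theta^2 X_{ij}^2 \lesssim \theta$) and fourth moment $\mathbb{E}W_{ij}^4 \lesssim \theta$. Expand
$$N - M = \tilde X W^\top + W\tilde X^\top + WW^\top,$$
and observe that $\mathbb{E}[WW^\top]$ is diagonal (rows of $W$ have independent coordinates across columns), so $\Delta(WW^\top) = \Delta(WW^\top - \mathbb{E}WW^\top)$. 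Hence $\Delta(N-M)$ decomposes into three independent column-sums indexed by $k = 1, \ldots, p_2$, each of which can be controlled by matrix Bernstein, after noting $\|\Delta(A)\| \leq 2\|A\|$.

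For the cross term $\tilde X W^\top = \sum_k \tilde X_{\cdot k} W_{\cdot k}^\top$, the dominant variance parameter is $\|\sum_k \mathbb{E}\|W_{\cdot k}\|^2\, \tilde X_{\cdot k}\tilde X_{\cdot k}^\top\| \lesssim \theta^3 p_1 \|X\|^2 \leq \theta^3 p_1^2 p_2$ (using the trivial $\|X\|^2 \leq \|X\|_{\mathrm F}^2 \leq p_1 p_2 C^2$ from $|X_{ij}| \leq C$), so matrix Bernstein produces $\sqrt{\theta^3 p_1^2 p_2 \log p_1}$. For the noise-noise term $\sum_k (W_{\cdot k}W_{\cdot k}^\top - \mathbb{E}W_{\cdot k}W_{\cdot k}^\top)$, each summand is sub-exponential (squares of sub-Gaussians are $\psi_1$), so a $\psi_1$-matrix-Bernstein inequality, or equivalently the heteroskedastic Wishart concentration of \cite{cai2018heteroskedastic}, gives both a variance branch $\sqrt{\theta p_2 \log p_1}$ (using $\mathbb{E}W_{ij}^4 \lesssim \theta$ combined with the zero-mean off-diagonal structure of each $W_{\cdot k}W_{\cdot k}^\top$) and a tail branch $\theta p_1 \log^2 p_1$ (one log from Bernstein, one log from the high-probability bound $\|W_{\cdot k}\|^2 \lesssim \theta p_1 + \log p_1$). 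Summing the three estimates yields
$$\|\Delta(N - M)\| \lesssim \max\bigl\{\sqrt{p_2(\theta + \theta^3 p_1^2)\log p_1},\ \theta p_1 \log^2 p_1\bigr\}$$
with probability at least $1 - p_1^{-C}$, and plugging into Theorem~\ref{th:diagonal-less} gives \eqref{ineq:noisy-mc-upper}.

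The main obstacle is the careful bookkeeping in the noise-noise bound: one must compute $\mathbb{E}[(W_{\cdot k}W_{\cdot k}^\top - \mathbb{E}W_{\cdot k}W_{\cdot k}^\top)^2]$ with enough precision to extract the extra factor of $\theta$ in the variance that distinguishes $\sqrt{\theta p_2 \log p_1}$ from $\sqrt{p_2 \log p_1}$ (this factor is essential for the bound to degrade gracefully as $\theta \to 0$), and then invoke a matrix Bernstein inequality for unbounded sub-exponential summands to convert the high-probability bound on $\|W_{\cdot k}\|^2$ into the $\theta p_1 \log^2 p_1$ tail term. A secondary subtlety is the mismatch between the linear-in-condition-number incoherence hypothesis on $X$ and the condition needed by Theorem~\ref{th:diagonal-less} applied to $M = \theta^2 XX^\top$, which is quadratic; this gap is handled by appropriately tuning the universal constant $c_I$ and does not affect the final rate.
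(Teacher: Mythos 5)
Your proposal is correct in outline and reaches the paper's bound, but it takes a genuinely different route to controlling $\|\Delta(\tilde Y\tilde Y^\top - \theta^2 XX^\top)\|$. The paper does not split off the signal: it writes $\tilde Y\tilde Y^\top - \mathbb{E}\tilde Y\tilde Y^\top = \sum_{k=1}^{p_2} B_k$ with $B_k = \tilde Y_{\cdot k}\tilde Y_{\cdot k}^\top - \mathbb{E}\tilde Y_{\cdot k}\tilde Y_{\cdot k}^\top$, computes the matrix variance $\sigma_B^2 = \|\sum_k\mathbb{E}B_k^2\|\lesssim p_2(\theta+\theta^3p_1^2)$ and the Orlicz norm $\|\,\|B_k\|\,\|_{\psi_1}\lesssim\theta p_1$ directly from the first four moments of $\tilde Y_{ij}$, and invokes a single $\psi_1$-type matrix Bernstein inequality (Proposition 2 of Koltchinskii et al.). Your three-term decomposition $\tilde XW^\top + W\tilde X^\top + WW^\top$ recovers the same two variance contributions --- $\theta^3p_1^2p_2$ from the cross terms with the signal and $\theta p_2$ (plus a $\theta^2p_1p_2$ term from $\sum_{s\neq i}\mathbb{E}W_{ik}^2\mathbb{E}W_{sk}^2$ that you should not drop silently, though it is dominated by $p_2(\theta+\theta^3p_1^2)$ via AM--GM) from the pure-noise Gram matrix --- so the rates agree. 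The trade-off: the paper's unbounded-summand Bernstein produces a tail factor $\log\bigl(U_B^{(1)}/(\sigma_B/\sqrt{p_2})\bigr)$ that must be tamed by a separate \emph{lower} bound on $\sigma_B$ and a case analysis on $\lambda_r(X)$, which your truncation route ($\|W_{\cdot k}\|_2^2\lesssim\theta p_1+\log p_1$ with high probability, then bounded matrix Bernstein) avoids, at the cost of an additive $\log^2 p_1$ that is only absorbed into $\theta p_1\log^2p_1$ when $\theta p_1\gtrsim 1$. Finally, the linear-versus-quadratic incoherence mismatch you flag is real --- Theorem~\ref{th:diagonal-less} applied to $M=\theta^2XX^\top$ needs $I(U)\|X\|^2/\lambda_r^2(X)\leq c_Ip_1/r$ while the hypothesis controls $I(U)\|X\|/\lambda_r(X)$ --- but the paper's own proof invokes the theorem at this step without further comment, so your treatment is no less careful than the original.
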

	
	\begin{Remark}[Comparison with matrix completion]\label{rm:matrix-completion}{\rm
			Our result is related to a substantial body of literature on low-rank matrix completion. For example,  \cite{candes2009exact,candes2010power,recht2011simpler} analyzed the performance of nuclear norm minimization;
			\cite{mazumder2010spectral} introduced the spectral regularization algorithm for incomplete matrix learning and developed the software package \textit{SoftImpute}\footnote{\url{https://cran.r-project.org/web/packages/softImpute/index.html}}; \cite{keshavan2010matrix,keshavan2010matrixnoisy,keshavan2012efficient,jain2013low} analyzed the alternating gradient descent and spectral algorithm for matrix completion with/without noise; \cite{nadakuditi2014optshrink} developed \emph{OptShrink}, an algorithm for matrix estimation based on the optimal shrinkage of singular values and truncated SVD guided by random matrix theory; 			\cite{robin2019low} studied the low-rank model for count data with missing values; also see \cite{cai2018exploiting} for a recent survey of matrix completion. Different from the literature on matrix completion, our goal here is to estimate the singular subspace $U\in \mathbb{O}_{p_1, r}$ rather than the whole matrix $X\in \mathbb{R}^{p_1\times p_2}$. We apply HeteroPCA to impute the diagonal entries of $XX^\top$, not the missing entries in $X$ itself as in most of the aforementioned matrix completion literature.
			
			In addition, when the average amplitude of all entries in $X$ is a constant (i.e.~$\|X\|_F^2 \asymp p_1p_2$) and $X$ is well conditioned (i.e., $\lambda_1(X) \asymp \lambda_r(X)$), Theorem \ref{th:matrix-completion} implies that the HeteroPCA estimator is consistent as long as the expected sample size satisfies
			\begin{equation}\label{eq:omega-rate-consistent}
				\mathbb{E}|\Omega| \gg \max\left\{p_1^{1/3}p_2^{2/3}r^{2/3}\log^{1/3}(p_1), p_1r^2\log(p_1), p_1r\log(p_1)\log(p_1p_2)\right\}.
			\end{equation}
			In the classic literature on matrix completion \citep{keshavan2010matrix,recht2011simpler}, the sample size requirement is $|\Omega| \gtrsim (p_1+p_2)r \cdot {\rm polylog}(p)$. When $p_1 \gtrsim p_2$, these sample size requirements nearly match and coincide with existing lower bounds in the literature \cite[Theorem 1.7]{candes2010power}. When $p_1 \ll p_2$, \eqref{eq:omega-rate-consistent} requires much fewer samples than what is needed for matrix completion; in other words, HeteroPCA can consistently estimate the $p_1$-by-$r$ subspace $U_1$, even if most columns of $X$ are completely missing and estimating the whole $p_1$-by-$p_2$ matrix accurately is impossible. To our best knowledge, we are among the first to show such a result. }
	\end{Remark}
	
	\begin{Remark}[Time complexity]{\rm
			If the target matrix $X$ is $p_1$-by-$p_2$ and rank-$r$, the time complexity of HeteroPCA, regular SVD,  diagonal-deletion SVD, OptShrink \citep{nadakuditi2014optshrink}, and SoftImpute \citep{mazumder2010spectral} are $O(|\Omega|^2/p_2 + Tp_1^2r)$, $O(T(|\Omega|r + r^3))$, $O(|\Omega|^2/p_2 + Tp_1^2r)$, $O(T(|\Omega|r + r^3))$, and $O(T(|\Omega| + p_1p_2r))$, respectively. Here, $T$ denotes the number of iterations in each method.}
	\end{Remark}
	
	\begin{Remark}\label{rm:vaswani-compare}\rm
		Recently, \cite{vaswani2018pca,vaswani2020fast} studied PCA with sparse data-dependent noise and incomplete data. They proved that if the signal-to-noise ratio is strong enough, the uncorrelated noise is small enough, and the proportion of missing values is small enough, one can estimate the subspace accurately. Under the model setting of Theorem \ref{th:matrix-completion} and some regularity conditions, \cite[Corollary 3.7]{vaswani2018pca} can imply $\widetilde{U}_1 = \SVD_r(Y)$ satisfies
		\begin{equation}\label{ineq:MC-vaswani}
			\|\sin\Theta(\widetilde{U}_1, U_1)\| \lesssim \left(\sqrt{\frac{sbr}{p_1p_2}} + \frac{p_2}{\lambda_r^2(X)} + \sqrt{\frac{r^2s\log p_1}{p_2}} + \sqrt{\frac{r\log p_1}{\lambda_r^2(X)}}\right)\wedge 1.
		\end{equation}
		Here, $s, b$ are the maximum number of missing values in each row and in each column, respectively. To ensure \eqref{ineq:MC-vaswani} gives an nontrivial upper bound, one must have $(s/p_1)(b/p_2)\lesssim 1/r$. In contrast, Theorem \ref{th:matrix-completion} implies that HeteroPCA can consistently recover $U_1$ even if $s \approx p_1$ and $b\approx p_2$, i.e., only a smaller fracture of entries are observable, if the observable entries are uniform randomly selected from the target matrix.
	\end{Remark}
	
	\begin{Remark}\label{rm:matrix completion}\rm 
		PCA for heteroskedastic and incomplete data is another closely related problem. Suppose one observes incomplete i.i.d. samples $Y_1,\ldots, Y_n \in \mathbb{R}^p$ from the generalized spiked covariance model \eqref{eq:multivariate-normal} with missingness:
		\begin{equation*}
			\forall 1\leq i \leq p, 1\leq k \leq n,\quad  R_{ik} = \left\{\begin{array}{ll}
				1, & Y_{ik} \text{ is observable};\\
				0, & Y_{ik} \text{ is missing},
			\end{array}\right.
		\end{equation*}
		where $\{R_{ik}\}_{1\leq i \leq p, 1\leq k \leq n}$ are independent of $Y_1,\ldots, Y_n$. The goal is to estimate $U$. Many existing literature on PCA with incomplete data focused on regular SVD methods under the homoskedastic noisy setting (see, e.g., \cite{lounici2014high,cai2016minimax}), which are not directly suitable here. To estimate $U$ using HeteroPCA, we can evaluate the generalized sample covariance matrix,
		\begin{equation*}
			\begin{split}
				\widehat{\Sigma}^\ast = (\widehat{\sigma}_{ij}^\ast)_{1\leq i,j\leq p},\quad \text{with} \quad & \widehat{\sigma}_{ij}^\ast = \frac{\sum_{k=1}^n (Y_{ik} - \bar{Y}_i^\ast)(Y_{ik} - \bar{Y}_j^\ast)R_{ik}R_{jk}}{\sum_{k=1}^nR_{ik}R_{jk}}\\
				\text{and}\quad & \bar{Y}_i^\ast = \frac{\sum_{k=1}^n Y_{ik}R_{ik}}{\sum_{k=1}^n R_{ik}}.
			\end{split}
		\end{equation*}
		Then $U$ can be estimated by applying Algorithm \ref{al:heteroPCA} on $\widehat{\Sigma}^\ast$. A similar consistent upper bound result to Theorem \ref{th:matrix-completion} can be developed for this procedure. 
		
		In a more general scenario that noise $\varepsilon_k$ has non-diagonal covariance or depends linearly on the signal $X_k$, the readers are referred to  \cite{vaswani2018pca,vaswani2020fast} for a theory of the SVD estimator.
	\end{Remark}

	\section{Numerical Results}\label{sec:simu}
	
	In this section, we investigate the numerical performance of the proposed procedure. All simulation results are based on 1000 repeated independent experiments. The average and the standard deviation of estimation errors are respectively indicated by markers and error bars in each plot.

	\subsection{PCA under the generalized spiked covariance model}\label{sec:simu-PCA-generalized-spiked-covariance}
	
	We first consider PCA under the generalized spiked covariance model \eqref{eq:multivariate-normal}. Let $p =30, n \in [60, 600]$, and $r \in \{3, 5\}$. We generate a $p$-by-$r$ random matrix $U_0$ with i.i.d.~standard Gaussian entries, $w_1,\ldots, w_p\overset{iid}{\sim} \text{Unif}[0, 1]$, and $\sigma_1,\ldots, \sigma_p \overset{iid}{\sim}\text{Unif}[0,1]$. The purpose of generating uniform random vectors $w, \sigma$ is to introduce heteroskedasticity into observations. Then, we let $U = \text{QR}(\diag(w)\cdot U_0) \in \mathbb{O}_{p, r}$ and $\Sigma_0 = U\diag(1, \ldots, r)U^\top \in \mathbb{R}^{p\times p}$. We aim to recover $U$ based on i.i.d.~observations $\{Y_k = X_k+\varepsilon_k\}_{k=1}^n$, where $X_1,\ldots, X_n \overset{iid}{\sim}N(0, \Sigma_0), \varepsilon_1,\ldots, \varepsilon_n\overset{iid}{\sim} N(0, \diag(\sigma_1^2,\ldots, \sigma_n^2))$. We implement the proposed HeteroPCA, diagonal-deletion, and regular SVD approaches and plot the average estimation errors and standard deviation in $\sin\Theta$ distance. We also implement the classic factor analysis method \citep{thomson1939factorial,lawley1962factor}, \texttt{factanal} function in R \texttt{stats} package, and the Bayesian factor analysis method, \texttt{MCMCfactanal} function from R \texttt{MCMCpack} package  \citep{martin2011mcmcpack}. The simulation results are summarized in Figure \ref{fig:pca-1}.
	\begin{figure}[!h]\centering
		\includegraphics[height=4.5cm]{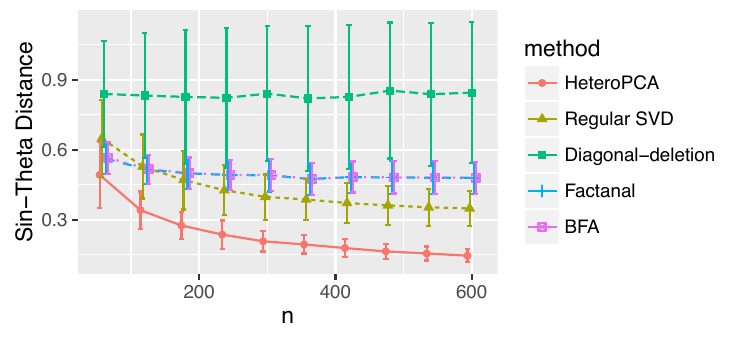}\\
		\includegraphics[height=4.5cm]{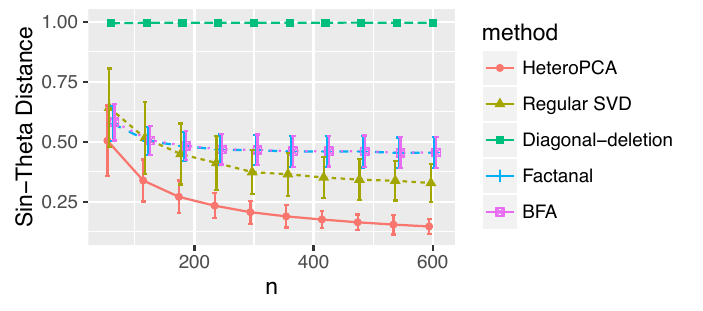}\\
		\caption{Average $\sin\Theta$ loss versus sample size $n$ under the generalized spiked covariance model (Section \ref{sec:simu-PCA-generalized-spiked-covariance}). Upper panel: $r= 3$; lower panel: $r=5$}
		\label{fig:pca-1}
	\end{figure}
	It can be seen that the proposed HeteroPCA estimator significantly outperforms other methods; the regular SVD yields larger estimation error; and the diagonal-deletion estimator performs unstably across different settings. This matches the theoretical findings in Section \ref{sec:pca}.
	
	Next we study how the degree of heteroskedasticity affects the performance. Let
	$$v_1,\ldots, v_p\overset{iid}{\sim}\text{Unif}[0, 1], \quad \sigma_k^2 = \frac{0.1\cdot p \cdot v_k^\alpha}{\sum_{i=1}^p v_i^\alpha}, \quad k=1,\ldots, p.$$ 
	In such case, $\sigsum^2 = \sigma_1^2+\cdots +\sigma_p^2$ always equals $0.1p$ and $\alpha$ characterizes the degree of heteroskedasticity: the larger $\alpha$ results in a more imbalanced distribution of $(\sigma_1,\ldots, \sigma_p)$; if $\alpha = 0$, $\sigma_1=\cdots = \sigma_p$ and the setting becomes homoskedastic. Now we generate $U, \Sigma_0$ and $\{Y_k, X_k, \varepsilon_k\}_{k=1}^n$ in the same way as the previous setting. We only compare HeteroPCA with regular SVD and diagonal-deletion estimator since it takes a too long time to run factor analysis methods in this setting. The average estimation errors for $U$ are plotted in Figure \ref{fig:pca-2}. The results again suggest that the performance of diagonal-deletion estimator is unstable across different settings. When $\alpha=0$, i.e., the noise is homoskedastic, the performance of HeteroPCA and regular SVD are comparable; but as $\alpha$ increases, the estimation error of HeteroPCA grows significantly slower than that of the regular SVD, which is consistent with the theoretical results in Theorem \ref{th:heterogeneous-PCA}.
	\begin{figure}[!h]
		\centering
		\subfigure[$p=50, n =30, r=5$]{\includegraphics[height=4.5cm]{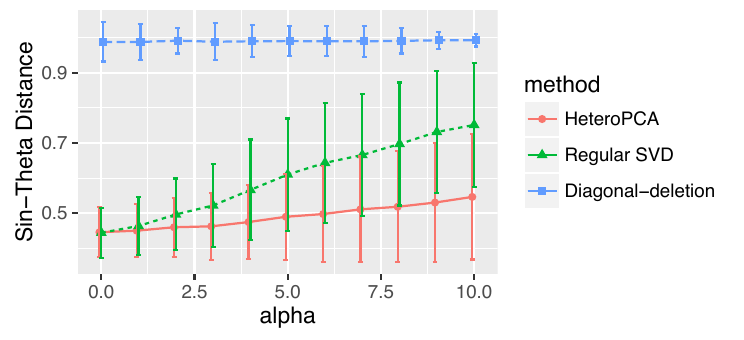}}
		\subfigure[$p=200, n=400, r=5$]{\includegraphics[height=4.5cm]{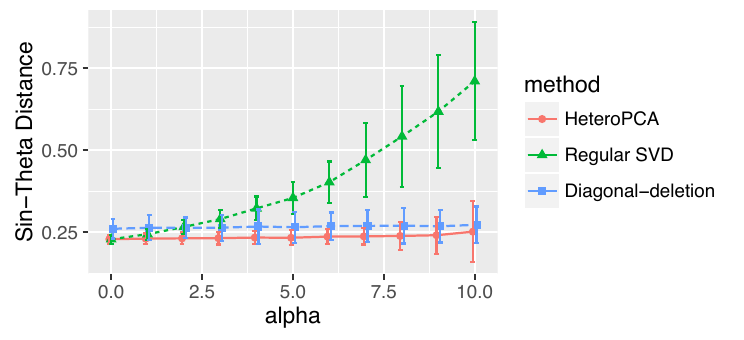}}
		\caption{Average $\sin\Theta$ loss versus heteroskedastic level $\alpha$ under the generalized spiked covariance model (Section \ref{sec:simu-PCA-generalized-spiked-covariance})}
		\label{fig:pca-2}
	\end{figure}
	
	\subsection{SVD under heteroskedastic noise}\label{sec:simu-SVD}
	
	Next, we consider the problem of SVD under heteroskedastic noise discussed in Section \ref{sec:heteroskedastic perturbation}. Let $U_0 \in \mathbb{R}^{p_1\times r}$ and $V_0\in \mathbb{R}^{p_2\times r}$ be i.i.d.~Gaussian ensembles for $(p_1, p_2) = (50, 200), (200, 1000)$ and $r=3$. To introduce heteroskedasticity, we also randomly draw $w, v_1\in \mathbb{R}^{p_1}$, and $v_2\in \mathbb{R}^{p_2}$ with i.i.d.~${\rm Unif}[0, 1]$ entries. Then we evaluate $U = \text{QR}\left(U_0 \cdot \diag(w)^4\right)$, $V = \text{QR}\left(V_0\right)$, and construct the signal matrix $X = (p_1p_2)^{1/4}\cdot U\diag(1,\ldots, r)V^\top$. The noise matrix is drawn as $E_{ij} \overset{ind}{\sim}N(0, \sigma_0^2\cdot \sigma_{ij}^2)$, where $\sigma_{ij} = (v_1)_i^4\cdot (v_2)_j^4$, $\sigma_0$ varies from 0 to 2, $1\leq i \leq p_1$, and $1\leq j \leq p_2$. Based on the $p_1$-by-$p_2$ observation $Y = X+E$, we implement HeteroPCA with input of $YY^\top$, regular-SVD, diagonal-deletion, and \emph{OptShrink}\footnote{Software package available at \url{https://web.eecs.umich.edu/~rajnrao/optshrink/}} (\cite{nadakuditi2014optshrink}, an algorithm for matrix estimation based on the optimal shrinkage of singular values and truncated SVD guided by random matrix theory) to evaluate $\widehat{U}, \widehat{V}$. For each of the estimators $\widehat{U}$ and $\widehat{V}$, we also estimate $X$ by $\widehat{X}=\widehat{U}\widehat{U}^\top Y\widehat{V}\widehat{V}^\top$. The average $\sin\Theta$ norm errors of $\widehat{U}, \widehat{V}$ and the average Frobenius norm error of $\widehat{X}$ are presented in Figure \ref{fig:svd}. We can see the proposed HeteroPCA outperforms other methods in all estimations for $U, V$, and $X$, and the advantage of HeteroPCA is more significant when the noise level increases.
	\begin{figure}
		\centering
		\subfigure[$p_1 = 50, p_2 = 200$ ]{\includegraphics[height=9cm]{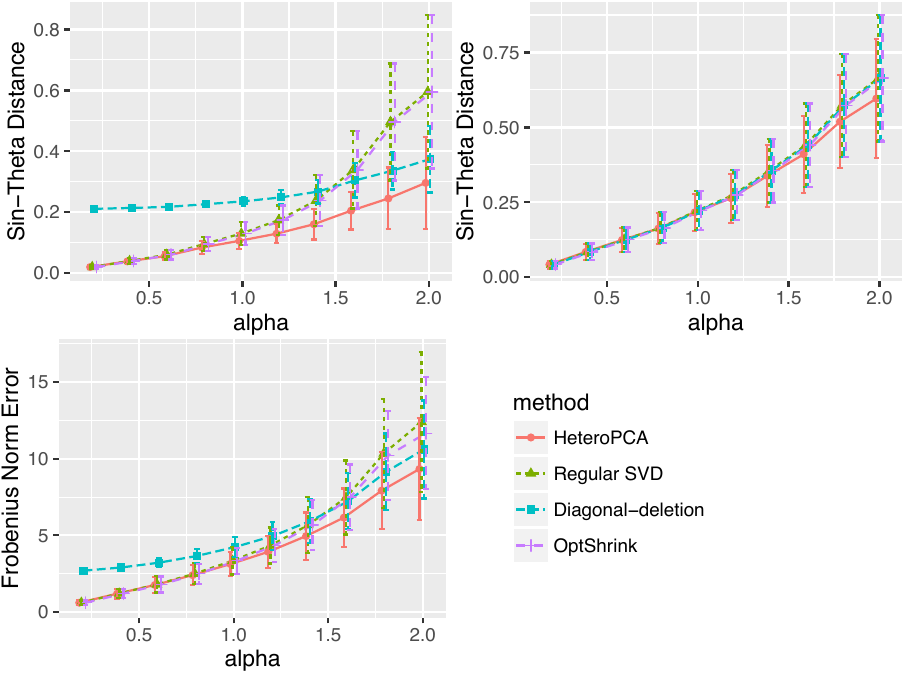}}
		\subfigure[$p_1 = 200, p_2 = 1000$]{\includegraphics[height=9cm]{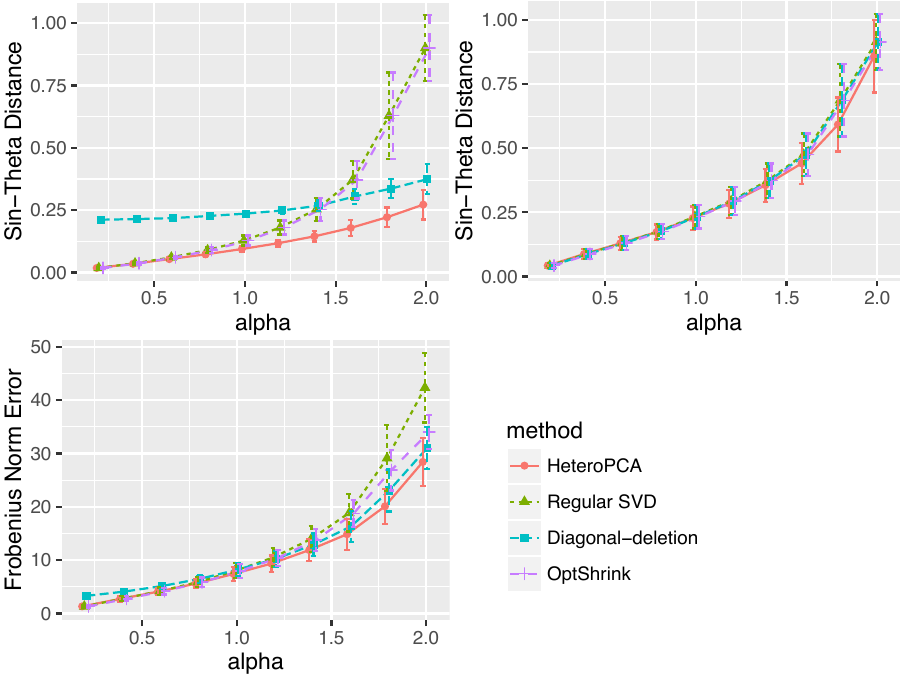}}
		\caption{Estimation errors of $\widehat{U}$ (top left), $\widehat{V}$ (top right), and $\widehat{X}$ (bottom left) in SVD under heteroskedastic noise (Section \ref{sec:simu-SVD})}
		\label{fig:svd}
	\end{figure}

	\subsection{Poisson PCA}\label{sec:simu-Poisson}
	
	We generate $U_0\in \mathbb{R}^{p_1\times r}$ and $V_0\in \mathbb{R}^{p_2\times r}$ with i.i.d. standard normal entries for $(p_1, p_2, r) = (50, 500, 3)$ or $(200, 1000, 3)$. Similarly to previous settings, we introduce heteroskedasticity by generating a vector $w\in \mathbb{R}^{p_1}$ with i.i.d. Unif[0, 1] entries. Let $U = |U_0\cdot\diag(w)^4| \in \mathbb{R}^{p_1\times r}, V = |V_0| \in \mathbb{R}^{p_2\times r}$, $X = \lambda U\diag(1,\ldots, r) V^\top \in \mathbb{R}^{p_1\times p_2}$, and $Y_{ij} \sim \text{Poisson}(X_{ij})$ independently. Here, $\lambda>0$ measures the signal strength. The performance of HeteroPCA, regular SVD, diagonal-deletion, and OptShrink on estimation of left singular subspaces are provided in Figure \ref{fig:poisson}. These plots again illustrate the merit of the proposed HeteroPCA method.
	\begin{figure}[!h]
		\centering
		\subfigure[$p_1 = 50, p_2 = 500$]{\includegraphics[height=4.5cm]{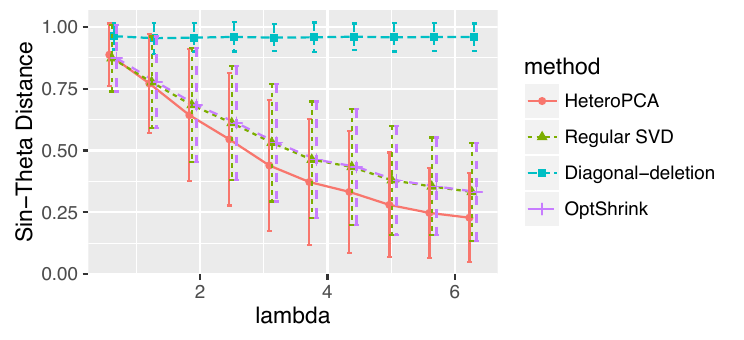}}
		\subfigure[$p_1=200, p_2 = 1000$]{\includegraphics[height=4.5cm]{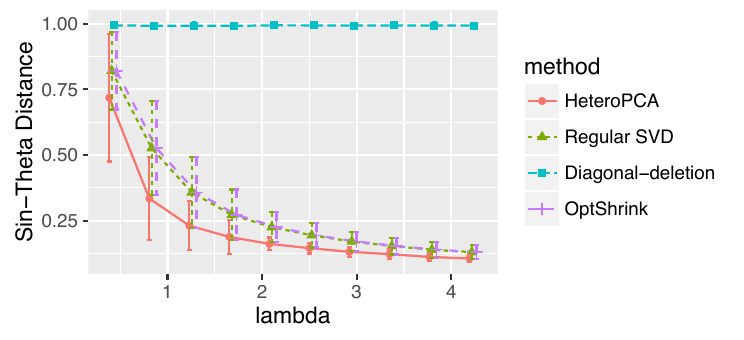}}
		\caption{Estimation errors for a ranging value of signal strength $\lambda$ under the Poisson PCA model (Section \ref{sec:simu-Poisson})}
		\label{fig:poisson}
	\end{figure} 
	
	\subsection{SVD based on heteroskedastic and incomplete data}\label{sec:simu-incomplete}
	
	Finally, in the following experiment we study SVD based on heteroskedastic and incomplete data in the setting of \prettyref{sec:matrix-completion}. Generate $Y, X, Z\in \mathbb{R}^{p_1\times p_2}$ in the same way as the previous heteroskedastic SVD setting with $p_1=50, 100$, $r = 3, 5$, $\sigma_0 = .2$, and $p_2$ ranging from 800 to 3200. 
	Each entry of $Y$ is observed independently with probability $\theta=0.1$.
	We aim to estimate $U$ based on $\{Y_{ij}: (i,j)\in\Omega\}$. 
	In addition to HeteroPCA, regular SVD, diagonal-deletion SVD, and OptShrink, we also apply the nuclear norm minimization via \emph{Soft-Impute} package (\cite{mazumder2010spectral}, also see Remark \ref{rm:matrix completion})
	$$\widehat{X}_\ast = \argmin_{\widehat{X}\in \mathbb{R}^{p_1\times p_2}}\sum_{(i,j) \in\Omega }(\widetilde{Y}_{ij} - \widehat{X}_{ij})^2 + \nu \|\widehat{X}\|_\ast, \quad \widehat{U} = \SVD_r(\widehat{X}).$$
	To avoid the cumbersome issue of parameter $\nu$ selection, we evaluate the above nuclear norm minimization estimator for a grid of values of $\nu$, then record the outcome with the minimum $\sin\Theta$ distance error $\|\sin\Theta(\widehat{U}, U)\|$. From the results plotted in Figure \ref{fig:mc}, we can see that HeteroPCA significantly outperforms all other methods when $p_1\ll p_2$, which matches the discussion in Remark \ref{rm:matrix-completion}. 
	
	\begin{figure}[!h]
		\centering
		\includegraphics[height=4.5cm]{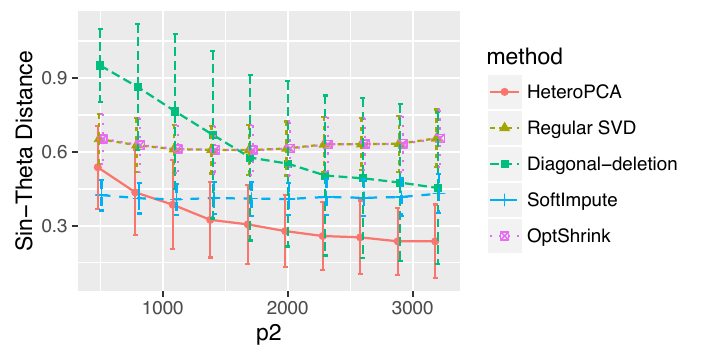}\\
		\includegraphics[height=4.5cm]{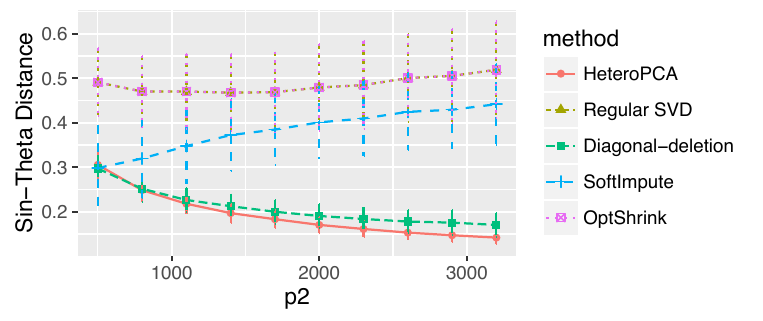}	
		\caption{Average $\sin\Theta$ distance error for SVD based on heteroskedastic and incomplete data (Section \ref{sec:simu-incomplete}). Here, $p_1 = 50, r =5, \theta = .2$ (Upper Panel) and $p_1 = 100, r=3, \theta = .2$ (Lower Panel); $p_2$ varies from 800 to 3200}
		\label{fig:mc}
	\end{figure}
	
	\section{Discussion}\label{sec:discussion}
	
	We consider PCA in the presence of heteroskedastic noise in this paper. To alleviate the significant bias incurred on diagonal entries of the Gram matrix due to heteroskedastic noise, we introduced a new procedure named HeteroPCA that adaptively imputes diagonal entries to remove the bias. The proposed procedure achieves optimal rate of convergence in a range of settings. In addition, we discuss the applications of the proposed algorithm to heteroskedastic low-rank matrix denoising, Poisson PCA, and SVD based on heteroskedastic and incomplete data. 
	
	The proposed HeteroPCA  procedure can also be applied to many other problems where the noise is heteroskedastic. First, {\it exponential family PCA} is a commonly used technique for dimension reduction on non-real-valued datasets \citep{collins2002generalization,mohamed2009bayesian}. As discussed in the introduction, the exponential family distributions, e.g., exponential, binomial, and negative binomial, may be highly heteroskedastic. As in the case of Poisson PCA considered in Section \ref{sec:Poisson PCA}, the proposed HeteroPCA algorithm can be applied to exponential family PCA.
	
	In addition, community detection in social networks has attracted significant attention in the recent literature \citep{fortunato2010community}. Although most of existing results focused on unipartite graphs, bipartite graphs, i.e., all edges are between two groups of nodes, often appear in practice \citep{melamed2014community,florescu2016spectral,zhou2020optimal}. The proposed HeteroPCA can also be applied to \emph{community detection for bipartite stochastic block model}. Similarly to the analysis for heteroskedastic low-rank matrix denoising in \prettyref{sec:heteroskedastic perturbation}, HeteroPCA can be shown to have advantages over other baseline methods. 
	
	The proposed framework is also applicable to solve the {\it heteroskedastic tensor SVD} problem, which aims to recover the low-rank structure from the tensorial observation corrupted by heteroskedastic noise. Suppose one observes $\Y = \X + \Z \in \mathbb{R}^{p_1\times p_2\times p_3}$, where $\X$ is a Tucker low-rank signal tensor and $\Z$ is the noise tensor with independent and zero-mean entries. If $\Z$ is homoskedastic, the higher-order orthogonal iteration (HOOI) \citep{de2000best} was shown to achieve the optimal performance for recovering $\X$ \citep{zhang2018tensor}. If $\Z$ is heteroskedastic, we can apply HeteroPCA instead of the regular SVD to obtain a better initialization for HOOI. Similarly to the argument in this article, we are able to show that this modified HOOI yields more stable and accurate estimates than the regular HOOI.
	
	\emph{Canonical correlation analysis} (CCA) is one of the most important tools in multivariate analysis for exploring the relationship between two sets of vector samples \citep{hotelling1936relations}. In the standard procedure of CCA, the core step is a regular SVD on the adjusted cross-covariance matrix between samples. When the observations contain heteroskedastic noise, one can replace the regular SVD procedure by HeteroPCA to achieve better performance.

	\section{Proofs}\label{sec:proof}
	
	In this section, we prove the main results, namely, Theorems \ref{th:heterogeneous-PCA} and  \ref{th:diagonal-less}. For reasons of space, the other proofs are given in the supplementary materials \citep{zhang2018supplement}.

	\subsection{Proofs for Heteroskedastic PCA}
	\begin{proof}[Proof of Theorem \ref{th:heterogeneous-PCA}]
		First, we introduce
		$$E = [\varepsilon_1,\ldots, \varepsilon_n]\in \mathbb{R}^{p\times n}, \quad \gamma_k = \Lambda^{1/2}U^\top(X_k-\mu)\in \mathbb{R}^r,\quad \Gamma = [\gamma_1,\ldots, \gamma_n] \in \mathbb{R}^{r\times n}.$$
		Then the observations can be written as
		$$Y_k = X_k + \varepsilon_k = \mu + U\Lambda^{1/2}\gamma_k  + \varepsilon_k, \quad \text{or}\quad \quad Y = \mu 1_n^\top +  U\Lambda^{1/2} \Gamma + E,$$
		where $\mu\in \mathbb{R}^p$ is a fixed vector, $\mathbb{E}\gamma_k = 0, \Cov(\gamma_k) = I$, $E$ has independent entries, and $\Gamma$ has independent columns. We also denote $\bar{X}\in \mathbb{R}^p, \bar{E}\in \mathbb{R}^p, \bar{\Gamma}\in \mathbb{R}^r$ as the averages of all columns of $X, E$, and $\Gamma$, respectively. Since $\widehat{\Sigma}$ is invariant after any translation on $Y$, we can assume $\mu = 0$ without loss of generality. The rest of the proof is divided into three steps for the sake of presentation.
		
		\begin{enumerate}[leftmargin=*]
			\item[Step 1] We define $\widehat{\Sigma}_X = (XX^\top - n\bar{X}\bar{X}^\top)/(n-1)$ as the signal sample covariance. The aim of this step is to develop a concentration inequality for $\widehat{\Sigma} - \Sigma_X$. To this end, we consider the following decomposition of $n(\widehat{\Sigma} - \widehat{\Sigma}_X)$,
			\begin{equation}\label{eq:hetero-PCA-upper-1}
				\begin{split}
					& (n-1)(\widehat{\Sigma} - \widehat{\Sigma}_X)  = (n-1)\widehat{\Sigma} - (XX^\top-n\bar{X}\bar{X}^\top)\\
					= & YY^\top - n\bar{Y}\bar{Y}^\top - (XX^\top-n\bar{X}\bar{X}) \\
					= & (X+E)(X+E)^\top - (XX^\top-n\bar{X}\bar{X}^\top) - n \left(\bar{X}\bar{X}^\top + \bar{X}\bar{E}^\top + \bar{E}\bar{X}^\top + \bar{E}\bar{E}^\top\right)\\
					= & XE^\top + EX^\top + EE^\top - n\left(\bar{X}\bar{E}^\top + \bar{E}\bar{X}^\top + \bar{E}\bar{E}^\top\right).
				\end{split}
			\end{equation}
			We analyze each term of \eqref{eq:hetero-PCA-upper-1} separately as follows. Since $E$ has independent entries and $\Var(E_{ij}) = \sigma_i^2$,
			the rowwise structured heteroskedastic concentration inequality \cite[Theorem 6]{cai2020non} implies
			\begin{equation}\label{ineq:hetero-PCA-upper-2}
				\begin{split}
					\mathbb{E}\left\|EE^\top - \mathbb{E} EE^\top \right\|  \lesssim & \sqrt{n}\sigsum\sigmax + \sigsum^2.
				\end{split}
			\end{equation} 
			Since $X$ is deterministic, $E$ is random, and $\mathbb{E}E=0$, we have $\mathbb{E}EX^\top =0$. By Lemma \ref{lm:orthogonal-projection} in the supplementary materials, 
			\begin{equation}\label{ineq:hetero-PCA-upper-3}
				\begin{split}
					& \mathbb{E}_E\left(\left\|EX^\top - \mathbb{E}EX^\top\right\|\Big| X\right) = \mathbb{E}_E\left(\left\|EX^\top \right\|\Big| X\right) \\
					\lesssim & \|X\|\left(\sigma_C + r^{1/4}\sigmax\sigma_C + \sqrt{r}\sigmax\right) \overset{\text{Cauchy-Schwarz}}{\lesssim} \|X\|\left(\sigsum + \sqrt{r}\sigmax\right).
				\end{split}
			\end{equation}
			Since $\mathbb{E}\|\bar{E}\|_2^2 = \sum_{i=1}^p \mathbb{E} (\bar{E})_i^2 = \sum_{i=1}^p \sigma_i^2/n = \sigsum^2/n$, we have
			\begin{equation}\label{ineq:hetero-PCA-upper-4}
				\begin{split}
					& \mathbb{E}_E \left(n\left\|\bar{X}\bar{E}^\top + \bar{E}\bar{X}^\top + \bar{E}\bar{E}^\top\right\| \right)\\
					\leq & \mathbb{E}_E n\|\bar{X}\bar{E}^\top\| + \mathbb{E}_E n\|\bar{E}\bar{X}^\top\| + \mathbb{E}_E n\|\bar{E}\bar{E}^\top\|\\
					\leq &  \mathbb{E}_E 2n\|\bar{X}\|_2 \|\bar{E}\|_2 + \mathbb{E} n\|\bar{E}\|_2^2 \\
					\leq & 2n\|\bar{X}\|_2 \cdot (\mathbb{E}\|\bar{E}\|_2^2)^{1/2} + \mathbb{E}n\|\bar{E}\|_2^2 \leq 2n^{1/2}\sigsum\|\bar{X}\|_2 + \sigsum^2.
				\end{split}
			\end{equation}
			Combining \eqref{ineq:hetero-PCA-upper-2}, \eqref{ineq:hetero-PCA-upper-3}, and \eqref{ineq:hetero-PCA-upper-4}, we have
			\begin{equation*}
				\begin{split}
					& \mathbb{E}_E\left\|(n-1)(\widehat{\Sigma}-\widehat{\Sigma}_X) - \mathbb{E}EE^\top \right\| \\
					\lesssim & \sqrt{n}\sigsum\sigmax + \sigsum^2 + \|X\|(\sigsum + \sqrt{r}\sigmax) + n^{1/2}\|\bar{X}\|_2\sigsum. 
				\end{split}
			\end{equation*}
			Noting that $\mathbb{E}EE^\top = n\diag(\sigma_1^2,\ldots, \sigma_p^2)$ is diagonal and $\Delta(\cdot)$ is the operator that sets all diagonal entries to zero, we further have
			\begin{equation*}
				\begin{split}
					& \mathbb{E}_E\left\|\Delta\left((n-1)(\widehat{\Sigma} - \widehat{\Sigma}_X)\right) \right\| =  \mathbb{E}_E\left\|\Delta\left((n-1)(\widehat{\Sigma} - \widehat{\Sigma}_X) - \mathbb{E}EE^\top\right)\right\|\\ \overset{\text{Lemma \ref{lm:diagonal-less-spectral-norm}}}{\leq}&  2\mathbb{E}_E\left\|(n-1)\left(\widehat{\Sigma} - \widehat{\Sigma}_X\right) - \mathbb{E}EE^\top\right\|\\
					\lesssim & \sqrt{n}\sigsum\sigmax + \sigsum^2 + \|X\|(\sigsum + \sqrt{r}\sigmax) + n^{1/2}\|\bar{X}\|_2\sigsum.
				\end{split}
			\end{equation*}
			\item[Step 2] Next, we study the expectation of the target function with respect to $X$. We specifically need to study $\lambda_r(n\widehat{\Sigma}_X)$, $\|X\|$, and $\|\bar{X}\|_2$. Since $\Gamma\in \mathbb{R}^{r\times n}$ has independent columns and each column is isotropic sub-Gaussian distributed, based on the random matrix theory \cite[Corollary 5.35]{vershynin2010introduction}, 
			$$\bbP\left(\sqrt{n} + C\sqrt{r} + t \geq \|\Gamma\| \geq \lambda_{r}(\Gamma) \geq \sqrt{n} - C\sqrt{r} - t\right) \leq \exp(-Ct^2/2).$$
			In addition, $\sqrt{n}\bar{\Gamma} \in \mathbb{R}^r$ is a sub-Gaussian vector with the identity covariance matrix. 
			By the Bernstein-type concentration inequality \citep[Proposition 5.16]{vershynin2010introduction}, 
			$$\bbP\left(\|\sqrt{n}\bar{\Gamma}\|_2^2 \geq r + C\sqrt{rx} + Cx\right) \leq C\exp(-cx).$$
			If $n \geq Cr$ for some large constant $C>0$, by setting $t = c\sqrt{n}$ and $x = cn$ in the previous two inequalities, we have
			\begin{equation}\label{ineq:hetero-PCA-upper-6}
				2\sqrt{n} \geq \|\Gamma\| \geq \lambda_r(\Gamma) \geq \sqrt{n}/2, \quad \text{and}\quad \|\sqrt{n}\bar{\Gamma}\|_2 \leq \sqrt{n}/3
			\end{equation}
			with probability at least $1 - C\exp(-cn)$. When \eqref{ineq:hetero-PCA-upper-6} holds,
			\begin{equation*}
				\begin{split}
					\lambda_r(n\widehat{\Sigma}_X) = & \lambda_r\left(n(XX^\top - n\bar{X}\bar{X}^\top)\right) = \lambda_r\left(nU\Lambda^{1/2}(\Gamma\Gamma^\top - n\bar{\Gamma}\bar{\Gamma}^\top)\Lambda^{1/2}U^\top\right) \\
					\geq & \lambda_r(\Lambda) \cdot \lambda_r\left(\Gamma\Gamma^\top - n\bar{\Gamma}\bar{\Gamma}^\top\right) \geq \lambda_r(\Lambda)\left(\lambda_r^2(\Gamma) - \|\sqrt{n}\bar{\Gamma}\|_2^2\right)\\
					\overset{\eqref{ineq:hetero-PCA-upper-4}}{\geq} & \lambda_r(\Lambda)\left(n/4 - n/9\right) \gtrsim n\lambda_r(\Lambda);
				\end{split}
			\end{equation*}
			\begin{equation}
				\|X\| \leq \|U\Lambda^{1/2}\Gamma\| \leq \|\Lambda^{1/2}\|\cdot \|\Gamma\| \overset{\eqref{ineq:hetero-PCA-upper-6}}{\leq} 2\sqrt{n}\|\Lambda^{1/2}\| \lesssim  \sqrt{n}\lambda_r^{1/2}(\Lambda),
			\end{equation}
			where the last inequality is due to the assumption that $\|\Lambda\|/\lambda_r(\Lambda) \leq C$ for some constant $C$.
			\begin{equation*}
				\|\bar{X}\|_2 = \|U\Lambda^{1/2} \bar{\Gamma}\|_2 \leq \|\Lambda^{1/2}\|\cdot\|\bar{\Gamma}\|_2 \lesssim \lambda_r^{1/2}(\Lambda).
			\end{equation*}
			Combining the previous three inequalities, we know if \eqref{ineq:hetero-PCA-upper-6} holds,
			\begin{equation}\label{ineq:hetero-PCA-upper-7}
				\begin{split}
					& \frac{\mathbb{E}_E\|\Delta((n-1)(\widehat{\Sigma}-\widehat{\Sigma}_X))\|}{\lambda_r((n-1)\widehat{\Sigma}_X)}\wedge 1 \\ \lesssim & \frac{\sqrt{n}\sigsum \sigmax + \sigsum^2 + (n\lambda_r(\Lambda))^{1/2}(\sigsum + \sqrt{r}\sigmax) + (n\lambda_r(\Lambda))^{1/2}(\Lambda)\sigsum}{n\lambda_r(\Lambda)}\wedge 1\\
					\lesssim & \left(\frac{\sigsum + \sqrt{r}\sigmax}{(n\lambda_r(\Lambda))^{1/2}} + \frac{\sqrt{n}\sigsum\sigmax + \sigsum^2}{n\lambda_r(\Lambda)} \right)\wedge 1\\
					\lesssim & \left(\frac{\sigsum + \sqrt{r}\sigmax}{(n\lambda_r(\Lambda))^{1/2}} + \frac{\sigsum\sigmax}{n^{1/2}\lambda_r(\Lambda)} \right)\wedge 1.
				\end{split}
			\end{equation}
			Here, the last ``$\lesssim$" is due to $\sigsum^2/(n\lambda_r(\Lambda))\wedge 1 \leq \sigsum/(n\lambda_r(\Lambda))^{1/2} \wedge 1$. 
			\item[Step 3] Finally, since $\rank(\widehat{\Sigma}_X) \leq r$, the eigenvectors of $\widehat{\Sigma}_X$ are $U$, and $U$ satisfies the incoherence condition: $I(U) \leq c_Ip/r$, the robust $\sin\Theta$ Theorem (Theorem \ref{th:diagonal-less}) for $T = \Omega\left(\log \left(\frac{n\lambda_r(\Lambda)}{\sigsum^2}\right) \vee 1\right)$ yields
			\begin{equation}\label{ineq:hetero-PCA-upper-5}
				\begin{split}
					& \bbE_E\left\|\sin\Theta(\widehat{U}, U)\right\| \lesssim \bbE_E\left(\frac{\|\Delta((n-1)(\widehat{\Sigma}-\widehat{\Sigma}_X))\|}{\lambda_r((n-1)\widehat{\Sigma}_X)} + 2^{-T}  \right)\wedge 1\\
					\lesssim & \left(\frac{\sqrt{n}\sigsum\sigmax + \sigsum^2 + \|X\|(\sigsum + \sqrt{r}\sigmax) + n^{1/2}\|\bar{X}\|_2\sigsum}{\lambda_r\left((n-1)\widehat{\Sigma}_X\right)} + \frac{\sigsum^2}{n\lambda_r(\Lambda)}\right)\wedge 1.
				\end{split}
			\end{equation}

			\begin{equation*}
				\begin{split}
					\mathbb{E}\|\sin\Theta(\widehat{U}, U)\| = &  \mathbb{E}\|\sin\Theta(\widehat{U}, U)\| 1_{\{\text{\eqref{ineq:hetero-PCA-upper-6} holds}\}} + \mathbb{E}\|\sin\Theta(\widehat{U}, U)\| 1_{\{\text{\eqref{ineq:hetero-PCA-upper-6} does not hold}\}}\\
					\overset{\eqref{ineq:hetero-PCA-upper-6}}{\lesssim} & \left(\frac{\sigsum + \sqrt{r}\sigmax}{(n\lambda_r(\Lambda))^{1/2}} + \frac{\sigsum\sigmax}{n^{1/2}\lambda_r(\Lambda)} \right)\wedge 1 + \bbP\left(\text{\eqref{ineq:hetero-PCA-upper-6} does not hold}\right)\\
					\lesssim & \left(\frac{\sigsum + \sqrt{r}\sigmax}{(n\lambda_r(\Lambda))^{1/2}} + \frac{\sigsum\sigmax}{n^{1/2}\lambda_r(\Lambda)} \right)\wedge 1 + C\exp(-cn)\\
					\lesssim & \left(\frac{\sigsum + \sqrt{r}\sigmax}{(n\lambda_r(\Lambda))^{1/2}} + \frac{\sigsum\sigmax}{n^{1/2}\lambda_r(\Lambda)} \right)\wedge 1.
				\end{split}
			\end{equation*}
		\end{enumerate}
		The last inequality is due to the assumption that $\lambda_r(\Lambda)\geq c\exp(-cn)$. Therefore, we have finished the proof of this theorem.
	\end{proof}
	
	\subsection{Proof of \prettyref{th:diagonal-less}}\label{sec:proof-robust-sin-theta}
	In this subsection we prove a more general version of \prettyref{th:diagonal-less}, where the corrupted entries lie in a known set $\mathcal{G} \subset [p]\times [p]$ which need not be the diagonal. Recall the model \prettyref{eq:model-1}, where we observe a symmetric $p\times p$ matrix $N = M +Z$, where $M$ is a rank-$r$ matrix of interest and $Z$ is the perturbation. Our goal is to estimate $U\in\mathbb{O}_{p, r}$, consisting of the eigenvectors of $M$. 
	Extending the ideas of \prettyref{al:heteroPCA} for HeteroPCA, \prettyref{al:heteroPCA-general} provides a robust estimate of $U$ which iteratively impute the values in the corrupted entries in $\calG$. 
	In the special case where $\mathcal{G}$ is the diagonal, i.e., $\mathcal{G} = \{(i,i): 1\leq i \leq p\}$, Algorithm \ref{al:heteroPCA-general} reduces to Algorithm \ref{al:heteroPCA}.
	\begin{algorithm}
		\caption{Generalized HeteroPCA}
		\begin{algorithmic}[1]
			\State Input: matrix $\widehat{\Sigma}$, rank $r$, number of iterations $T$, corruption subset $\mathcal{G}\subseteq [p]\times [p]$.
			\State Set $N^{(0)} = \Gamma(N)$.
			\For{$t=1,\ldots, T$}
			\State Calculate SVD: $N^{(t)} = \sum_i \lambda_i^{(t)}u_i^{(t)}(v_i^{(t)})^\top$, where $\lambda_1^{(t)}\geq \lambda_2^{(t)}\cdots \geq 0$. 
			\State Let $\widetilde{N}^{(t)} = \sum_{i=1}^r\lambda_{i}^{(t)} u_{i}^{(t)}(v_{i}^{(t)})^\top$.
			\State Update corrupted entries: $N^{(t+1)} = G(\widetilde{N}^{(t)}) + \Gamma(\widehat{\Sigma})$.
			\EndFor
			\State Output: $\widehat{U} = U^{(T)} = [u_1^{(T)} ~ \cdots ~ u_r^{(T)}]$.
		\end{algorithmic}
		\label{al:heteroPCA-general}
	\end{algorithm}
	
	Next we give a performance guarantee for \prettyref{al:heteroPCA-general}.
	For any $H\in \mathbb{R}^{p\times p}$, let $G(H)$ be the matrix $H$ with all entries but those in $\mathcal{G}$ set to zero and $\Gamma(H) = H - G(H)$. 
	Define
	\begin{equation}
		\eta = \max_{\substack{H\in \mathbb{R}^{m\times m}, \rank(H)\leq 2r}} \|G(H)\|/\|H\|,
		\label{eq:eta}
	\end{equation}
	which essentially measures the maximum perturbations due to the entries in $\mathcal{G}$ on the singular subspace. We also assume that the set of corrupted entries	$\mathcal{G}$ 
	is $b$-sparse in the sense that
	$$\max_i|\left\{j: (i, j)\in \mathcal{G} \right\}| \vee  \max_j |\left\{i: (i, j)\in \mathcal{G}\right\}| \leq b,$$ 
	i.e., the number of corrupted entries in each row and each column is at most $b$. To overcome the ``spiky" issue discussed in Remark \ref{rm:spiky}, we again assume the incoherence condition \eqref{ineq:incoherence-constant-general-d-k}. We have the following theoretical results for Algorithm \ref{al:heteroPCA-general}.
	\begin{Theorem}[General robust $\sin\Theta$ theorem]\label{th:robust-Davis-Kahan}
		Assume $\mathcal{G}\in [p]\times [p]$ is $b$-sparse. Suppose one observes the symmetric matrix $N = M+Z$, where $\rank(M) = r$, $Z$ is any symmetric perturbation, and the eigenvectors of $M$ are $U\in \mathbb{O}_{p, r}$. Let $\widehat{U}^{(t)}$ be the intermediate matrix in Algorithm \ref{al:heteroPCA} with $t$ iterations. There exists a constant $c>0$ such that if 
		the incoherence condition	
		\begin{equation}
			\label{ineq:incoherence-constant-general-d-k}
			\frac{I(U) \|M\|}{\lambda_r(M)} 
			\leq \frac{c p}{\eta br(b\wedge r)}
		\end{equation}
		is satisfied and $\eta\|\Gamma(Z)\|\leq c \lambda_r(M)$, then 
		\begin{equation}	\label{eq:robust-Davis-Kahan-intermediate}
			\left\|\sin\Theta(\widehat{U}^{(t)}, U)\right\| \leq 4\|\Gamma(Z)\| /\lambda_r(M) + 2^{-(t+3)}/\eta.
		\end{equation}
		Here, $\eta$ is defined in \eqref{eq:eta}. In particular, if $T = \Omega(\log\frac{\lambda_r(M)}{\eta\|\Gamma(Z)\|}\vee 1)$, the final outcome $\widehat{U}$ of Algorithm \ref{al:heteroPCA} with corrupted index set $\mathcal{G}$ satisfies
		\begin{equation}
			\left\|\sin\Theta(\widehat{U}, U)\right\| \lesssim \frac{\|\Gamma(Z)\|}{\lambda_r(M)}\wedge 1.
			\label{eq:robust-Davis-Kahan}
		\end{equation}	
	\end{Theorem}
	\begin{Remark}\rm 
		Though calculating the exact value of $\eta$ can be difficult in general, Lemma \ref{lm:diagonal-less-spectral-norm} in the supplement shows $\eta \leq \sqrt{b\wedge (2r)}$ for all $b$-sparse $\mathcal{G}$.
	\end{Remark}

	\begin{proof}[Proofs of Theorem \ref{th:robust-Davis-Kahan}]
		To characterize how the proposed procedure refines the estimation by initialization and iterations, we define $T_0 = \|\Gamma(N -M)\|=\|\Gamma (Z)\|$ and $K_t = \|N^{(t)} - M\|$ for $t = 0,1,\ldots$. Since $H = \Gamma(H) + G(H)$, we have $\|H\|\leq \|G(H)\|+\|\Gamma(H)\|$ for any matrix $H\in \mathbb{R}^{p\times p}$.
		\begin{itemize}[leftmargin=*]
			\item[Step 1.] We first analyze the initial error $K_0 = \|N^{(0)}-M\|$. By definition, $N^{(0)} = \Gamma(N)$. To better align $\Gamma(N)$ with $M$, we decompose $M = \Gamma(M)+G(M)$. Since the singular subspace of $M$ aligns with $U$, we have $M = P_U M P_U$. Thus,
			\begin{equation*}
				\begin{split}
					K_0 = & \|N^{(0)} - M\| = \|\Gamma(N-M) - G(M)\| \\
					\leq & \|\Gamma(N - M)\| + \|G(M)\| = \|\Gamma(Z)\| + \|G(P_UMP_U)\|\\
					\overset{(a)}{\leq} & \|\Gamma(Z)\| + \frac{I(U)rb}{p}\|M\| = T_0 + \frac{I(U)rb}{p}\|M\|.
				\end{split}
			\end{equation*}
			Here, (a) is due to the contraction property of the map $G(P_U\cdot)$ in \text{Lemma \ref{lm:diagonal-projection-condense}}. Provided that $\frac{I(U)rb}{p}\|M\| \leq \lambda_r(M)/(16\eta)$ in the assumption, we have 
			\begin{equation}\label{ineq:K_0}
				K_0 \leq T_0 + \lambda_r(M)/(16\eta).
			\end{equation}
			\item[Step 2.] Next, we analyze the evolution of iterations by establishing an upper bound for $\|N^{(t)}-M\|$ based on $\|N^{(t-1)}-M\|$. By definition, $$N^{(t)}-M=G(N^{(t)}-M)+\Gamma(N^{(t)}-M).$$ Since the entries indexed by $\mathcal{G}^c$ in $N^{(t)}$ do not change through iterations, $\|\Gamma(N^{(t)}-M)\|=\|\Gamma(N - M)\|$, which can be bounded by $T_0$. The analysis for $G(N^{(t)}-M)$ is more complicated. By definition, the entries indexed by $\mathcal{G}$ in $N^{(t)}$ is the same as the ones in $\tilde{N}^{(t-1)} = P_{U^{(t-1)}}N^{(t-1)}$. To align $M$ with $P_{U^{(t-1)}}N^{(t-1)}$, we decompose $M = P_{U^{(t-1)}}M + P_{U^{(t-1)}_\perp}M$. Thus, 
			\begin{equation*}
				\begin{split}
					G(N^{(t)}-M) = & G\left(P_{U^{(t-1)}}N^{(t-1)} - P_{U^{(t-1)}}M - P_{U^{(t-1)}_{\perp}}M\right)\\ 
					= & G\left(P_{U^{(t-1)}}(N^{(t-1)} - M)\right) - G\left(P_{U^{(t-1)}_{\perp}}M\right).
				\end{split}
			\end{equation*}
			It is still difficult to analyze $G(P_{U^{(t-1)}}(N^{(t-1)} - M))$ due to the complicated connection between $P_{U^{(t-1)}}$ and $N^{(t-1)} - M$. Thus, we decouple them by introducing $P_{U^{(t-1)}} = P_{U} + (P_{U^{(t-1)}}-P_{U})$. Then, we have decomposed $G(N^{(t)}-M)$ into the following three terms:
			\begin{equation}\label{eq:decompose-G(N^{(t)}-M)}
				\begin{split}
					G(N^{(t)}-M) = & G(P_U(N^{(t-1)}-M)) - G\left(P_{U^{(t-1)}_{\perp}}M\right)\\
					& + G\left((P_{U^{(t-1)}} - P_{U})\cdot (N^{(t-1)} - M)\right).
				\end{split}
			\end{equation}
			Next, we bound these three terms separately. In particular, the upper bound of $\|G(P_U(N^{(t-1)}-M))\|$ and $\left\|G(P_{U^{(t-1)}_\perp}M)\right\|$ can be achieved by the application of  the contraction property for the map $G(P_U \cdot)$ in Lemma \ref{lm:diagonal-projection-condense}; to prove an upper bound for $\|G\left((P_{U^{(t-1)}} - P_{U}) \cdot (N^{(t-1)} - M)\right)\|$, we apply the property of $\sin\Theta$ distance to relate $\|(P_{U^{(t-1)}} - P_{U})\|$ to $\frac{\|N^{(t-1)}-M\|}{\lambda_r(M)}$. 
			The detailed proofs are provided as follows.
			\begin{itemize}[leftmargin=*]
				\item By Lemma \ref{lm:diagonal-projection-condense}, 
				\begin{equation}\label{ineq:com-upper-intermediate-3}
					\begin{split}
						& \left\|G\left(P_U(N^{(t-1)}-M)\right)\right\| \leq \sqrt{\frac{I(U) r b(b\wedge r)}{p}} \left\|N^{(t-1)} - M\right\| \\
						= & \sqrt{\frac{I(U)rb(b\wedge r)}{p}}K_{t-1}.
					\end{split}
				\end{equation}
				\item By Lemmas \ref{lm:diagonal-projection-condense} and \ref{lm:P_hat_U M}, 
				\begin{equation}\label{ineq:com-upper-intermediate-5}
					\begin{split}
						& \left\|G(P_{U^{(t-1)}_\perp}M)\right\| = \left\|G(P_{U^{(t-1)}_{\perp}}M P_U)\right\| \leq \sqrt{\frac{I(U)rb(b\wedge r)}{p}}\left\|P_{U^{(t-1)}_\perp}M\right\|\\
						\leq & 2 \sqrt{\frac{I(U)rb(b\wedge r)}{p}}\left\|N^{(t-1)} - M\right\| = 2 \sqrt{\frac{I(U)rb(b\wedge r)}{p}}K_{t-1}.
					\end{split}
				\end{equation}
				\item Note that $U^{(t-1)}(U^{(t-1)})^\top$ and $UU^\top$ are both positive semi-definite and $\|U^{(t-1)}(U^{(t-1)})^\top\|\vee \|UU^\top\|\leq1$, we have $\|U^{(t-1)}(U^{(t-1)})^\top - UU^\top\|\leq 1$. By Lemma 1 in \cite{cai2018rate}, 
				\begin{equation*}
					\begin{split}
						\left\|U^{(t-1)}(U^{(t-1)})^\top - UU^\top\right\| \leq & 2\|\sin\Theta(U^{(t-1)}, U)\|\wedge 1 = 2\|(U^{(t-1)}_{\perp})^\top U\|\wedge 1\\ 
						\leq & \left(2\left\|(U^{(t-1)}_\perp)^\top U U^\top M\right\| \cdot \lambda_{\min}^{-1}(U^\top M)\right) \wedge 1 \\
						\leq & \left(2\left\|(U^{(t-1)}_\perp)^\top M\right\| \cdot \lambda_r^{-1}(M)\right)\wedge 1\\ 
						\leq &  \left(\frac{4\|N^{(t-1)} - M\|}{\lambda_r(M)}\right)\wedge 1 = \frac{4K_{t-1}}{\lambda_r(M)}\wedge 1,
					\end{split}
				\end{equation*}
				where the penultimate step follows from Lemma \ref{lm:P_hat_U M}.
				Note that 
				\begin{equation*}
					\begin{split}
						& \rank((P_{U^{(t-1)}} - P_U)(N^{(t-1)} - M)) \leq \rank(P_{U^{(t-1)}} - P_U) \\
						\leq & \rank(P_{U^{(t-1)}}) + \rank(P_{U}) \leq 2r,
					\end{split}
				\end{equation*}
				we have
				\begin{equation}\label{ineq:com-upper-intermediate-4}
					\begin{split}
						& \left\|G\left((P_{U^{(t-1)}} - P_{U}) \cdot (N^{(t-1)} - M)\right)\right\|\\
						\leq & \eta\cdot \|P_{U^{(t-1)}} - P_{U}\|\cdot \|N^{(t-1)} - M\|\leq \eta K_{t-1} \cdot \left(\frac{4K_{t-1}}{\lambda_r(M)} \wedge 1\right).
					\end{split}
				\end{equation}
			\end{itemize}
			Combining \eqref{eq:decompose-G(N^{(t)}-M)}--\eqref{ineq:com-upper-intermediate-4}, we have for all $t \geq 1$,
			\begin{align}
				K_t \leq & \|\Gamma(N^{(t)}-M)\| + \|G(N^{(t)}-M)\| \nonumber \\
				\leq & T_0 + 3 \sqrt{\frac{I(U) r b(b\wedge r)}{p}} K_{t-1} +  \frac{4\eta}{\lambda_r(M)} K_{t-1}^2.
				\label{eq:Ktt}
			\end{align}
			\item[Step 3.] Finally, we use induction to show that for all $t\geq 0$,
			\begin{equation}\label{ineq:induction-K_t}
				K_t \leq 2T_0 + 2^{-(t+4)} \lambda_r(M)/\eta.
			\end{equation}
			The base case of $t= 0$ is proved by \eqref{ineq:K_0}. Next, suppose the statement \eqref{ineq:induction-K_t} holds for $t-1$. Then
			\begin{equation*}
				\begin{split}
					K_t 
					\stepa{\leq} & T_0 + 3\sqrt{\frac{I(U) r b(b\wedge r)}{p}}K_{t-1} + \frac{4\eta}{\lambda_r(M)} K_{t-1}^2\\
					\stepb{\leq} & T_0 + \frac{K_{t-1}}{4} + K_{t-1} \left(\frac{8\eta T_0}{\lambda_r(M)}  + \frac{1}{4}\right) \\
					\stepc{\leq} & T_0 + \frac{K_{t-1}}{2} \stepd{\leq} T_0 + \frac{2T_0 + \lambda_r(M)\cdot (1/2)^{(t-1)+4}/\eta}{2}\\
					= & 2T_0 + \lambda_r(M)\cdot (1/2)^{t+4}/\eta,
				\end{split}
			\end{equation*}
			where (a) is \prettyref{eq:Ktt};
			(b) is due to the assumption $144I(U) r b (b\wedge r) \leq p$ and the induction hypothesis;
			(c) is from the assumption $T_0 \leq \lambda_r(M)/(64\eta)$;
			(d) is by the induction hypothesis. 
		\end{itemize}
		Therefore, for all $t \geq \Omega(\log \frac{\lambda_r(M)}{T_0\eta} \vee 1) = \Omega(\log\frac{\lambda_r(M)}{\eta\|\Gamma(Z)\|} \vee 1)$, we have $K_t \leq 3T_0$. Finally, the desired \prettyref{eq:robust-Davis-Kahan-intermediate} \prettyref{eq:robust-Davis-Kahan} follow from $\sin\Theta$ perturbation bound \citep[Theorem 5, $q=\infty$]{luo2020schatten}. For completeness, we still provide a proof here. Let $M = US V^\top$ be the singular value decomposition of $M$, where $S$ is diagonal and $U, V\in \mathbb{O}_{p, r}$. Then,
		\begin{equation*}
			\begin{split}
				\|\sin\Theta(\hat{U}^{(t)}, U)\| = & \|(\hat{U}_{\perp}^{(t)})^\top U\| \overset{(a)}{\leq} \frac{\|(\hat{U}_\perp)^\top U S V^\top\|}{\lambda_{r}(S V^\top)} \overset{(b)}{=} \frac{\|(\hat{U}_\perp)^\top M\|}{\lambda_r(USV^\top)}\\ \leq &  \frac{\|(\hat{U}_\perp)^\top(M+N^{(t)}-M)\|+\|(\hat{U}_\perp)^\top(N^{(t)} - M)\|}{\lambda_r(M)}\\
				\overset{(c)}{\leq} & \frac{\|\lambda_{r+1}(N^{(t)})\| + \|N^{(t)}-M\|}{\lambda_r(M)} \overset{(d)}{\leq} \frac{\min_{\rank(T)\leq r}\|N^{(t)} - T\| + K_t}{\lambda_r(M)} \\
				\leq & \frac{\|N^{(t)} - M\| + K_t}{\lambda_r(M)} = \frac{2K_t}{\lambda_r(M)} = \frac{4T_0}{\lambda_r(M)} + (1/2)^{t+3}/\eta.
			\end{split}
		\end{equation*}
		Here, (a) holds because for any matrices $A\in \mathbb{R}^{p\times r}, B\in\mathbb{R}^{r\times p}$, by defining $x_* = \argmax_{\|x\|_2=1} \|x^\top A\| = \|A\|$, then 
		$$\|AB\| = \sup_{\|x\|_2=1} \|x^\top AB\|_2 \geq \|(x_*^\top A) B\|_2 \geq \|x_*^\top A\|_2 \lambda_r(B) = \|A\|\lambda_r(B);$$ 
		(b) holds because $U$ has orthonormal columns; (c) is because $(\hat{U})_\perp$ correspond to the $(r+1)$st, ..., $p$th singular vector of $N^{(t)}$; (d) is due to Eckart-Young-Mirsky Theorem.\footnote{See a proof in \url{https://en.wikipedia.org/wiki/Low-rank_approximation}} Therefore, we have finished the proof of this theorem.
	\end{proof}
	\begin{Remark}\rm
		In fact, Theorem \ref{th:robust-Davis-Kahan} implies Theorem \ref{th:diagonal-less}. To see this, note that if the corruption set $\mathcal{G}$ is the diagonal, $\mathcal{G} = \{(i,i): 1\leq i \leq p\}$, we have
		$$b = \max_i\left\{j: (i, j)\in \mathcal{G}\right\}\vee \max_j\left\{i: (i, j)\in \mathcal{G}\right\} = 1,$$ 
		$$\eta = \max_M \|D(M)\|/\|M\| = \max_M\max_i \frac{|M_{ii}|}{\|M\|} = 1.$$
	\end{Remark}
	
	The next Lemma \ref{lm:diagonal-projection-condense} provides an important technical tool for the proof of robust $\sin\Theta$ theorem. It essentially shows that the operator norm of the composition of linear maps $G(P_U\cdot)$ is much smaller than the product of individual operator norms $\|G(\cdot)\|$ and $\|P_U\|$, provided that the basis $U$ is incoherent; 
	the same conclusion also applies to $G(\cdot P_V)$.
	
	\begin{Lemma}\label{lm:diagonal-projection-condense}
		Assume $\mathcal{G}\subseteq [m_1]\times [m_2]$ is $b$-sparse, i.e., $\max_j\{i:(i,j)\in \mathcal{G}\}\vee\max_i\{j:(i,j)\in \mathcal{G}\}\leq b$. Suppose $U\in \mathbb{O}_{m_1, r}$ and $V\in \mathbb{O}_{m_2, r}$. Recall that $G(A)$ is the matrix $A$ with all entries in $\mathcal{G}^c$ set to zero, $I(U) = \frac{m_1}{r}\max_i \|e_i^\top U\|_2^2$, $I(V) = \frac{m_2}{r}\max_i \|e_i^\top V\|_2^2$, $P_U = UU^\top$, and $P_V = VV^\top$. Then for any matrix $A\in \mathbb{R}^{p_1\times p_2}$, we have
		$$\|G(P_UA )\| \leq \sqrt{\frac{I(U) r b(b\wedge r)}{m_1}}\|A\|, \quad \|G( AP_V)\|\leq \sqrt{\frac{I(V) r b(b\wedge r)}{m_2}}\|A\|,$$
		$$\text{and}\quad \|G(P_U AP_V)\|\leq \frac{\sqrt{I(U)I(V)}\cdot r b}{\sqrt{m_1m_2}}\|A\|.$$
		In particular, recall that $D(A)$ is the matrix $A$ with all off-diagonal entries set to zero. Suppose $U\in \mathbb{O}_{m, r}$. Then for any matrix $A\in \mathbb{R}^{m\times m}$, 
		$$\|D(P_U(D(A)))\| \leq \frac{I(U) r}{m}\|D(A)\|, \quad \|D(P_U A)\| \leq \sqrt{\frac{I(U) r}{m}}\|A\|.$$
	\end{Lemma}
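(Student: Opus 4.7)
The lemma has three main bounds plus two diagonal specializations. The plan is to prove the first bound $\|G(P_UA)\|\le\sqrt{I(U)\,rb(b\wedge r)/m_1}\,\|A\|$ by splitting into two regimes whose combination produces the $b\wedge r$ factor. Write $P_UA=U(U^\top A)$, so that $(P_UA)_{ij}=u_i^\top b_j$ with $u_i=U^\top e_i\in\mathbb{R}^r$ and $b_j=U^\top Ae_j\in\mathbb{R}^r$; then $\|G(P_UA)\|$ equals the supremum of the bilinear form $\sum_{(i,j)\in\mathcal{G}}x_iy_j\,u_i^\top b_j$ over unit $x,y$. In the regime $b\le r$, I apply Cauchy--Schwarz to the $|\mathcal{G}|$ inner products $\langle x_iu_i,\,y_jb_j\rangle$: the row-$b$-sparsity gives $\sum_{(i,j)\in\mathcal{G}}\|x_iu_i\|^2\le b\max_i\|u_i\|^2=bI(U)r/m_1$, and the column-$b$-sparsity with $\|b_j\|\le\|A\|$ gives $\sum_{(i,j)\in\mathcal{G}}\|y_jb_j\|^2\le b\|A\|^2$, which combines to the claim. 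In the regime $b>r$, I switch to a Frobenius estimate that exploits $\rank(P_UA)\le r$: $\|G(P_UA)\|^2\le\|G(P_UA)\|_F^2\le\max_i\|u_i\|^2\cdot b\sum_j\|b_j\|^2=bI(U)r/m_1\cdot\|U^\top A\|_F^2\le bI(U)r^2/m_1\cdot\|A\|^2$, which matches the claim since $b\wedge r=r$. The bound on $\|G(AP_V)\|$ follows by transposition, using that $G$ and its transposed pattern share the same sparsity parameter $b$.

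For $\|G(P_UAP_V)\|$, write $(P_UAP_V)_{ij}=u_i^\top Bv_j$ with $B=U^\top AV$ (so $\|B\|\le\|A\|$) and $v_j=V^\top e_j$. I apply the same Cauchy--Schwarz-on-pairs step to $x_iu_i$ and $y_jBv_j$: the first sum is again bounded by $bI(U)r/m_1$, while the second becomes $\sum_{(i,j)\in\mathcal{G}}\|y_jBv_j\|^2\le b\|B\|^2\max_j\|v_j\|^2\le b\|A\|^2 I(V)r/m_2$. Taking the geometric mean yields the advertised $(rb/\sqrt{m_1m_2})\sqrt{I(U)I(V)}\|A\|$. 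No $b\wedge r$ factor appears here because the two incoherence constants contribute symmetrically on both sides, so a single Cauchy--Schwarz is already tight and the rank-$r$ fallback is unnecessary.

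The two diagonal statements correspond to $\mathcal{G}=\{(i,i):1\le i\le m\}$, i.e.\ $b=1$. The bound $\|D(P_UA)\|\le\sqrt{I(U)r/m}\,\|A\|$ is the first inequality verbatim. For $\|D(P_U D(A))\|\le(I(U)r/m)\|D(A)\|$ I obtain a strictly sharper estimate by exploiting the identity $D(A)e_j=A_{jj}e_j$: then $(P_UD(A))_{ij}=A_{jj}u_i^\top u_j$, so $D(P_UD(A))$ is a diagonal matrix with $(i,i)$-entry $A_{ii}\|u_i\|^2$, and its operator norm equals $\max_i|A_{ii}|\cdot\max_i\|u_i\|^2=\|D(A)\|\cdot I(U)r/m$. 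The extra square root of saving compared to the generic bound is precisely the benefit of the diagonal structure of $D(A)$, which removes the row/column interaction in the inner Cauchy--Schwarz.

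The main technical obstacle, though mild, is producing the tight $b\wedge r$ factor in the first inequality: a single naive Cauchy--Schwarz yields $b\sqrt{I(U)r/m_1}\,\|A\|$, which is wasteful when $b\gg r$, and the rank-$r$ Frobenius estimate is needed to patch the large-$b$ regime. Writing the combined bound as $\sqrt{I(U)rb(b\wedge r)/m_1}\,\|A\|$ is exactly the unification of the two regimes. Everything else in the lemma is an application of the same bilinear-form Cauchy--Schwarz trick.
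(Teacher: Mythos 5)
Your proposal is correct and follows essentially the same route as the paper's proof: a Cauchy--Schwarz argument exploiting $b$-sparsity and incoherence for the $b^2$ regime, a rank-$r$ Frobenius estimate for the $rb$ regime (their minimum giving $b(b\wedge r)$), a symmetric two-sided version for $G(P_UAP_V)$ (the paper uses $|u_iv_j|\le(u_i^2+v_j^2)/2$ where you use a vector-valued Cauchy--Schwarz, yielding the identical bound), and the same direct diagonal computation for $D(P_UD(A))$. The only nitpick is that the operator norm of $D(P_UD(A))$ is $\max_i|A_{ii}|\,\|u_i\|_2^2$, which is bounded by (not equal to) $\max_i|A_{ii}|\cdot\max_i\|u_i\|_2^2$ — the inequality goes the right way, so nothing breaks.
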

	\begin{proof}[Proof of Lemma \ref{lm:diagonal-projection-condense}]
		\begin{equation*}
			\begin{split}
				\|G(P_UA)\| = & \max_{\|v\|_2 = 1} \|v^\top G(UU^\top A)\|_2 = \max_{\|v\|_2 = 1} \left(\sum_{j=1}^{m_2} \left(v^\top [G(UU^\top A)]_{\cdot j}\right)^2\right)^{1/2}\\
				= & \max_{\|v\|_2 = 1} \left(\sum_{j=1}^{m_2} \left(\sum_{i: (i,j)\in \mathcal{G}}v_i (UU^\top A)_{i,j}\right)^2\right)^{1/2} \\
				\leq & \max_{\|v\|_2=1} \left(\sum_{j=1}^{m_2} \left(\sum_{i: (i,j)\in \mathcal{G}} v_i^2\right) \left(\sum_{i: (i, j)\in \mathcal{G}} (UU^\top A)_{i,j}^2\right)\right)^{1/2},
			\end{split}
		\end{equation*}
		where the inequality is due to Cauchy-Schwarz. Now, for any $1\leq j\leq m_2$, 
		\begin{equation*}
			\begin{split}
				\sum_{i: (i, j)\in \mathcal{G}} (UU^\top A)_{ij}^2 \leq & \sum_{i: (i, j)\in \mathcal{G}} \left(U_{i\cdot}\cdot (U^\top A)_{\cdot j}\right)^2 = \sum_{i: (i, j)\in \mathcal{G}} \left\|U_{i\cdot}\right\|_2^2\cdot \left\|(U^\top A)_{\cdot j}\right\|_2^2 \\
				\leq & \sum_{i: (i, j)\in \mathcal{G}} \frac{I(U)r}{m_1} \|A\|^2 \leq \frac{I(U)rb}{m_1}\|A\|^2.
			\end{split}
		\end{equation*}
		Thus,
		\begin{equation*}
			\begin{split}
				& \|G(P_UA)\| \leq \sqrt{\frac{I(U) rb}{m_1}}\|A\|\max_{\|v\|_2=1}\left(\sum_{j=1}^{m_2}\left(\sum_{i: (i,j)\in \mathcal{G}} v_i^2\right)\right)^{1/2} \\
				= & \sqrt{\frac{I(U) rb}{m_1}}\|A\| \cdot \max_{\|v\|_2=1}\left(\sum_{i=1}^{m_1} \sum_{j: (i,j)\in \mathcal{G}} v_i^2\right)^{1/2} \\
				\leq & \sqrt{\frac{I(U) rb}{m_1}}\|A\| \cdot \max_{\|v\|_2=1} \left(\sum_{i=1}^{m_1} v_i^2b\right)^{1/2} \leq \sqrt{\frac{I(U) r b^2}{m_1}} \|A\|.
			\end{split}
		\end{equation*}
		Additionally, since $\rank(A)\leq r$ and $\mathcal{G}$ is $b$-sparse,
		\begin{equation*}
			\begin{split}
				& \|G(P_UA)\|^2 \leq 	\|G(P_UA)\|_F^2= \sum_{i, j} \left(G(UU^\top A)\right)_{ij}^2 = \sum_{j=1}^{m_2} \sum_{i: (i, j)\in \mathcal{G}} \left(U_{i\cdot}(U^\top A)_{\cdot j}\right)^2\\
				\leq & \sum_{j=1}^{m_2} \sum_{i: (i, j)\in \mathcal{G}} \|U_{i\cdot}\|_2^2 \cdot \|U^\top A_{\cdot j}\|_2^2 \leq \sum_{j=1}^{m_2} \frac{I(U)rb}{m_1} \cdot \|U^\top A_{\cdot j}\|_2^2\\
				= & \frac{I(U)rb}{m_1} \cdot \|U^\top A\|_F^2 \leq \frac{I(U)rb}{m_1}\cdot r\|U^\top A\|^2 \leq \frac{I(U)r^2b}{m_1} \|A\|^2.
			\end{split}
		\end{equation*}
		Combining previous two inequalities, we have
		$$\|G(P_UA)\| \leq \sqrt{I(U)rb(r\wedge b)/m_1}\|A\|.$$
		The proof for $\|G(AP_V)\|\leq \sqrt{I(V) r b(b\wedge r)/m_2} \|A\|$ similarly follows. Next, for any $u\in \mathbb{R}^{m_1}, v\in \mathbb{R}^{m_2}$ such that $\|u\|_2=\|v\|_2=1$, we have
		\begin{equation*}
			\begin{split}
				& u^\top G(P_UAP_V) v = u^\top G(UU^\top AVV^\top) v = \sum_{(i, j)\in \mathcal{G}} u_iv_j \left[UU^\top A VV^\top \right]_{ij}\\
				\leq & \sum_{(i, j)\in \mathcal{G}} |u_iv_j| \|U_{i\cdot}\|_2\cdot \|U^\top AV\|\cdot \|V_{j\cdot}\|_2  \leq \sum_{(i,j)\in \mathcal{G}}|u_iv_j|\cdot \sqrt{\frac{rI(U)}{m_1}}\cdot \|A\|\cdot \sqrt{\frac{rI(V)}{m_2}}\\
				\leq & \frac{\sqrt{I(U)I(V)}r \|A\|}{\sqrt{m_1m_2}} \sum_{(i,j)\in \mathcal{G}}\frac{u_i^2 + v_j^2}{2}\leq \frac{\sqrt{I(U)I(V)}r\|A\|}{\sqrt{m_1m_2}} \left(\sum_i\sum_{j: (i,j)\in \mathcal{G}}\frac{u_i^2}{2} + \sum_j\sum_{i: (i,j)\in \mathcal{G}}\frac{v_j^2}{2}\right)\\
				\leq & \frac{br\sqrt{I(U)I(V)}}{\sqrt{m_1m_2}}\|A\|,
			\end{split}
		\end{equation*}
		which means $\|G(P_UAP_V)\|\leq br\sqrt{I(U)I(V)/(m_1m_2)}\|A\|.$
		
		For the diagonal operator $D(\cdot)$, since $D(A)$ is a diagonal matrix, we have $D(A)e_i = D(A)_{ii} e_i$ and
		\begin{equation*}
			\begin{split}
				& \|D(P_U(D(A)))\| = \max_{i}\left|\left\{P_U(D(A))\right\}_{ii}\right| = \max_i \left|e_i^\top P_U D(A) e_i \right| \\
				= & \max_i \left|e_i^\top P_U e_i\cdot A_{ii}\right|
				= \max_{i}\|U^\top e_i\|_2^2 \cdot |A_{ii}| \leq \frac{I(U) r}{m}\|D(A)\|,
			\end{split}
		\end{equation*}
		\begin{equation*}
			\begin{split}
				\|D(P_U(A))\| = & \max_i \left|(P_U A)_{ii}\right| = \max_{i}\left|e_i^\top UU^\top Ae_i\right| \\
				\leq & \|e_i^\top U\|_2\cdot \|A\| \leq \sqrt{\frac{I(U) r}{m}}\|A\|.
			\end{split}
		\end{equation*}
	\end{proof}
\end{sloppypar}

\bibliographystyle{imsart-number}
\bibliography{reference}

\newpage
\appendix

\setcounter{page}{1}
\setcounter{section}{0}

\begin{center}
	{\LARGE Supplement to ``Heteroskedastic PCA: Algorithm,}
	
	\medskip 
	
	{\LARGE Optimality, and Applications"
	}
	
	\bigskip
	
	{\large Anru R. Zhang, ~ T. Tony Cai, ~ and ~ Yihong Wu}
	
\end{center}

\begin{abstract}
	In this supplement, we provide additional proofs of the main theorems and the key technical tools for the main technical results.
\end{abstract}

\section{Additional Discussion and Lower Bound on Robust $\sin\Theta$ Theorem}\label{sec:addition-sin-theta}

\begin{Remark}\label{rm:spiky}\rm 
	In the robust $\sin\Theta$ theorem (Theorem \ref{th:diagonal-less}), we introduce the incoherence condition \eqref{ineq:incoherence-constant-dl-d-k} here to avoid those $M$ that are too ``spiky". For example, consider $M_1 = e_1e_1^\top$ and $M_2=e_2e_2^\top$. Then $\Delta(M_1) = \Delta(M_2)$ and there is no way to distinguish these two spiky matrices if one only has reliable off-diagonal observations. Similar conditions, such as the ``delocalized condition," appear in recent work on PCA from noisy and linearly reduced data \citep{dobriban2016pca}. The incoherence condition has been widely used in the matrix completion literature (e.g., \cite[Assumption A0]{recht2011simpler}), where $I(U) \leq \mu_0$ is often assumed for some constant $\mu_0$ independent of $p$. In comparison, in view of the trivial bound
	$$I(U) = \frac{p}{r}\max_{1\leq i\leq p}\|e_i^\top U\|_2^2 \leq \frac{p}{r}\cdot 1,$$
	our assumption $I(U)< c_Ip/r$ is much looser than those that are prevalent in the matrix completion literature.
	
\end{Remark}

The following lower bound shows that bounds for both the incoherence condition \eqref{ineq:incoherence-constant-dl-d-k} and the estimation error \eqref{ineq:dl-d-k-upper-bound} are rate-optimal.
\begin{Proposition}[Robust $\sin \Theta$ theorem: lower bound]\label{th:lower-deterministic}
	Define the following collection of pairs of signal and perturbation matrices:
	\begin{equation}
		\begin{split}
			\mathcal{D}_{p, r}(\nu,\delta, t) = \left\{(M, Z): \begin{array}{ll}
				M = U\Lambda U^\top, U\in \mathbb{O}_{p, r}, \\
				I(U) \|M\|/\lambda_r(M) \leq t, \\
				\|\Delta(Z)\|\leq \delta, \lambda_r(M) \geq \nu
			\end{array}\right\}.
		\end{split}
	\end{equation}
	Suppose $1\leq r\leq p/2, t\geq 4$, one observes $N = M +Z\in \mathbb{R}^{p\times p}$. Then 
	\begin{equation}
		\inf_{\widehat{U}} \sup_{(M, Z)\in \mathcal{D}_{p, r}(\nu,\delta, t)}\left\|\sin\Theta(\widehat{U}, U)\right\| \geq c\left(\frac{\delta}{\nu} \wedge 1\right).
	\end{equation}
	If the incoherence constraint, i.e., $I(U)\|M\|/\lambda_r(M)\leq t$, is weak in the sense that $t \geq p/r$, then
	\begin{equation}
		\inf_{\widehat{U}} \sup_{(M, Z)\in \mathcal{D}_{p, r}(\nu,\delta, t)}\left\|\sin\Theta(\widehat{U}, U)\right\| \geq 1/2.
	\end{equation}
\end{Proposition}

\begin{proof}[Proof of Proposition \ref{th:lower-deterministic}]
	We first develop the lower bound with the incoherence constraint. We first assume $\delta/\nu\leq 1/\sqrt{2}$. Let $d = 2\lfloor p/(2r)\rfloor$, $\alpha, \beta \in \mathbb{R}^{d}$ be unit vectors such that 
	$$\alpha = \frac{1}{\sqrt{d}}\left(1,\ldots, 1\right),\quad \beta = \frac{1}{\sqrt{d(1+\theta^2)}}\left(1+\theta ,\ldots, 1+\theta, 1-\theta, \ldots, 1-\theta\right).$$
	Clearly, $f(\theta)\triangleq \|\alpha\alpha^\top - \beta\beta^\top\| $ is a continuous function of $\theta$. One can verify that $f(0) = 0$; $f(1) = 1/\sqrt{2}$, then there exists $0\leq \theta \leq 1$ to ensure that 
	\begin{equation}\label{eq:alphalaphaT-betabetaT}
		\|\alpha \alpha^\top - \beta\beta^\top\| = \delta/\nu.
	\end{equation}
	Based on \eqref{eq:alphalaphaT-betabetaT}, we additionally construct
	\begin{equation}\label{eq:U^1-U^2}
		U^{(1)} = \begin{bmatrix}
			\alpha_1 I_r\\
			\vdots \\
			\alpha_d I_r\\
			0_{(p-rd), r}
		\end{bmatrix},\quad U^{(2)} = \begin{bmatrix}
			\beta_1 I_r\\
			\vdots \\
			\beta_d I_r\\
			0_{(p-rd), r}
		\end{bmatrix}.
	\end{equation}
	Here, $\frac{1}{\sqrt{d}}I_r$ is repeated for $d$ times in $U^{(1)}$; both $\frac{1+\theta}{\sqrt{d(1+\theta^2)}}I_r$ and $\frac{1-\theta}{\sqrt{d(1+\theta^2)}}I_r$ are repeated for $d/2$ times in $U^{(2)}$. Let $M^{(1)} = \nu U^{(1)}(U^{(1)})^\top, M^{(2)} = \nu U^{(2)}(U^{(2)})^\top$, $Z^{(1)} = \frac{1}{2}(M^{(2)} - M^{(1)}), Z^{(2)} = \frac{1}{2}(M^{(1)} - M^{(2)})$. By such the construction, $\lambda_r(M^{(1)}) = \lambda_r(M^{(2)}) = \nu$, $\|M^{(1)}\|/\lambda_r(M^{(1)})=\|M^{(2)}\|/\lambda_r(M^{(2)})=1$,
	$$I(U^{(1)}) = \frac{p}{r}\max_i\|e_i^\top U^{(1)}\|_2^2 \leq \frac{p}{rd} = \frac{p}{r\cdot 2\lfloor p/(2r)\rfloor} < \frac{p}{r\cdot2(p/(2r) -1)} \leq 2,$$
	\begin{equation*}
		\begin{split}
			I(U^{(2)}) = & \frac{p}{r}\max_i \|e_i^\top U^{(2)}\|_2^2 \leq \frac{p(1+\theta)^2}{r\cdot \left(d\left(1+\theta^2\right)\right)}\leq \frac{p}{r}\cdot \frac{2}{d} = \frac{p}{r}\cdot \frac{1}{\lfloor p/(2r) \rfloor}\\
			\leq & \left\{\begin{array}{ll}
				4\cdot 1 \leq 4, & \text{if } 2r\leq p \leq 4r;\\
				\frac{p}{r}\cdot \frac{1}{p/(2r)-1} = \frac{2p}{p-2r}\leq 4, & \text{if } 4r+1\leq p.
			\end{array}\right..
		\end{split}
	\end{equation*}

	\begin{equation*}
		\begin{split}
			\|\Delta(Z^{(1)})\| = & \|\Delta(Z^{(2)})\| \overset{\text{Lemma \ref{lm:diagonal-less-spectral-norm}}}{\leq} 2\|Z^{(2)}\| \leq 2\left\|\frac{1}{2}\left(M^{(2)} - M^{(1)}\right)\right\| \\
			= & \nu\left\|\alpha\alpha^\top - \beta\beta^\top \right\| = \delta,
		\end{split}
	\end{equation*}
	which means $(M^{(1)}, Z^{(1)}), (M^{(2)}, Z^{(2)}) \in \mathcal{D}_{p, r}(\nu,\delta, t)$ for $t \geq 4$. On the other hand, by \cite[Lemma 1]{cai2018rate}, 
	\begin{equation*}
		\begin{split}
			& \left\|\sin\Theta(U^{(1)}, U^{(2)})\right\| \geq \frac{1}{2}\|U^{(1)}(U^{(1)})^\top - U^{(2)}(U^{(2)})^\top \|\\
			\overset{\eqref{eq:U^1-U^2}}{=} & \frac{1}{2}\left\|\begin{bmatrix}
				(\alpha_1^2 - \beta_1^2)I_r & \cdots & (\alpha_1\alpha_d - \beta_1\beta_d)I_r\\
				\vdots & & \vdots\\
				(\alpha_d\alpha_1 - \beta_d\beta_1)I_r & \cdots & (\alpha_d^2 - \beta_d^2)I_r \\
			\end{bmatrix}\right\| \\
			= & \frac{1}{2}\|\alpha\alpha^\top - \beta\beta^\top\| = \delta/(2\nu).
		\end{split}
	\end{equation*}
	Given $M^{(1)} + Z^{(1)} = M^{(2)} + Z^{(2)}$, we have
	\begin{equation*}
		\begin{split}
			& \inf_{\widehat{U}} \sup_{(M, Z)\in \mathcal{D}_{p,r}(\nu,\delta,t)} \left\|\sin\Theta(\widehat{U}, U)\right\| \\
			\geq & \inf_{\widehat{U}} \sup_{(M, Z)\in \left\{(M^{(1)}, Z^{(1)}), (M^{(2)}, Z^{(2)})\right\}} \left\|\sin\Theta(\widehat{U}, U)\right\| \\
			\geq & \inf_{\widehat{U}} \frac{1}{2}\left( \left\|\sin\Theta(\widehat{U}, U^{(1)})\right\| + \left\|\sin\Theta(\widehat{U}, U^{(2)})\right\|\right) \geq \frac{1}{2}\left\|\sin\Theta(U^{(1)}, U^{(2)})\right\| = \frac{\delta}{4\nu}.
		\end{split}
	\end{equation*}
	Next, if $\delta/\nu \geq \sqrt{2}/2$, let $\delta_0 = \nu\cdot \sqrt{2}/2$. By the previous argument, one can show
	\begin{equation*}
		\inf_{\widehat{U}} \sup_{(M, Z)\in \mathcal{D}_{p,r}(\nu,\delta,t)} \left\|\sin\Theta(\widehat{U}, U)\right\| \geq \frac{\delta_0}{4\nu} = \frac{\sqrt{2}}{8} \geq \frac{\sqrt{2}}{8}\left(\frac{\delta}{\nu}\wedge 1\right).
	\end{equation*}
	In summary, we must have
	\begin{equation*}
		\inf_{\widehat{U}} \sup_{(M, Z)\in \mathcal{D}_{p,r}(\nu,\delta,t)} \left\|\sin\Theta(\widehat{U}, U)\right\| \geq  \frac{\sqrt{2}}{8}\left(\frac{\delta}{\nu}\wedge 1\right)
	\end{equation*}
	in the first scenario that $t\geq 4$.
	
	Then we consider the second part that $t\geq p/r$. Let
	\begin{equation*}
		U^{(1)} = \begin{bmatrix}
			I_r\\
			0_{(p-r)\times r}
		\end{bmatrix}, \quad U^{(2)} = \begin{bmatrix}
			0_{r\times r}\\
			I_r\\
			0_{(p-2r)\times r}
		\end{bmatrix}
	\end{equation*}
	be two orthogonal matrices, $M^{(1)} = \nu U^{(1)}(U^{(1)})^\top, M^{(2)} = \nu U^{(2)}(U^{(2)})^\top$, $Z^{(1)} = - M^{(1)}, Z^{(2)} = -M^{(2)}$. Then clearly, $M^{(1)} +Z^{(1)} = M^{(2)} + Z^{(2)}$, $\lambda_r(M^{(1)}) = \lambda_r(M^{(2)}) \geq \nu$, $\|\Delta(Z^{(1)})\| = \|\Delta(Z^{(2)})\|=0$,
	$$\|\sin\Theta(U^{(1)}, U^{(2)})\| = \left(1 - \lambda_r((U^{(1)})^\top U^{(2)})\right)^{1/2} = (1-0)^{1/2} = 1.$$ 
	Moreover, for any $t\geq p/r$,
	\begin{equation*}
		I(U^{(1)}) = \frac{p}{r}\|e_i^\top U^{(1)}\|_2^2 = \frac{p}{r} \leq t, \quad I(U^{(2)}) = \frac{p}{r}\|e_i^\top U^{(2)}\|_2^2 = \frac{p}{r} \leq t.
	\end{equation*}
	We thus have
	\begin{equation*}
		\left(M^{(1)}, Z^{(1)}\right), \left(M^{(2)}, Z^{(2)}\right)  \in \mathcal{D}_{p,r}(\nu, \delta,t)
	\end{equation*}
	if $t\geq p/r$. Given $M^{(1)} + Z^{(1)} = M^{(2)} + Z^{(2)}$, we have
	\begin{equation*}
		\begin{split}
			& \inf_{\widehat{U}} \sup_{(M, Z)\in \mathcal{D}_{p,r}(\nu,\delta,t)} \left\|\sin\Theta(\widehat{U}, U)\right\| \\
			\geq & \inf_{\widehat{U}} \sup_{(M, Z)\in \left\{(M^{(1)}, Z^{(1)}), (M^{(2)}, Z^{(2)})\right\}} \left\|\sin\Theta(\widehat{U}, U)\right\| \\
			\geq & \inf_{\widehat{U}} \frac{1}{2}\left( \left\|\sin\Theta(\widehat{U}, U^{(1)})\right\| + \left\|\sin\Theta(\widehat{U}, U^{(2)})\right\|\right) \geq \frac{1}{2}\left\|\sin\Theta(U^{(1)}, U^{(2)})\right\| = \frac{1}{2},
		\end{split}
	\end{equation*}
	which has finished the proof of this theorem. 
\end{proof}

\section{Additional Proofs}\label{sec:additional-proofs}

\subsection{Additional Proofs for Heteroskedastic PCA}

\begin{proof}[Proof of Theorem \ref{th:lower-bound-hetero-PCA}] 
	We only need to show the following two inequalities to prove this theorem,
	\begin{equation}\label{ineq:PCA-lower-1}
		\inf_{\widehat{U}}\sup_{\Sigma \in \mathcal{F}_{p, n, r}(\sigsumch, \sigmaxch, \nu, \kappa)}\mathbb{E}\left\|\sin\Theta(\widehat{U}, U)\right\| \gtrsim \left(\frac{\sigsumch}{\left(n\nu\right)^{1/2}} + \frac{\sigsumch\sigmaxch}{n^{1/2}\nu}\right) \wedge 1,
	\end{equation}
	\begin{equation}\label{ineq:PCA-lower-2}
		\inf_{\widehat{U}}\sup_{\Sigma \in \mathcal{F}_{p, n, r}(\sigsumch, \sigmaxch, \nu, \kappa)}\mathbb{E}\left\|\sin\Theta(\widehat{U}, U)\right\| \gtrsim \frac{\sqrt{r}\sigmaxch}{(n\nu)^{1/2}} \wedge 1. 
	\end{equation}
	We first consider \eqref{ineq:PCA-lower-1}. Since all parameters can be rescaled, we assume $\nu=1$ without loss of generality. The proof is divided into three steps.
	\begin{itemize}[leftmargin=*]
		\item[Step 1] In this step, we construct a series of ``candidate covariance matrices" and prove that they belong to the subset of covariance matrices in the theorem statement. Let 
		\begin{equation}\label{eq:def_d-L}
			d = \lfloor \sigsumch^2/(8\sigmaxch^2)\rfloor \vee 6, \quad L = 2\lceil 1/(dc_I)\rceil.
		\end{equation}
		Now, we impose the assumption that 
		\begin{equation}\label{ineq:p-lower-condition}
			p\geq 50 \vee \{2(r-1)(1+c_I)/c_I\} \vee \{8/c_I\}.
		\end{equation} 
		Since $\sigsumch \leq \sqrt{p} \sigmaxch$, we must have
		\begin{equation}\label{ineq:Ld<p/2}
			\begin{split}
				Ld \overset{\eqref{eq:def_d-L}}{=} & 2d\left\lceil \frac{1}{dc_I} \right\rceil < 2d \left(\frac{1}{dc_I} + 1\right) = \frac{2}{c_I} + 2\left(\left\lfloor\frac{\sigsumch^2}{8\sigmaxch^2} \right\rfloor\vee 6\right)\\ \overset{\eqref{ineq:p-lower-condition}}{\leq} & \frac{p}{4} + \frac{\sigsumch^2}{4\sigmaxch^2}\vee 12
				\overset{\eqref{ineq:p-lower-condition}}{\leq} \frac{p}{4} + \max\left\{\frac{p}{4}, \frac{p}{4}\right\} = \frac{p}{2}.
			\end{split}
		\end{equation}

		By Lemma \ref{lm:incoherence-orthogonoal-construction}, we can construct $Q\in \mathbb{O}_{(p-Ld), (r-1)}$ with small incoherence constant:
		\begin{equation}\label{ineq:e_i^topQ}
			\begin{split}
				& \max_i\|e_i^\top Q\|_2^2 \leq  \frac{1}{\lfloor\frac{p-Ld}{r-1}\rfloor} \leq \frac{1}{\frac{p-Ld}{r-1}-1} \\
				\overset{\eqref{ineq:Ld<p/2}}{\leq} & \frac{1}{\frac{p/2}{r-1}-1} \overset{\eqref{ineq:p-lower-condition}}{\leq} \frac{r-1}{(r-1)(1+c_I)/c_I-(r-1)} \leq c_I.
			\end{split}
		\end{equation}
		By the Varshamov-Gilbert bound \citep[Lemma 4.7]{massart2007concentration}, we can find series of vectors $v^{(1)},\ldots, v^{(N)} \subseteq \{-1, 1\}^d$ with $N \geq \exp(d/8)$, such that 
		\begin{equation}\label{ineq:PCA-lower-3}
			\|v^{(l)} - v^{(k)}\|_2^2 \geq d, \quad \text{for all } 1\leq k \neq l \leq N
		\end{equation}
		Next, we construct a series of candidate covariance matrices for $k=1,\ldots, N$,
		\begin{equation*}
			\begin{split}
				& U^{(k)} = \begin{bmatrix}
					u^{(k)} & 0_{(Ld)\times (r-1)}\\
					0_{(p-Ld)\times 1} & Q\\
				\end{bmatrix} \in \mathbb{R}^{p\times r}, \\
				& u^{(k)} = \begin{bmatrix}
					\frac{1}{\sqrt{Ld(1+\theta^2)}}\left(1_d+\theta v^{(k)}\right) \\
					\vdots\\
					\frac{1}{\sqrt{Ld(1+\theta^2)}}\left(1_d+\theta v^{(k)}\right) \\
					\frac{1}{\sqrt{Ld(1+\theta^2)}}\left(1_d-\theta v^{(k)}\right) \\
					\vdots\\
					\frac{1}{\sqrt{Ld(1+\theta^2)}}\left(1_d-\theta v^{(k)}\right)
				\end{bmatrix}\in \mathbb{R}^{Ld};
			\end{split}
		\end{equation*}
		\begin{equation*}
			\begin{split}
				& D_{ij} = \left\{
				\begin{array}{ll}
					\sigma_0^2, & 1 \leq i = j \leq Ld;\\
					0, & \text{otherwise},
				\end{array}\right. \quad \sigma_0^2 = \sigmaxch^2\wedge \{\sigsumch^2/(Ld)\}, \\
				& \Sigma^{(k)} = U^{(k)}(U^{(k)})^\top + D.
			\end{split}
		\end{equation*}
		Here, $0\leq \theta\leq 1$ is a constant to be specified later; both $\frac{1}{\sqrt{Ld(1+\theta^2)}}(1+\theta v^{(k)})$ and $\frac{1}{\sqrt{Ld(1+\theta^2)}}(1-\theta v^{(k)})$ are repeated for $(L/2)$ times in the first column of $U^{(k)}$. Then, all columns of $U^{(k)}$ are orthonormal and
		\begin{equation*}
			\begin{split}
				\max_{1\leq i\leq p}\|e_i^\top U^{(k)}\|_2^2 \leq & \max\left\{\frac{(1+\theta)^2}{Ld(1+\theta^2)}, \max_i\|e_i^\top Q\|_2^2\right\} \\
				\leq & \max\left\{\frac{2}{Ld}, \max_i\|e_i^\top Q\|_2^2\right\}  \overset{\eqref{eq:def_d-L}\eqref{ineq:e_i^topQ}}{\leq}  c_I.
			\end{split}
		\end{equation*}
		Then $U^{(k)}(U^{(k)})^\top$ satisfies the incoherence constraint of the class $\mathcal{F}_{p,n,r}(\sigsumch,\sigmaxch, \nu, \kappa)$, 
		$$I\left(U^{(k)}\right) = \frac{p}{r}\max_i\|e_i^\top U^{(k)}\|_2^2 \leq c_I p/r.$$
		In addition,
		\begin{equation*}
			\begin{split}
				& \max_{1\leq i\leq p}D_{ii} = \sigmaxch^2\wedge \{\sigsumch^2/(Ld)\} \leq \sigmaxch^2,\\ 
				& \sum_{i=1}^p D_{ii} = Ld \left(\sigmaxch^2\wedge \{\sigsumch^2/(Ld)\}\right) \leq \sigsumch^2,\\
				& \lambda_r\left(U^{(k)}(U^{(k)})^\top\right) = 1 = \nu.
			\end{split}
		\end{equation*}
		Therefore, $\Sigma^{(1)}, \ldots, \Sigma^{(N)}$ truly belongs to the class in the theorem statement:
		\begin{equation}\label{ineq:PCA-lower-belongs}
			\Sigma^{(1)}, \ldots, \Sigma^{(N)} \subseteq \mathcal{F}_{p, n, r}(\sigsumch, \sigmaxch, \nu, \kappa).
		\end{equation}
		\item[Step 2] Next for any $k\neq l$, we prove that $U^{(k)}, U^{(l)}$ are well-separated and the KL-divergence of $X^{(k)}$ and $U^{(l)}$ are bounded if $X^{(k)}\sim N(0, \Sigma^{(k)}), X^{(l)} \sim N(0, \Sigma^{(l)})$. Since $\sigsumch \geq \sigmaxch$, we have
		\begin{equation}\label{ineq:PCA-lower-tilde-sigma-0}
			\begin{split}
				\sigma_0^2 = & \sigmaxch^2\wedge \frac{\sigsumch^2}{Ld} \overset{\eqref{eq:def_d-L}}{\geq} \sigmaxch^2\wedge \frac{\sigsumch^2}{2d\lceil1/(dc_I)\rceil}  \geq \sigmaxch^2\wedge \frac{\sigsumch^2}{2d\left(\frac{1}{dc_I} + 1\right)}\\
				\geq & \sigmaxch^2\wedge \frac{\sigsumch^2}{\frac{2}{c_I} + 2\left(\lfloor\sigsumch^2/(8\sigmaxch^2)\rfloor\wedge 6\right)} \geq \sigmaxch^2 \wedge \frac{\sigsumch^2}{\frac{2}{c_I} + 12} \geq c\sigmaxch^2;\\
				d\sigma_0^2 \geq & cd\sigmaxch^2 = c\left(\lfloor\sigsumch^2/(8\sigmaxch^2)\rfloor\vee 6\right)\sigmaxch^2\geq c\left(\sigsumch^2/(16\sigmaxch^2)\right)\sigmaxch^2 \geq c'\sigsumch^2
			\end{split}
		\end{equation}
		for some constants $c, c'>0$ that only rely on $c_I$.
		
		By the definition of \eqref{ineq:PCA-lower-3}, we have for any $1\leq k \neq l \leq N$, 
		\begin{equation*}
			\begin{split}
				& \left\|\sin\Theta\left(U^{(k)}, U^{(l)}\right)\right\| = \left(1 - \lambda_r^2\left((U^{(k)})^\top U^{(l)}\right)\right)^{1/2} = \left(1 - \left(u^{(k)\top} u^{(l)}\right)^2\right)^{1/2}\\
				= & \left(1 - \frac{(L/2)^2}{L^2d^2(1+\theta^2)^2}\left((1_d+\theta v^{(k)})^\top(1_d+\theta v^{(l)}) + (1_d-\theta v^{(k)})^\top(1_d-\theta v^{(l)}) \right)^2\right)^{1/2}\\
				= & \left(1 - \frac{1}{4d^2(1+\theta^2)^2}\left(2d + 2\theta^2v^{(k)\top}v^{(l)}\right)^2\right)^{1/2} \\
				= & \left(1 - \left(\frac{1+\theta^2 (v^{(k)})^\top v^{(l)}/d}{1+\theta^2}\right)^2\right)^{1/2}.
			\end{split}
		\end{equation*}
		By \eqref{ineq:PCA-lower-3}, for any $k\neq l$, we have $d\leq \|v^{(k)}-v^{(l)}\|_2^2\leq 4d$ and
		\begin{equation*}
			\begin{split}
				(v^{(k)})^\top v^{(l)} = & \frac{1}{2}\left(\|v^{(k)}\|_2^2 + \|v^{(l)}\|_2^2 - \|v^{(k)} - v^{(l)}\|_2^2 \right) \\
				= & \frac{1}{2}\left(2d - \|v^{(k)}-v^{(l)}\|_2^2\right)\in \left[-d, d/2\right].
			\end{split}
		\end{equation*}
		Consequently,
		\begin{equation}\label{ineq:PCA-lower-4}
			\left(1 - \left(\frac{1+\theta^2/2}{1+\theta^2}\right)^2\right)^{1/2}\leq \left\|\sin\Theta(U^{(k)}, U^{(l)})\right\|\leq \left(1 - \left(\frac{1-\theta^2}{1+\theta^2}\right)^2\right)^{1/2}.
		\end{equation}
		Provided that $0<\theta\leq 1$, 
		\begin{equation}\label{ineq:PCA-lower-5}
			\left(1 - \left(\frac{1-\theta^2}{1+\theta^2}\right)^2\right)^{1/2} = \left(\frac{(1+\theta^2)^2 - (1-\theta^2)^2}{(1+\theta^2)^2}\right)^{1/2} = \frac{2\theta}{1+\theta^2} \leq 2\theta,
		\end{equation}
		\begin{equation}\label{ineq:PCA-lower-6}
			\left(1 - \left(\frac{1+\theta^2/2}{1+\theta^2}\right)^2\right)^{1/2} = \frac{\left(\theta^2 + (3/4)\theta^4\right)^{1/2}}{1+\theta^2} \geq \frac{\theta}{2}.
		\end{equation}
		Combining \eqref{ineq:PCA-lower-4}, \eqref{ineq:PCA-lower-5}, and \eqref{ineq:PCA-lower-6}, we have
		\begin{equation}\label{ineq:PCA-lower-7}
			\frac{\theta}{2}\leq \left\|\sin\Theta(U^{(k)}, U^{(l)})\right\| \leq 2\theta,\quad \forall 1\leq k\neq l \leq N.
		\end{equation}
		
		Suppose
		$$X^{(k)} = \left[X^{(k)}_1 ~\ldots~X^{(k)}_n\right] \overset{iid}{\sim} N(0, \Sigma^{(k)}),\quad k=1,\ldots, N.$$
		Next, we consider the Kullback-Leibler divergence between $X^{(k)}$ and $X^{(l)}$ for any $1\leq k \neq l\leq N$. Note the following fact on the Kullback-Leibler divergence between multivariate Gaussians: suppose $X = [X_1,\ldots, X_n] \overset{iid}{\sim} N(0, \Sigma)$ and $X' = [X_1',\ldots, X_n'] \overset{iid}{\sim} N(0, \Sigma')$ are $p$-dimensional vectors. If $\Sigma$ and $\Sigma'$ are non-degenerating, then
		$$D_{KL}\left(X||X'\right) = \frac{n}{2}\left(\tr\left((\Sigma')^{-1}\Sigma\right) - p + \log \left(\frac{\det \Sigma'}{\det \Sigma}\right)\right).$$
		Since $\Sigma^{(k)}$ and $\Sigma^{(l)}$ may be degenerating, one cannot directly apply the previous formula to calculate their KL divergence. Instead, denote the top $(Ld)$-by-$(Ld)$ sub-matrix of $\Sigma^{(k)}$ as
		\begin{equation*}
			\begin{split}
				& \widetilde{\Sigma}^{(k)} = u^{(k)}(u^{(k)})^\top + \widetilde{D} \in \mathbb{R}^{(Ld) \times (Ld)},\\
				\text{where} \quad & u^{(k)} = \begin{bmatrix}
					\frac{1}{\sqrt{Ld(1+\theta^2)}}\left(1_d+\theta v^{(k)}\right) \\
					\vdots \\
					\frac{1}{\sqrt{Ld(1+\theta^2)}}\left(1_d+\theta v^{(k)}\right) \\
					\frac{1}{\sqrt{Ld(1+\theta^2)}}\left(1_d-\theta v^{(k)}\right) \\
					\vdots \\
					\frac{1}{\sqrt{Ld(1+\theta^2)}}\left(1_d-\theta v^{(k)}\right) \\
				\end{bmatrix}\in \mathbb{R}^{Ld}, \quad \widetilde{D} = 
				\sigma_0^2 I.
			\end{split}
		\end{equation*}
		By the structure of $\Sigma^{(k)}$, we know $\det(\widetilde\Sigma^{(k)}) = \det(\widetilde\Sigma^{(l)})$ for all $1\leq k, l\leq N$, and $\Sigma^{(k)}_{[1:Ld, 1:Ld]} = \widetilde{\Sigma}^{(k)}$, $\Sigma^{(k)}_{[(Ld+1):p, 1:Ld]} = 0$, $\Sigma^{(k)}_{[1:Ld,(Ld+1):p]} = 0$, $\Sigma^{(k)}_{[(Ld+1):p,(Ld+1):p]} = QQ^\top$. Here, $\Sigma^{(k)}_{[1:Ld, 1:Ld]}$ represents the submatrix formed by the first to $Ld$-th rows and first to $Ld$-th columns of $\Sigma^{(k)}$; $\Sigma^{(k)}_{[1:Ld,(Ld+1):p]}$ and $\Sigma^{(k)}_{[(Ld+1):p,(Ld+1):p]}$ are defined in a similar fashion. Then, 1) for any $1\leq k\leq N$ and $1\leq i\leq n$, $(X_i^{(k)})_{[1:Ld]}$ and $(X_i^{(k)})_{[(Ld+1):p]}$, i.e., the first $Ld$ entries and the other entries of $X_i$, are two independent vectors; 2) $(X_1^{(k)})_{[(Ld+1):p]},\ldots, (X_n^{(k)})_{[(Ld+1):p]}$ are independent and identically distributed. Thus,
		\begin{equation*}
			D_{KL}\left(X^{(l)}||X^{(k)}\right) = D_{KL}\left(X^{(l)}_{[1:Ld, :]}||X^{(k)}_{[1:Ld, :]}\right) =  \frac{n}{2}\left(\tr\left((\widetilde{\Sigma}^{(k)})^{-1}\widetilde{\Sigma}^{(l)}\right) - Ld\right).
		\end{equation*}
		Here, $X_{[1:LD, :]}^{(k)}$ and $X_{[1:LD, :]}^{(l)}$ represent the first $LD$ rows of $X^{(k)}$ and $X^{(l)}$, respectively. Since $u^{(k)}$ is a unit vector, one can verify that
		\begin{equation*}
			(\widetilde{\Sigma}^{(k)})^{-1} = \sigma_0^{-2}I_{Ld} + \left(\frac{1}{\sigma_0^2+1} - \sigma_0^{-2}\right)u^{(k)}(u^{(k)})^\top,
		\end{equation*} 
		\begin{equation*}
			\begin{split}
				(\widetilde{\Sigma}^{(k)})^{-1}\widetilde{\Sigma}^{(l)} = & I_{Ld} + \left(\frac{\sigma_0^2}{\sigma_0^2+1} - 1\right)u^{(k)}(u^{(k)})^\top + \sigma_0^{-2}u^{(l)}(u^{(l)})^\top \\
				& + \left(\frac{1}{\sigma_0^2+1} - \sigma_0^{-2}\right)u^{(k)}(u^{(k)})^\top u^{(l)}(u^{(l)})^\top,
			\end{split}
		\end{equation*}
		and
		\begin{equation}\label{ineq:D_KLX^lX_k}
			\begin{split}
				& D_{KL}\left(X^{(l)}||X^{(k)}\right) \\
				= & \frac{n}{2}\left(Ld + \left(\frac{\sigma_0^2}{\sigma_0^2+1}-1 + \sigma_0^{-2}\right) + \left(\frac{1}{\sigma_0^2+1}-\sigma_0^{-2}\right)\left((u^{(k)})^\top u^{(l)}\right)^2 - Ld\right)\\
				= & \frac{n}{2\sigma_0^2(\sigma_0^2+1)}\cdot \left(1 - \left((u^{(k)})^\top u^{(l)}\right)^2\right) = \frac{n}{2\sigma_0^2(\sigma_0^2+1)} \left\|\sin\Theta\left(U^{(k)}, U^{(l)}\right)\right\|^2\\
				\overset{\eqref{ineq:PCA-lower-7}}{\leq} & \frac{2n\theta^2}{\sigma_0^2(1+\sigma_0^2)}.
			\end{split}
		\end{equation}
		\item[Step 3] We finalize the proof by the generalized Fano's lemma. Specifically by \cite[Lemma 3]{yu1997assouad}, we have
		\begin{equation*}
			\begin{split}
				& \inf_{\widehat{U}}\sup_{\Sigma \in \mathcal{F}_{p,n, r}(\sigsumch, \sigmaxch, \nu, \kappa)}\mathbb{E}\|\sin\Theta(\widehat{U}, U)\| \overset{\eqref{ineq:PCA-lower-belongs}}{\geq} \inf_{\widehat{U}}\sup_{\Sigma\in \{\Sigma^{(l)}\}_{l=1}^N}\mathbb{E}\|\sin\Theta(\widehat{U}, U)\|\\ 
				\overset{\eqref{ineq:PCA-lower-7}\eqref{ineq:D_KLX^lX_k}}{\geq} & \frac{\theta}{4}\left(1 - \frac{\frac{2n\theta^2}{\sigma_0^2(1+\sigma_0^2)}+\log(2)}{\log(N)}\right) \overset{N\geq 3}{\geq} \frac{\theta}{4}\left(1 - \frac{\frac{2n\theta^2}{\sigma_0^2(1+\sigma_0^2)} + \log(2)}{(d/8)\vee \log(3)}\right)\\
				\geq & \frac{\theta}{4}\left(1 - \frac{\frac{2n\theta^2}{\sigma_0^2(1+\sigma_0^2)}}{(d/8)\vee \log(3)} - \frac{\log(2)}{(d/8)\vee \log(3)}\right) \\
				\geq & \frac{\theta}{4}\left(1 - \frac{\frac{2n\theta^2}{\sigma_0^2(1+\sigma_0^2)}}{d/8} - \frac{\log(2)}{\log(3)}\right).
			\end{split}
		\end{equation*}
		Now we set $\theta = \left(\frac{\sigma_0^2(1+\sigma_0^2)}{2n}\cdot\left(\frac{d}{32}\right)\right)^{1/2} \wedge 1$. Then, for uniform constant $c>0$, we have
		\begin{equation*}
			\begin{split}
				\theta & \geq c\left(\sqrt{\frac{d}{n}}(\sigma_0 + \sigma_0^2) \wedge 1\right) \geq \frac{c\left(\sqrt{d\sigma_0^2}+\sqrt{d\sigma_0^2\cdot \sigma_0^2}\right)}{\sqrt{n}}\wedge 1\\ 
				& \overset{\eqref{ineq:PCA-lower-tilde-sigma-0}}{\geq} \frac{c\left(\sigsumch + \sigmaxch\sigsumch\right)}{\sqrt{n}}\wedge 1.
			\end{split}
		\end{equation*}
		Therefore,
		\begin{equation*}
			\begin{split}
				& \inf_{\widehat{U}}\sup_{\Sigma \in \mathcal{F}_{p,n, r}(\sigsumch, \sigmaxch, \nu)}\mathbb{E}\|\sin\Theta(\widehat{U}, U)\| \\
				\geq & c\left(\frac{\sigsumch+\sigmaxch\sigsumch}{\sqrt{n}} \wedge 1\right) \gtrsim \left(\frac{\sigsumch}{(n\nu)^{1/2}} + \frac{\sigsumch\sigmaxch}{n^{1/2}\nu}\right)\wedge 1,
			\end{split}
		\end{equation*}
		which has finished the proof for \eqref{ineq:PCA-lower-1}. 
	\end{itemize}

	The proof of \eqref{ineq:PCA-lower-2} is similar to \eqref{ineq:PCA-lower-1}: we still (a) first construct a series of candidate covariance matrices, (b) prove separateness of these covariance matrices and boundedness of KL divergence of random samples, and (c) apply generalized Fano's lemma to finalize the proof. 
	
	We still assume $\nu=1$ without loss of generality. Since $\sigmaxch \leq \sigsumch$, \eqref{ineq:PCA-lower-2} is directly implied by \eqref{ineq:PCA-lower-1} (which has been just proved) when $r$ is a constant. Thus, we can assume $r \geq 50$ in this part of proof without loss of generality. By the Varshamov-Gilbert bound \citep[Lemma 4.7]{massart2007concentration}, we can find $w^{(1)}, \ldots, w^{(N)}\subseteq \{\pm 1\}^r$, such that
	\begin{equation}\label{eq:u}
		\left\|w^{(l)} - w^{(k)}\right\|_2^2 \geq r \quad \text{for all } 1\leq k\neq l\leq N,
	\end{equation}
	and $N\geq \exp(r/8)$. Consider the following set of covariance matrices for $l=1,\ldots, N$,
	\begin{equation*}
		\begin{split}
			A^{(l)} = \begin{bmatrix}
				(\theta w^{(l)})^\top\\
				\frac{1}{\sqrt{L}}I_r\\
				\vdots\\
				\frac{1}{\sqrt{L}}I_r\\
				0_{(p-dr-1) \times r}
			\end{bmatrix}, \quad A^{(l)} = U^{(l)}R^{(l)}\text{ is the QR orthogonalization};
		\end{split}
	\end{equation*}
	$$\Sigma^{(l)} = A^{(l)}(A^{(l)})^\top + D \in \mathbb{R}^{p\times p}, \quad D_{ij}=\left\{\begin{array}{ll}
		\sigmaxch^2, & i=j=1;\\
		0, & \text{otherwise}.
	\end{array} \right.  $$
	Here, $L = \lceil 1/c_I \rceil$; $w^{(l)} \in \mathbb{R}^{r}$ has i.i.d.~Rademacher entries; $0<\theta\leq \sqrt{(c_I\wedge 1)/r}$ is some parameter to be determined later; $\frac{1}{\sqrt{L}}I_r$ is repeated for $L$ times; by design, the noise only appears in the first entry of the vector, so that the conditions 
	$$\max_i D_{ii} = D_{11} \leq \sigmaxch^2\quad \text{and}\quad \sum_{i=1}^p D_{ii} = D_{11} \leq \sigsumch^2$$
	naturally hold, provided that $\sigsumch\geq \sigmaxch$. 
	
	By the relationship between singular values of the matrix and its submatrices (see \cite[Lemma 2]{cai2018rate}), we have
	\begin{equation*}
		\begin{split}
			& \lambda_r\left(A^{(l)}\right) \geq \lambda_r\left(\begin{bmatrix}
				\frac{1}{\sqrt{L}} I_r\\
				\vdots\\
				\frac{1}{\sqrt{L}} I_r
			\end{bmatrix}\right) = 1, \\
			& \|A^{(l)}\|\leq \left(\|\theta w^{(l)}\|_2^2 + \left\|\begin{bmatrix}
				\frac{1}{\sqrt{L}} I_r\\
				\vdots\\
				\frac{1}{\sqrt{L}} I_r
			\end{bmatrix}\right\|^2\right)^{1/2},
		\end{split}
	\end{equation*}
	which means
	\begin{equation*}
		\begin{split}
			I(U^{(l)}) = & \frac{p}{r}\max_i \|e_i^\top U^{(l)}\|_2^2 \leq \frac{p}{r}\max_i\|e_i^\top A^{(l)} (R^{(l)})^{-1}\|_2^2 \\
			\leq & \frac{p}{r}\max_i \|e_i^\top A^{(l)}\|_2^2 \cdot \lambda_r^{-2}(R^{(l)}) \\
			\leq & \frac{p}{r}\max\left\{\theta^2 r, \frac{1}{L}\right\} \cdot \lambda_r^{-2}(A^{(l)}) \leq c_I p/r.
		\end{split}
	\end{equation*}
	Therefore,
	\begin{equation}\label{eq:Sigma-be-to-class}
		\Sigma^{(1)},\ldots, \Sigma^{(N)} \in \mathcal{F}_{p, n, r}(\sigsumch, \sigmaxch, \nu, \kappa).
	\end{equation}
	
	Again, suppose $X^{(l)} = [X^{(l)}_1,\ldots, X^{(l)}_n]\overset{iid}{\sim}N(0, \Sigma^{(l)})$ for $l=1,\ldots, N$. Next, we evaluate the $\sin\Theta$ distances between each pair of $(U^{(l)}, U^{(k)})$ and the KL divergence among $X^{(l)}$'s. Similarly to the proof for the first part of this theorem, we introduce a ``condensed version" of $A^{(l)}, \Sigma^{(l)},$ and $X^{(l)}$.
	\begin{equation*}
		\widetilde{A}^{(l)} = \begin{bmatrix}
			(\theta w^{(l)})^\top\\
			I_r
		\end{bmatrix}\in \mathbb{R}^{(r+1)\times r}, \quad \widetilde{A}^{(l)} = \widetilde{U}^{(l)}R^{(l)} \text{ is the QR decomposition},
	\end{equation*}
	\begin{equation*}
		\widetilde{\Sigma}^{(l)} = \widetilde{A}^{(l)}(\widetilde{A}^{(l)})^\top + \widetilde{D}\in \mathbb{R}^{(r+1)\times(r+1)}, \quad \widetilde{D}_{ij}=\left\{\begin{array}{ll}
			\sigmaxch^2, & i=j=1;\\
			0, & \text{otherwise},
		\end{array} \right.
	\end{equation*}
	\begin{equation*}
		\begin{split}
			& \widetilde{X}^{(l)} = [\widetilde{X}_1^{(l)},\ldots, \widetilde{X}_n^{(l)}], \quad X_i^{(l)} = T\widetilde{X}_i^{(l)}\in \mathbb{R}^{r+1},\\ 
			& \text{where } T = \begin{bmatrix}
				1 & 0_{1\times r}\\
				0_{r\times 1} & \frac{1}{\sqrt{L}}I_r\\
				\vdots & \vdots\\
				0_{r\times 1} & \frac{1}{\sqrt{L}}I_r\\
				0_{(p-Lr-1)\times 1} & 0_{(p-Lr-1)\times r}
			\end{bmatrix}.
		\end{split}
	\end{equation*}
	Then, $\widetilde{A}^{(l)}$, $A^{(l)}$, $\widetilde{U}^{(l)}$, and $U^{(l)}$ can be similarly related via $T$,
	\begin{equation}\label{eq:T}
		T\widetilde{A}^{(l)} = A^{(l)},\quad T\widetilde{U}^{(k)} = U^{(k)}.
	\end{equation}
	One can also verify that $\widetilde{X}^{(l)} \overset{iid}{\sim}N(0, \widetilde\Sigma^{(l)})$. Noting that
	\begin{equation*}
		v^{(l)} = \frac{1}{\sqrt{1 + r\theta^2}}\begin{pmatrix}
			1\\ 
			-\theta w^{(l)}
		\end{pmatrix} \in \mathbb{R}^{r+1}
	\end{equation*}
	is the orthogonal complement to $\widetilde{A}^{(l)}$, we have
	\begin{equation*}
		\begin{split}
			& \left\|\sin\Theta(\widetilde{U}^{(k)}, \widetilde{U}^{(l)})\right\| = \left\|(v^{(l)})^\top\widetilde{U}^{(k)}\right\| = \left\|(v^{(l)})^\top\widetilde{A}^{(k)}(R^{(l)})^{-1}\right\| \\
			\geq & \left\|(v^{(l)})^\top\widetilde{A}^{(k)}\right\| \cdot \lambda_r^{-1}(A^{(l)}) \geq \left\|\frac{\theta w^{(l)} - \theta w^{(k)}}{\sqrt{1+r\theta^2}}\right\|_2\frac{1}{\sqrt{1+r\theta^2}} \\
			= & \frac{\theta}{1+r\theta^2}\left\|w^{(l)} - w^{(k)}\right\|_2.
		\end{split}
	\end{equation*}
	Since $0\leq \theta \leq \sqrt{(c_I\wedge 1)/r}$, we additionally have
	\begin{equation}\label{ineq:sin-Uk-Ul}
		\begin{split}
			& \left\|\sin(U^{(k)}, U^{(l)})\right\| \overset{\eqref{eq:T}}{=}  \left\|\sin(\widetilde{U}^{(k)}, \widetilde{U}^{(l)})\right\| \geq \frac{\theta}{1+r\theta^2}\left\|w^{(l)} - w^{(k)}\right\|_2\\
			\geq & \frac{\theta}{2}\left\|w^{(l)} - w^{(k)}\right\|_2 \overset{\eqref{eq:u}}{\geq} \frac{\sqrt{r}\theta}{2}, \quad \text{for all } 1\leq k\neq l \leq N.
		\end{split}
	\end{equation}
	
	Next, we consider the KL divergence among these samples. Given the linear relationship $X^{(l)} = T\widetilde{X}^{(l)}, X^{(k)} = T\widetilde{X}^{(k)}$ with non-singular map $T$, we have
	\begin{equation*}
		\begin{split}
			& D_{KL}\left(X^{(l)}||X^{(k)}\right) = D_{KL}\left(\widetilde{X}^{(l)}||\widetilde{X}^{(k)}\right)\\
			= &  \frac{n}{2}\left(\tr\left((\widetilde\Sigma^{(k)})^{-1}\widetilde\Sigma^{(l)}\right) - (r+1) + \log\left(\frac{\det(\widetilde\Sigma^{(k)})}{\det(\widetilde\Sigma^{(l)})}\right)\right).
		\end{split}
	\end{equation*}
	Noting that
	\begin{equation*}
		\widetilde{\Sigma}^{(k)} = 
		\begin{bmatrix}
			\theta^2r + \sigmaxch^2 & \theta (w^{(k)})^\top\\
			\theta w^{(k)} & I_r
		\end{bmatrix}, 
	\end{equation*}
	$\det(\widetilde\Sigma^{(k)}) = \det(\widetilde\Sigma^{(l)})$ by symmetry. By the matrix inversion formula and calculation, one has
	\begin{equation*}
		\begin{split}
			(\widetilde{\Sigma}^{(k)})^{-1} = &\begin{bmatrix}
				\sigmaxch^{-2} & -\sigmaxch^{-2}\nu(w^{(k)})^\top \\\
				-\sigmaxch^{-2}\nu w^{(k)}  & I_r + \sigmaxch^{-2}\nu^2w^{(k)} (w^{(k)})^\top
			\end{bmatrix},
		\end{split}
	\end{equation*}
	and
	\begin{equation}\label{ineq:KL-X^l-X^k}
		\begin{split}
			& D_{KL}\left(X^{(l)}||X^{(k)}\right) = \frac{n}{2}\left(\tr\left((\widetilde{\Sigma}^{(k)})^{-1}\widetilde{\Sigma}^{(l)}\right) - (r+1)\right) \\
			= & \frac{n}{2}\left((r+1) + 2\sigmaxch^{-2}\theta^2\left(r-(w^{(k)})^\top w^{(l)}\right) - (r+1)\right) \\
			= & \frac{n}{2}\left(2\sigmaxch^{-2}\theta^2(r-(w^{(k)})^\top w^{(l)})\right)\\
			\leq & n\sigmaxch^{-2}\theta^2 r.
		\end{split}
	\end{equation}	
	
	Finally, by generalized Fano's lemma \cite[Lemma 3]{yu1997assouad},
	\begin{equation*}
		\begin{split}
			& \inf_{\widehat{U}}\sup_{\Sigma\in \mathcal{F}_{p,n,r}(\sigsumch, \sigmaxch, \nu, \kappa)}\left\|\sin\Theta(\widehat{U}, U)\right\|\\ \overset{\eqref{eq:Sigma-be-to-class}}{\geq} &  \inf_{\widehat{U}}\sup_{\Sigma\in \{\Sigma_k\}_{k=1}^{N}}\left\|\sin\Theta(\widehat{U}, U)\right\| \overset{\eqref{ineq:sin-Uk-Ul}\eqref{ineq:KL-X^l-X^k}}{\geq}  \frac{\sqrt{r}\theta}{4}\left(1 - \frac{n\sigmaxch^{-2}\theta^2 r+\log(2)}{r/8}\right).
		\end{split}
	\end{equation*}
	Set $\theta = \sigmaxch/(32\sqrt{n}) \wedge \sqrt{(c_I\wedge 1)/r}$. Given $r\geq 50$, we have
	$$1 - \frac{n\sigmaxch^{-2}\theta^2 r +\log(2)}{r/8} \geq 1 - \frac{r/32 + \log(2)}{r/8}\geq 1/3,$$
	which means
	\begin{equation*}
		\inf_{\widehat{U}}\sup_{\Sigma\in \mathcal{F}_{p,n,r}(\sigsumch, \sigmaxch, \nu, \kappa)}\left\|\sin\Theta(\widehat{U}, U)\right\| \gtrsim c\left(\sqrt{\frac{r}{n}}\sigmaxch\wedge 1\right) = c\left(\frac{r^{1/2}\sigmaxch}{(n\nu)^{1/2}}\wedge 1\right).
	\end{equation*}
	for some constant $c>0$ that only relies on $c_I$.
	Thus, we have finished the proof for \eqref{ineq:PCA-lower-2}. 
\end{proof}

\begin{proof}[Proof of Proposition \ref{pr:heteroPCA-approximate}]
	Since $\widehat{\Sigma}$ is invariant after translation on $Y$, we can assume that the mean vector $\mu=0$ without loss of generality. Let
	$$\Sigma_0 = \widetilde{U}\Lambda \widetilde{U}^\top = \begin{bmatrix}
		U ~ U_\perp
	\end{bmatrix} \begin{bmatrix}
		\Lambda_1 & \\
		& \Lambda_2
	\end{bmatrix} \begin{bmatrix}
		U^\top\\
		U_\perp^\top
	\end{bmatrix}$$
	be the full eigenvalue decomposition of $\Sigma_0$. Here, $\widetilde{U} = [U ~ U_\perp]$ is the $p$-by-$p$ orthogonal matrix comprised of all eigenvectors of $\Sigma_0$, $\widetilde{U}= [U ~ U_{\perp}]$, $\Lambda_1$ and $\Lambda_2$ are $r$-by-$r$ and $(p-r)$-by-$(p-r)$ non-negative diagonal matrices containing the first $r$ and the other $(p-r)$ eigenvalues of $\Sigma_0$, respectively. We can also decompose $Y_k$ based on its principal components as
	$$Y_k = X_k + \varepsilon_k = U\Lambda_1^{1/2} \gamma_{1k} + U_\perp \Lambda_2^{1/2}\gamma_{2k} + \varepsilon_k,$$
	where the random scores satisfy $\mathbb{E}(\gamma_{1k}^\top, \gamma_{2k}^\top) = 0, \Cov((\gamma_{1k}^\top, \gamma_{2k}^\top)) = I$. We can further write this decomposition in a matrix form,
	$$\Gamma_1 = [\gamma_{11} \cdots \gamma_{1n}], \quad \Gamma_2 = [\gamma_{21} \cdots \gamma_{2n}], \quad \Gamma = \begin{bmatrix}
		\Gamma_1\\
		\Gamma_2
	\end{bmatrix} = \begin{bmatrix}
		\gamma_{11} \cdots \gamma_{1n}\\
		\gamma_{21} \cdots \gamma_{2n}
	\end{bmatrix},$$
	$$Y = X^{(1)} + X^{(2)} + E, \quad  X^{(1)} = U\Lambda^{1/2}_1\Gamma_1, \quad X^{(2)} =   U_\perp\Lambda^{1/2}_2\Gamma_2.$$
	We divide the rest of the proof in three steps.
	\begin{itemize}[leftmargin=*]
		\item[Step 1] Define $\widehat{\Sigma}_X = (XX^\top - n\bar{X}\bar{X}^\top)/(n-1)$ and $\widehat{\Sigma}_{X^{(1)}} = (X^{(1)}X^{(1)\top} - n\bar{X}^{(1)}\bar{X}^{(1)\top})/(n-1)$. 
		By the same argument as the proof of Theorem \ref{th:heterogeneous-PCA}, we can prove the following average perturbation inequality for $\widehat{\Sigma} - \widehat{\Sigma}_X$,
		\begin{equation*}
			\mathbb{E}_E\left\|\Delta\left((n-1)(\widehat{\Sigma} - \widehat{\Sigma}_X)\right)\right\| \lesssim \sqrt{n}\sigsum\sigmax + \sigsum^2 + \|X\|(\sigsum + \sqrt{r}\sigmax) + n^{1/2}\|\bar{X}\|_2\sigsum. 
		\end{equation*}
		Here, $\mathbb{E}_E$ means the expectation with respect to the noise part $E$. In addition, we can decompose $(n-1)(\widehat{\Sigma} - \widehat{\Sigma}_{X^{(1)}})$ in the similar way as \eqref{eq:hetero-PCA-upper-1}:
		\begin{equation*}
			\begin{split}
				(n-1)(\widehat{\Sigma}_X - \widehat{\Sigma}_{X^{(1)}}) = & X^{(1)}X^{(2)\top} + X^{(2)}X^{(1)\top} + X^{(2)}X^{(2)\top} \\
				& - n\left(\bar{X}^{(1)}\bar{X}^{(2)\top} + \bar{X}^{(2)}\bar{X}^{(1)\top} + \bar{X}^{(2)}\bar{X}^{(2)\top}\right).
			\end{split}
		\end{equation*}
		Therefore,
		\begin{equation}\label{ineq:hat_Sigma-hat_Sigma_X}
			\begin{split}
				& \mathbb{E}_E\left\|\Delta(n\widehat{\Sigma} - n\widehat{\Sigma}_X)\right\| \lesssim  \sqrt{n}\sigsum\sigmax + \sigsum^2 + \|X\|(\sigsum + \sqrt{r}\sigmax) \\
				& + n^{1/2}\|\bar{X}\|_2\sigsum + 2\|X^{(1)}X^{(2)\top}\| + \|X^{(2)}\|^2 + 2n\|\bar{X}^{(1)}\|_2 \|\bar{X}^{(2)}\|_2 + n\|\bar{X}^{(2)}\|_2^2.
			\end{split}
		\end{equation}
		Noting that $\widehat{\Sigma}^{(1)}$ is rank-$r$ and has singular subspace $U$, by the robust $\sin\Theta$ theorem (Theorem \ref{th:diagonal-less}),
		\begin{equation}\label{proposition-E}
			\begin{split}
				& \mathbb{E}_E\left\|\sin\Theta(\widehat{U}, U)\right\| \lesssim \Bigg(\frac{\sqrt{n}\sigsum\sigmax + \sigsum^2 + \|X\|(\sigsum + \sqrt{r}\sigmax) + n^{1/2}\|\bar{X}\|_2\sigsum}{(n-1)\lambda_r(\widehat{\Sigma}^{(1)})} \\
				& \quad +\frac{2\|X^{(1)}X^{(2)\top}\| + \|X^{(2)}\|^2 + 2n\|\bar{X}^{(1)}\|_2 \|\bar{X}^{(2)}\|_2 + n\|\bar{X}^{(2)}\|_2^2}{(n-1)\lambda_r(\widehat{\Sigma}^{(1)})}\Bigg) \wedge 1.
			\end{split}
		\end{equation}
		We analyze each term above as follows. Specifically, we introduce the following desirable probability event $\mathcal{A}$, which happens if the inequalities \eqref{ineq:probability-1} -- \eqref{ineq:probability-3} all hold:
		\begin{equation}\label{ineq:probability-1}
			\begin{split}
				& \sqrt{n} + C\sqrt{p} \geq \lambda_1 \left(\begin{bmatrix}
					\Gamma_1\\
					\Gamma_2
				\end{bmatrix}\right), \lambda_1 \left(\begin{bmatrix}
					\Gamma_1\\
					- \Gamma_2
				\end{bmatrix}\right), \lambda_p \left(\begin{bmatrix}
					\Gamma_1\\
					\Gamma_2
				\end{bmatrix}\right), \lambda_p \left(\begin{bmatrix}
					\Gamma_1\\
					-\Gamma_2
				\end{bmatrix}\right) \\
				& \geq (\sqrt{n} - C\sqrt{p})\vee 0,
			\end{split}
		\end{equation}
		\begin{equation}\label{ineq:probability-2}
			\lambda_r\left(\widehat{\Sigma}_{X^{(1)}}\right) \geq \frac{5}{36}\lambda_r(\Lambda) ,\quad \|X^{(1)}\| \leq  2\sqrt{n}\|\Lambda^{1/2}\|, \quad  \|\sqrt{n}\bar{\Gamma}^{(1)}\|_2\leq \sqrt{n}/3, 
		\end{equation}
		\begin{equation}\label{ineq:probability-3}
			\|\Gamma_2\| \leq C(\sqrt{n}+\sqrt{p}),\quad \|\bar{\Gamma}^{(2)}\|_2 \leq C\sqrt{p/n}.
		\end{equation}
		In the next two steps, we analyze each term in \eqref{proposition-E} given $\mathcal{A}$ holds, then evaluate the probability that $\mathcal{A}$ holds.
		\item[Step 2] Now we assume $\mathcal{A}$ happens and \eqref{ineq:probability-1}--\eqref{ineq:probability-3} all hold. We plug in $X^{(1)} = U\Lambda_1^{1/2}\Gamma_1$ and $X^{(2)} = U_{\perp}\Lambda_2^{1/2}\Gamma_2$ and obtain
		\begin{equation*}
			\begin{split}
				& \|X^{(1)}X^{(2)\top}\| = \|X^{(2)}X^{(1)\top}\|\leq \|\Lambda_1^{1/2}\|\|\Lambda_2^{1/2}\|\|\Gamma_1\Gamma_2^{\top}\| \\
				= &  \frac{1}{2}\lambda_1^{1/2}(\Lambda)\lambda_{r+1}^{1/2}(\Lambda) \left\|\begin{bmatrix}
					\Gamma_1\\
					\Gamma_2\\
				\end{bmatrix} \begin{bmatrix}
					\Gamma_1^\top ~\Gamma_2^\top 
				\end{bmatrix} - \begin{bmatrix}
					\Gamma_1\\
					-\Gamma_2\\
				\end{bmatrix} \begin{bmatrix}
					\Gamma_1^\top ~ - \Gamma_2^\top 
				\end{bmatrix}\right\|,
			\end{split}
		\end{equation*}
		\begin{equation*}
			\begin{split}
				& \left\|\begin{bmatrix}
					\Gamma_1\\
					\Gamma_2
				\end{bmatrix}\begin{bmatrix}
					\Gamma_1^\top ~\Gamma_2^\top 
				\end{bmatrix} - \begin{bmatrix}
					\Gamma_1\\
					-\Gamma_2
				\end{bmatrix} \begin{bmatrix}
					\Gamma_1^\top ~ - \Gamma_2^\top 
				\end{bmatrix}\right\| \\
				= & \max_{w \in \mathbb{R}^p: \|w\|_2\leq 1}\left|\left([\Gamma_1^\top ~\Gamma_2^\top]w\right)^2 - \left([\Gamma_1^\top ~ -\Gamma_2^\top]w\right)^2\right|\\
				\leq & \max\left\{\lambda_1^2\left(\begin{bmatrix}
					\Gamma_1\\
					\Gamma_2
				\end{bmatrix}\right), \lambda_1^2\left(\begin{bmatrix}
					\Gamma_1\\
					-\Gamma_2
				\end{bmatrix}\right)\right\} - \min\left\{\lambda_p^2\left(\begin{bmatrix}
					\Gamma_1\\
					\Gamma_2
				\end{bmatrix}\right), \lambda_p^2\left(\begin{bmatrix}
					\Gamma_1\\
					-\Gamma_2
				\end{bmatrix}\right)\right\} \\
				\overset{\eqref{ineq:probability-1}}{\leq} & \left(\sqrt{n} + C\sqrt{p}\right)^2 - \left\{\left(\sqrt{n} - C\sqrt{p}\right) \vee 0\right\}^2 \leq C(\sqrt{np} + p).
			\end{split}
		\end{equation*}
		Thus,
		\begin{equation*}
			\|X^{(1)}X^{(2)\top}\| \leq C(\sqrt{np}+p)\lambda_1^{1/2}(\Lambda)\lambda_{r+1}^{1/2}(\Lambda) \lesssim (\sqrt{np}+p)\lambda_r^{1/2}(\Lambda)\lambda_{r+1}^{1/2}(\Lambda).
		\end{equation*}
		Similarly,
		\begin{equation*}
			\|X^{(2)}X^{(2)\top}\| = \|\Lambda_2^{1/2} \Gamma_2\Gamma_2^{\top}\Lambda_2^{1/2}\| = \|\Lambda_2\| \|\Gamma_2\|^2 \overset{\eqref{ineq:probability-3}}{\leq} C(n+p)\lambda_{r+1}(\Lambda); 
		\end{equation*}
		
		\begin{equation*}
			\begin{split}
				& 2n\|\bar{X}^{(1)}\|_2\|\bar{X}^{(2)}\|_2 + n \|\bar{X}^{(2)}\|_2^2\\
				= & 2n\|U\Lambda_1 \bar{\Gamma}_1\|_2 \|U_\perp\Lambda_2 \bar{\Gamma}_2\|_2 + n\|U\Lambda_1\bar{\Gamma}_2\|_2^2\\
				\leq & 2n\lambda_1^{1/2}(\Lambda)\lambda_{r+1}^{1/2}(\Lambda)\|\bar{\Gamma}_1\|_2 \|\bar{\Gamma}_2\|_2 + n\lambda_{r+1}(\Lambda) \|\bar{\Gamma}_2\|_2^2\\
				\overset{\eqref{ineq:probability-2}\eqref{ineq:probability-3}}{\lesssim} & n\lambda_r^{1/2}(\Lambda)\lambda_{r+1}^{1/2}(\Lambda)\sqrt{p/n} +n \lambda_{r+1}(\Lambda) p/n,
			\end{split}
		\end{equation*}
		\begin{equation*}
			\|\bar{X}^{(1)}\|_2 = \|U\Lambda_1\bar{\Gamma}_1\|_2 \overset{\eqref{ineq:probability-2}}{\leq} \lambda_1^{1/2}(\Lambda)\|\bar{\Gamma}^{(1)}\|_2 \lesssim \lambda_r^{1/2}(\Lambda),
		\end{equation*}
		\begin{equation*}
			\|\widehat{X}^{(2)}\|_2 \leq \|\Lambda_2^{1/2}\| \|\bar{\Gamma}_2\|_2 \lesssim \lambda_{r+1}(\Lambda) \sqrt{p/n},
		\end{equation*}
		\begin{equation*}
			\|\bar{X}\|_2 \leq \|\bar{X}^{(1)}\|_2 + \|\bar{X}^{(2)}\|_2 \leq \lambda_r^{1/2}(\Lambda) + \lambda_{r+1}^{1/2}(\Lambda)\sqrt{p/n}.
		\end{equation*}
		Summarizing all previous bounds, when $\mathcal{A}$ holds, we have
		\begin{equation*}
			\begin{split}
				& \mathbb{E}_E\left\|\sin\Theta(\widehat{U}, U)\right\| \lesssim \Bigg(\frac{\sqrt{n}\sigsum\sigmax + \sigsum^2 + \|X\|(\sigsum + \sqrt{r}\sigmax) + n^{1/2}\|\bar{X}\|_2\sigsum}{n\lambda_r(\widehat{\Sigma}_{X^{(1)}})} \\
				& \quad +\frac{2\|X^{(1)}X^{(2)\top}\| + \|X^{(2)}\|^2 + 2n\|\bar{X}^{(1)}\|_2 \|\bar{X}^{(2)}\|_2 + n\|\bar{X}^{(2)}\|_2^2}{n\lambda_r(\widehat{\Sigma}_{X^{(1)}})}\Bigg) \wedge 1\\
				\lesssim & \Bigg(\frac{\sqrt{n}\sigsum\sigmax+\sigsum^2+(\sqrt{n}\lambda_r^{1/2} + \sqrt{p}\lambda_{r+1}^{1/2}(\Lambda))(\sigsum+\sqrt{r}\sigmax)}{n\lambda_r(\Lambda)}\\
				& + \frac{ n^{1/2}\sigsum(\lambda_r^{1/2}(\Lambda)+\lambda_{r+1}^{1/2}(\Lambda)\sqrt{p/n})}{n\lambda_r(\Lambda)}\\
				& + \frac{ (\sqrt{np}+p)\lambda_r^{1/2}(\Lambda)\lambda_{r+1}^{1/2}(\Lambda) + (n+p)\lambda_{r+1}(\Lambda) + n\lambda^{1/2}_r(\Lambda)\lambda_{r+1}^{1/2}(\Lambda)\sqrt{p/n} + n\lambda_{r+1}(\Lambda)p/n }{n\lambda_r(\Lambda)}\Bigg)\wedge 1\\
				\lesssim & \left(\frac{\sigsum+\sqrt{r}\sigmax}{n^{1/2}\lambda_r^{1/2}(\Lambda)} + \frac{\sigsum\sigmax}{n^{1/2}\lambda_r(\Lambda)} + \frac{(\sqrt{np}+p)\lambda_{r+1}^{1/2}(\Lambda)}{n\lambda_r^{1/2}(\Lambda)} + \frac{\lambda_{r+1}(\Lambda)}{\lambda_r(\Lambda)}\right)\wedge 1.
			\end{split}
		\end{equation*}
		Here, the penultimate inequality is due to the following facts:
		\begin{itemize}[leftmargin=*]
			\item  $\frac{\sigsum^2}{n\lambda_r(\Lambda)}\wedge 1 \leq \frac{\sigsum}{n^{1/2}\lambda_r^{1/2}(\Lambda)}\wedge 1$;
			\item Since $ab\wedge 1 \leq (a+b)\wedge 1$ for any $a, b\geq 0$, $$\frac{\sqrt{p}\lambda_{r+1}^{1/2}(\Lambda)(\sigsum+\sqrt{r}\sigmax)}{n\lambda_r(\Lambda)}\wedge 1 \leq \left(\frac{p^{1/2}\lambda_{r+1}^{1/2}(\Lambda)}{n^{1/2}\lambda_r^{1/2}(\Lambda)} + \frac{\sigsum+\sqrt{r}\sigmax}{n^{1/2}\lambda^{1/2}_r(\Lambda)}\right)\wedge 1; $$
			\item By the same reason above,
			$$\frac{n^{1/2}\sigsum\lambda_{r+1}^{1/2}(\Lambda)\sqrt{p/n}}{n\lambda_r(\Lambda)} \wedge 1 \leq \left(\frac{\sigsum}{n^{1/2}\lambda_r^{1/2}(\Lambda)}+\left(\frac{p\lambda_{r+1}(\Lambda)}{n\lambda_r(\Lambda)}\right)^{1/2}\right)\wedge1,$$
			\item $	\left(\frac{p\lambda_{r+1}(\Lambda)}{n\lambda_r(\Lambda)}\right)\wedge 1 \leq \left(\frac{p\lambda_{r+1}(\Lambda)}{n\lambda_r(\Lambda)}\right)^{1/2}\wedge 1,$
		\end{itemize}
		\item[Step 3] In this step, we evaluate the probability that the event $\mathcal{A}$ holds by giving probability upper bounds for \eqref{ineq:probability-1} -- \eqref{ineq:probability-3} as follows.
		\begin{itemize}[leftmargin=*]
			\item Noting that $\begin{bmatrix}
				\Gamma_1\\ \Gamma_2
			\end{bmatrix}\in \mathbb{R}^{p\times n}$ and $\begin{bmatrix}
				\Gamma_1\\ -\Gamma_2
			\end{bmatrix}\in \mathbb{R}^{p\times n}$ are random matrices with i.i.d. columns, by \cite[Corollary 5.35]{vershynin2010introduction}, 
			\begin{equation*}
				\begin{split}
					& \bbP\left(\sqrt{n} + C\sqrt{p} + t \geq 
					\text{all singular values of} \begin{bmatrix}
						\Gamma_1 \\
						\Gamma_2
					\end{bmatrix}\begin{bmatrix}
						\Gamma_1 \\
						- \Gamma_2
					\end{bmatrix} \geq \sqrt{n} - C\sqrt{p} - t\right) \\
					\leq & \exp(-Ct^2/2).
				\end{split}
			\end{equation*}
			By setting $t = C\sqrt{p}$ for large constant $C>0$, we know \eqref{ineq:probability-1} holds with probability at least $1 - C\exp(-Cp)$. \item Since $\Gamma_1\in \mathbb{R}^{r\times n}$ has isotropic sub-Gaussian columns, 
			based on the argument of \eqref{ineq:hetero-PCA-upper-6} in the proof of Theorem \ref{th:heterogeneous-PCA},
			\eqref{ineq:probability-2} holds with probability at least $1 - C\exp(-Cn)$.
			\item Noting that $\Gamma_2$ is a $(p-r)$-by-$n$ random matrix with i.i.d. isotropic sub-Gaussian rows, by \cite[Corollary 5.35]{vershynin2011spectral}, 
			\begin{equation*}
				\|\Gamma_2\| \leq C(\sqrt{n}+\sqrt{p})
			\end{equation*}
			with probability at least $1 - \exp(-C(n+p))$; by Bernstein-type concentration inequality \cite[Proposition 5.16]{vershynin2011spectral},
			\begin{equation*}
				\bbP\left(\|\sqrt{n}\bar{\Gamma}^{(2)}\|_2^2 \geq p + C\sqrt{px}+Cx\right) \leq C\exp(-cx).
			\end{equation*}
			By setting $x = Cp$, we conclude that \eqref{ineq:probability-3} holds with probability at least $1 - C\exp(-Cp)$.
		\end{itemize}
		To sum up, the event $\mathcal{A}$ happens, i.e., \eqref{ineq:probability-1} -  \eqref{ineq:probability-3} all hold, with probability at least $1 - C\exp(-Cn) - C\exp(-Cp)$. 
		\item[Step 4] We finalize the proof in this step.
		\begin{equation*}
			\begin{split}
				& \mathbb{E}\left\|\sin\Theta(\widehat{U}, U)\right\| =  \mathbb{E}\left\|\sin\Theta(\widehat{U}, U)\right\|1_{\mathcal{A}} + \mathbb{E}\left\|\sin\Theta(\widehat{U}, U)\right\|1_{\mathcal{A}^c}\\
				\lesssim & \frac{\sigsum+\sqrt{r}\sigmax}{n^{1/2}\lambda_r^{1/2}(\Lambda)} + \frac{\sigsum\sigmax}{n^{1/2}\lambda_r(\Lambda)} + \frac{((np)^{1/2}+p)\lambda_{r+1}^{1/2}(\Lambda)}{n\lambda_r^{1/2}(\Lambda)} + \frac{\lambda_{r+1}(\Lambda)}{\lambda_r(\Lambda)} \\
				& + C\exp(-Cn) + C\exp(-Cp)\\
				\lesssim & \frac{\sigsum+\sqrt{r}\sigmax}{n^{1/2}\lambda_r^{1/2}(\Lambda)} + \frac{\sigsum\sigmax}{n^{1/2}\lambda_r(\Lambda)} + \frac{((np)^{1/2}+p)\lambda_{r+1}^{1/2}(\Lambda)}{n\lambda_r^{1/2}(\Lambda)} + \frac{\lambda_{r+1}(\Lambda)}{\lambda_r(\Lambda)},
			\end{split}
		\end{equation*}
		where the last inequality is due to $\sigsum^2/\lambda_r(\Lambda)\geq \exp(-Cp)+\exp(-Cn)$ in the assumption. Finally, the trivial upper bound 1 always holds for $\mathbb{E}\left\|\sin\Theta(\widehat{U}, U)\right\|$. We thus have finished this proof. 
	\end{itemize}
\end{proof}

\subsection{Proofs in Heteroskedastic Low-rank Matrix Denoising}

\begin{proof}[Proof of Theorem \ref{th:upper_bound_SVD}]
	First, we assume $\delta\in \mathbb{R}^{p_1}, \delta_i = \sum_{j=1}^{p_2} \sigma_{ij}^2$ as the row-wise summation of variances. Note that 
	\begin{equation*}
		YY^\top = XX^\top + X E^\top + EX^\top + EE^\top.
	\end{equation*}
	Then, $\mathbb{E}YY^\top = XX^\top + \diag(\delta)$. By the Wishart-type heteroskedastic concentration inequality \cite[Corollary 1]{cai2020non},
	\begin{equation}\label{ineq:thm2-middle-1}
		\begin{split}
			& \mathbb{E}\left\|EE^\top - \diag(\delta)\right\| \lesssim \sigma_C^2+\sigma_C\sigma_R + \sigma_R\sigmax\sqrt{\log(p_1\wedge p_2)} + \sigmax^2\log(p_1\wedge p_2).\\
		\end{split}
	\end{equation}
	By Lemma \ref{lm:orthogonal-projection} and $\|X\| \leq C\lambda_r(X)$, 
	\begin{equation}\label{ineq:thm2-middle-1.5}
		\mathbb{E}\left\|XE^\top\right\| \lesssim \|X\|\left(\sigma_C + \sqrt{r}\sigmax\right) \lesssim \lambda_r(X)\left(\sigma_C + \sqrt{r}\sigmax\right).
	\end{equation}
	By Lemma \ref{lm:diagonal-less-spectral-norm}, 
	\begin{equation}\label{ineq:thm2-middle-2}
		\begin{split}
			& \left\|\Delta(YY^\top - XX^\top)\right\| = \left\|\Delta(YY^\top - XX^\top - \diag(\delta))\right\|\\
			\leq & 2\left\|YY^\top - XX^\top - \diag(\delta)\right\| \leq \left\|XE^\top + EX^\top + EE^\top - \diag(\delta)\right\| \\ 
			\leq & 2\left\|EE^\top - \diag(\delta)\right\| + 4\|EX^\top\|.
		\end{split}
	\end{equation}
	Combining \eqref{ineq:thm2-middle-1}, \eqref{ineq:thm2-middle-1.5}, and \eqref{ineq:thm2-middle-2}, we have
	\begin{equation}\label{ineq:thm2-middle-3}
		\begin{split}
			& \mathbb{E}\left\|\Delta(YY^\top - XX^\top)\right\| \\
			\lesssim & \sigma_C^2+\sigma_C\sigma_R + \sigma_R\sigmax\sqrt{\log(p_1\wedge p_2)} + \sigmax^2\log(p_1\wedge p_2) + \lambda_r(X)\left(\sigma_C + \sqrt{r}\sigmax\right).
		\end{split}
	\end{equation}
	Note that the eigen-subspace of $XX^\top$ is the same as $U$, i.e., the left singular subspace of $X$. Since $I(U) \leq c_Ip/r$, the robust $\sin\Theta$ theorem (Theorem \ref{th:diagonal-less}) implies
	\begin{equation}\label{ineq:thm2-middle-4}
		\begin{split}
			& \mathbb{E}\left\|\sin\Theta\left(\widehat{U}, U\right)\right\| \leq \frac{C\mathbb{E}\|\Delta(YY^\top - XX^\top)\|}{\lambda_r^2(X)} \wedge 1\\
			\lesssim & \left(\frac{\sigma_C^2+\sigma_C\sigma_R + \sigma_R\sigmax\sqrt{\log(p_1\wedge p_2)} + \sigmax^2\log(p_1\wedge p_2) + \lambda_r(X)\left(\sigma_C + \sqrt{r}\sigmax\right)}{\lambda_r^2(X)}\right)\wedge 1\\
			\lesssim & \left( \frac{\sigma_C + \sqrt{r}\sigmax}{\lambda_r(X)} + \frac{\sigma_C\sigma_R + \sigma_R\sigmax\sqrt{\log(p_1\wedge p_2)} + \sigmax^2\log(p_1\wedge p_2)}{\lambda_r^2(X)}\right)\wedge 1.
		\end{split}
	\end{equation}
	The last inequality is due to the fact that $\sigma_C/\lambda_r(X)\wedge 1 \geq  \sigma_C^2/\lambda_r^2(X)\wedge 1$. In particular when $\sigmax\lesssim \sigma_C/\max\{\sqrt{r}, \sqrt{\log(p_1\wedge p_1)}\}$, we have 
	$$\frac{\sqrt{r}\sigmax}{\lambda_r(X)} \lesssim \frac{\sigma_C}{\lambda_r(X)}, \quad \frac{\sigma_R\sigmax\sqrt{\log(p_1\wedge p_2)}}{\lambda_r^2(X)} \lesssim \frac{\sigma_C\sigma_R}{\lambda_r^2(X)},$$  
	$$\frac{\sigmax^2\log(p_1\wedge p_2)}{\lambda_r^2(X)}\wedge 1 \lesssim \frac{\sigma_C^2}{\lambda_r^2(X)}\wedge 1 \leq \frac{\sigma_C}{\lambda_r(X)}\wedge 1.$$ 
	We thus have
	\begin{equation*}
		\mathbb{E}\left\|\sin\Theta(\widehat{U}, U)\right\| \lesssim \left(\frac{\sigma_C}{\lambda_r(X)} + \frac{\sigma_C\sigma_R}{\lambda_r^2(X)}\right)\wedge 1.
	\end{equation*}
\end{proof}

\subsection{Proofs in Poisson PCA}
\begin{proof}[Proof of Theorem \ref{th:poisson}]
	Denote $E = Y - X\in \mathbb{R}^{p_1\times p_2}$. Recall the following tail probability bound of Poisson distribution (see, e.g., \cite[Pages 22-23]{boucheron2013concentration}),
	\begin{equation*}
		\begin{split}
			& \bbP\left(|Y_{ij} - X_{ij}| \geq t\right) \leq 2\exp\left(-(t+X_{ij}) \log\left(1 + t/X_{ij}\right) + t\right), \quad \forall t\geq 0.
		\end{split}
	\end{equation*}
	Next, we aim to show
	\begin{equation}\label{ineq:Poisson-heavy-tail}
		\begin{split}
			\bbP\left(|Y_{ij} - X_{ij}|\geq t\right) \leq 2\exp\left(1 - ct/\sqrt{X_{ij}}\right), \quad \forall t>0.
		\end{split}
	\end{equation}
	for some uniform constant $c>0$.
	\begin{itemize}[leftmargin=*]
		\item If $t < \sqrt{X_{ij}}$, 
		\begin{equation*}
			\bbP\left(|Y_{ij} - X_{ij}|\geq t\right)\leq 1 \leq 2\exp(1 - t/\sqrt{X_{ij}}), \quad \forall t>0.
		\end{equation*}
		\item If $\sqrt{X_{ij}}\leq t\leq X_{ij}/2$, by Taylor expansion for $\log(1 + x/X_{ij})$,
		\begin{equation*}
			\begin{split}
				& (X_{ij} + t) \log\left(1 + \frac{t}{X_{ij}}\right) - t \geq (X_{ij} + t) \left(\frac{t}{X_{ij}} - \frac{t^2}{2X_{ij}^2}\right) - t\\
				= & \frac{t^2}{X_{ij}} - \frac{t^2}{X_{ij}} \cdot \frac{X_{ij}+t}{2X_{ij}} \geq \frac{t^2}{X_{ij}} -  \frac{t^2}{X_{ij}} \cdot \frac{3}{4} = \frac{t^2}{4X_{ij}} \geq \frac{t}{4\sqrt{X_{ij}}} - \frac{1}{16}.
			\end{split}
		\end{equation*}
		Thus,
		\begin{equation*}
			\begin{split}
				\bbP\left(|Y_{ij} - X_{ij}|\geq t\right) \leq 2\exp\left(\frac{1}{16} - \frac{t}{4\sqrt{X_{ij}}}\right).
			\end{split}
		\end{equation*}
		\item If $X_{ij}/2 \leq t \leq 2X_{ij}$, we shall note that
		\begin{equation*}
			\frac{\partial}{\partial X_{ij}}\left((t+X_{ij})\log\left(1+\frac{t}{X_{ij}}\right)\right) = \log\left(1 + \frac{t}{X_{ij}}\right) - \frac{t}{X_{ij}} \leq 0,
		\end{equation*}
		then $(t+X_{ij})\log\left(1+\frac{t}{X_{ij}}\right)$ is a decreasing function of $X_{ij}$. Thus,
		\begin{equation*}
			\begin{split}
				\bbP\left(|Y_{ij} - X_{ij}|\geq t\right) \leq & 2\exp\left(-(t+X_{ij})\log\left(1+\frac{t}{X_{ij}}\right)+t\right)\\
				\leq & 2\exp\left(-(t + 2t)\log\left(1 + \frac{t}{2t}\right) + t\right)\\
				\leq & 2\exp\left(-(3\log(1.5)-1)t\right)\\
				\leq & 2\exp\left(-\sqrt{c}(3\log(1.5)-1)t/\sqrt{X_{ij}}\right)
			\end{split}
		\end{equation*}
		\begin{equation*}
			\begin{split}
				(X_{ij}+t)\log\left(1 + \frac{t}{X_{ij}}\right) - t \geq (t + t)\log(1+1/2) - t.
			\end{split}
		\end{equation*}
		\item If $t \geq 2X_{ij}$, 
		\begin{equation*}
			\begin{split}
				& \bbP\left(|Y_{ij}-X_{ij}| \geq t\right) \leq 2\exp\left( -t\log(1 + 2) + t\right) \\
				\leq & 2\exp\left(-t/\sqrt{X_{ij}} \cdot \left(\sqrt{c} (\log(3)-1)\right)\right).
			\end{split}
		\end{equation*}
	\end{itemize}
	In summary, \eqref{ineq:Poisson-heavy-tail} always hold, which means $E_{ij}/\sqrt{X_{ij}}$ is a sub-exponential distributed random variable. By the sub-exponential Wishart-type concentration inequality \cite[Theorem 3]{cai2020non},
	\begin{equation*}
		\mathbb{E}\left\|EE^\top - \mathbb{E}EE^\top \right\| \lesssim \sigma_C\sigma_R + \sigma_C^2 + \sigma_R\sigmax \sqrt{\log(p_1)\log(p_2)} + \sigmax \log(p_1)\log(p_2).
	\end{equation*}
	Suppose the right singular subspace of $X$ is $V\in \mathbb{O}_{p_2, r}$. By Lemma \ref{lm:poisson-projection} and $\|X\|\leq C\lambda_r(X)$,
	\begin{equation*}
		\mathbb{E}\left\|XE^\top\right\| \lesssim \|X\| \mathbb{E}\|EV\| \lesssim \|X\|(\sigma_C + r\sigmax)\lesssim \lambda_r(X)(\sigma_C + \sqrt{r}\sigmax).
	\end{equation*}
	Now, the rest of the proof follows from the Inequality \ref{ineq:thm2-middle-2} and the arguments below in proof of Theorem \ref{th:upper_bound_SVD}. We can finally prove that
	\begin{equation*}
		\begin{split}
			& \mathbb{E}\left\|\sin\Theta(\widehat{U}, U)\right\| \\
			\lesssim  & \left(\frac{\sigma_C+r\sigmax}{\lambda_r(X)} + \frac{\sigma_C\sigma_R + \sigma_R\sigmax\sqrt{\log(p_1)\log(p_2)} + \sigmax^2\log(p_1)\log(p_2)}{\lambda_r^2(X)}\right)\wedge 1.
		\end{split}
	\end{equation*}
	
\end{proof}

\subsection{Proofs in SVD Based on Heteroskedastic and Incomplete Data} 
\begin{proof}[Proof of Theorem \ref{th:matrix-completion}] ~ 
	
	\begin{itemize}[leftmargin=*]
		\item[Step 1] We first derive bounds for some key quantities, including $\sigma_B^2$ and $\|\|\B_k\|\|_{\psi_1}$ to be defined later, for the application of matrix concentration in the next step. Since $\|Y_{ij}\|_{\psi_2}\leq C$, $Y_{ij}$ is sub-Gaussian and has bounded moments
		$$\mathbb{E}|Y_{ij}|^\alpha \leq C,\quad \alpha=1,2,3,4.$$
		Since
		\begin{equation}\label{eq:EYY}
			\begin{split}
				\left(\mathbb{E}\widetilde{Y}\widetilde{Y}^\top\right)_{ij} = & \sum_{k=1}^{p_2} \mathbb{E}\widetilde{Y}_{ik}\widetilde{Y}_{jk} = \left\{\begin{array}{ll}
					\sum_{k=1}^n \theta\mathbb{E}Y_{ik}^2, & i = j;\\
					\sum_{k=1}^n \theta^2\mathbb{E}Y_{ik}Y_{jk}, & i\neq j
				\end{array}\right. \\
				= &  \left\{\begin{array}{ll}
					\theta(XX^\top)_{ii} + \theta\sum_{k=1}^{p_2} \Var(Z_{ik}), & i=j;\\
					\theta^2(XX^\top)_{ij}, & i\neq j,
				\end{array}\right.
			\end{split}
		\end{equation}
		we know $\Delta(\mathbb{E}\widetilde{Y}\widetilde{Y}^\top) = \Delta(\theta^2XX^\top)$, i.e., $\mathbb{E}\widetilde{Y}\widetilde{Y}^\top$ and $\theta^2XX^\top$ share the off-diagonal part. Recall $D(\cdot)$ and $\Delta(\cdot)$ represent the diagonal and off-diagonal part of the matrix, respectively. 
		
		Next, we establish a concentration inequality for $\left\|\widetilde{Y}\widetilde{Y}^\top - \mathbb{E}\widetilde{Y}\widetilde{Y}^\top\right\|$. Note the following decomposition,
		\begin{equation}\label{eq:tildeY-B}
			\widetilde{Y}\widetilde{Y}^\top - \mathbb{E}\widetilde{Y}\widetilde{Y}^\top = \sum_{k=1}^{p_2}\left(\widetilde{Y}_{\cdot k}\widetilde{Y}_{\cdot k}^\top - \mathbb{E}\widetilde{Y}_{\cdot k}\widetilde{Y}_{\cdot k}^\top\right) \triangleq \sum_{k=1}^{p_2} B_k, \quad B_k = \widetilde{Y}_{\cdot k}\widetilde{Y}_{\cdot k}^\top - \mathbb{E}\widetilde{Y}_{\cdot k}\widetilde{Y}_{\cdot k}^\top.
		\end{equation}
		Based on the assumption,
		$$|\mathbb{E}\widetilde{Y}_{ij}|^\alpha = \theta\mathbb{E}|Y_{ij}|^\alpha \leq C\theta,\quad \alpha=1,2,3,4.$$
		Then,
		\begin{equation}\label{ineq:EBB^top-bound}
			\begin{split}
				0 \preceq & \mathbb{E}B_kB_k^\top = \mathbb{E}\left(\widetilde{Y}_{\cdot k}\widetilde{Y}_{\cdot k}^\top -\mathbb{E}\widetilde{Y}_{\cdot k}\widetilde{Y}_{\cdot k}^\top\right)^2 \\
				= & \mathbb{E}\widetilde{Y}_{\cdot k}\widetilde{Y}_{\cdot k}^\top \widetilde{Y}_{\cdot k}\widetilde{Y}_{\cdot k}^\top - (\mathbb{E}\widetilde{Y}_{\cdot k}\widetilde{Y}_{\cdot k}^\top)^2 \preceq \mathbb{E}\widetilde{Y}_{\cdot k}\widetilde{Y}_{\cdot k}^\top \widetilde{Y}_{\cdot k}\widetilde{Y}_{\cdot k}^\top,
			\end{split}
		\end{equation}
		\begin{equation}\label{ineq:EYYYY1}
			\left|\left(\mathbb{E}\widetilde{Y}_{\cdot k}\widetilde{Y}_{\cdot k}^\top \widetilde{Y}_{\cdot k}\widetilde{Y}_{\cdot k}^\top\right)_{ij}\right| = \left|\mathbb{E} \widetilde{Y}_{ik}\left(\sum_{s=1}^{p_1}\widetilde{Y}_{sk}^2\right)\widetilde{Y}_{jk}\right|. \end{equation}
		If $i\neq j$, 
		\begin{equation}\label{ineq:EYYYY2}
			\begin{split}
				& \left|\mathbb{E} \widetilde{Y}_{ik}\left(\sum_{s=1}^{p_1}\widetilde{Y}_{sk}^2\right)\widetilde{Y}_{jk}\right| = \left|\mathbb{E}\widetilde{Y}_{ik}^3\widetilde{Y}_{jk} + \mathbb{E}\widetilde{Y}_{ik}\widetilde{Y}_{jk}^3 + \sum_{s\neq i,j} \mathbb{E}\widetilde{Y}_{ik}\widetilde{Y}_{sk}^2\widetilde{Y}_{jk}\right| \\
				\leq & \mathbb{E}|\widetilde{Y}_{ik}|^3\cdot\mathbb{E}|\widetilde{Y}_{jk}| + \mathbb{E}|\widetilde{Y}_{ik}|\cdot \mathbb{E}|\widetilde{Y}_{jk}|^3 + \sum_{s\neq i,j} \mathbb{E}|\widetilde{Y}_{ik}|\cdot\mathbb{E}|\widetilde{Y}_{sk}|^2\cdot \mathbb{E}|\widetilde{Y}_{jk}| \\
				\leq & C(\theta^3(p_1-2) + 2\theta^2);
			\end{split}
		\end{equation}
		if $i = j$,
		\begin{equation*}
			\begin{split}
				\left|\mathbb{E} \widetilde{Y}_{ik}\left(\sum_{s=1}^{p_1}\widetilde{Y}_{sk}^2\right)\widetilde{Y}_{jk}\right| = \left|\mathbb{E}\widetilde{Y}_{ik}^4 + \sum_{s\neq i} \mathbb{E}\widetilde{Y}_{ik}^2\widetilde{Y}_{sk}^2\right| \leq C(\theta^2(p_1-1) + \theta).
			\end{split}
		\end{equation*}
		Then,
		\begin{equation}\label{ineq:sigma_B_upper}
			\begin{split}
				\sigma_B^2 \triangleq & \left\|\sum_{k=1}^{p_2}\mathbb{E} B_k^2 \right\| \leq \sum_{k=1}^{p_2}\left\|\mathbb{E} B_k^2 \right\| \overset{\eqref{ineq:EBB^top-bound}}{\leq} \sum_{k=1}^{p_2}\left\|\mathbb{E}\widetilde{Y}_{\cdot k}\widetilde{Y}_{\cdot k}^\top \widetilde{Y}_{\cdot k}\widetilde{Y}_{\cdot k}^\top \right\| \\
				\leq & \sum_{k=1}^{p_2} \left(\left\|D(\mathbb{E}\widetilde{Y}_{\cdot k}\widetilde{Y}_{\cdot k}^\top \widetilde{Y}_{\cdot k}\widetilde{Y}_{\cdot k}^\top)\right\| + \left\|\Delta(\mathbb{E}\widetilde{Y}_{\cdot k}\widetilde{Y}_{\cdot k}^\top \widetilde{Y}_{\cdot k}\widetilde{Y}_{\cdot k}^\top)\right\|\right) \\
				\leq & \sum_{k=1}^{p_2} \left(\max_{i}\left(\mathbb{E}\widetilde{Y}_{\cdot k}\widetilde{Y}_{\cdot k}^\top \widetilde{Y}_{\cdot k}\widetilde{Y}_{\cdot k}^\top\right)_{ii} + \left\|\Delta(\mathbb{E}\widetilde{Y}_{\cdot k}\widetilde{Y}_{\cdot k}^\top \widetilde{Y}_{\cdot k}\widetilde{Y}_{\cdot k}^\top)\right\|_F\right) \\
				\overset{\eqref{ineq:EYYYY1}}{\leq} & Cp_2\left(\theta^2p_1 + \theta + \left\{\sum_{1\leq i\neq j \leq p_1}\left(\mathbb{E}\widetilde{Y}_{\cdot k}\widetilde{Y}_{\cdot k}^\top \widetilde{Y}_{\cdot k}\widetilde{Y}_{\cdot k}^\top\right)^2_{ij}\right\}^{1/2}\right)\\
				\overset{\eqref{ineq:EYYYY2}}{\leq} & Cp_2\left(\theta^2p_1 + \theta + p_1(\theta^3p_1+\theta^2)\right) \\
				= & Cp_2(\theta+\theta^2p_1 + \theta^3p_1^2) \leq Cp_2\left(\theta + \theta^3p_1^2\right).
			\end{split}
		\end{equation}
		On the other hand, 
		\begin{equation*}
			\sigma_B^2 \geq \max_{1\leq i\leq p_1} \left(\sum_{k=1}^{p_2}\mathbb{E}B_k^2\right)_{ii},
		\end{equation*}
		\begin{equation*}
			\begin{split}
				\text{where}\quad \left(\mathbb{E}B_k^2\right)_{ii} = & \left(\mathbb{E}\widetilde{Y}_{\cdot k}\widetilde{Y}_{\cdot k}^\top \widetilde{Y}_{\cdot k}\widetilde{Y}_{\cdot k}^\top\right)_{ii} - \left(\left(\mathbb{E}\widetilde{Y}_{\cdot k}\widetilde{Y}_{\cdot k}^\top\right)^2\right)_{ii}\\
				= & \mathbb{E}\widetilde{Y}_{ik}^2\left(\sum_{s=1}^{p_1}\widetilde{Y}_{sk}^2\right) - \sum_{s=1}^{p_1}\left(\mathbb{E}\widetilde{Y}_{ik}\widetilde{Y}_{sk}\right)^2\\
				= & \mathbb{E}\widetilde{Y}_{ik}^4 + \sum_{s\neq i} \mathbb{E}\widetilde{Y}_{ik}^2 \cdot \mathbb{E}\widetilde{Y}_{sk}^2 - \left(\mathbb{E}\widetilde{Y}_{ik}^2\right)^2 - \sum_{s\neq i} (\mathbb{E}\widetilde{Y}_{ik})^2 (\mathbb{E}\widetilde{Y}_{sk})^2\\
				\geq & \mathbb{E}\widetilde{Y}_{ik}^4 - \left(\mathbb{E}\widetilde{Y}_{ik}^2\right)^2 = \theta \mathbb{E}Y_{ik}^4 - \theta^2\mathbb{E}Y_{ik}^2 \geq (\theta - \theta^2)\mathbb{E}Y_{ik}^4.
			\end{split}
		\end{equation*}
		Provided that $\theta \leq 1-c$ for constant $c>0$, we have
		\begin{equation}\label{ineq:sigma_B_lower}
			\begin{split}
				\sigma_B^2 \geq & \max_i \left(\sum_{k=1}^{p_2}\mathbb{E}B_k^2\right)_{ii} \geq (\theta-\theta^2)\max_i \sum_{k=1}^{p_2} \mathbb{E} Y_{ik}^4 \geq \frac{c\theta}{p_2}\max_i \left(\mathbb{E} \sum_{k=1}^{p_2}Y_{ik}^2\right)^2 \\
				\geq & \frac{c\theta}{p_1^2p_2} \left(\mathbb{E} \sum_{i=1}^{p_1}\sum_{k=1}^{p_2}Y_{ik}^2\right)^2 \geq \frac{c\theta}{p_1^2p_2}(\mathbb{E}\|X\|_F^2)^2 \geq \frac{c\theta r^2}{p_1^2p_2}\lambda_r^4(X). 
			\end{split}
		\end{equation}
		
		Next, we give an upper bound for $\left\|\|B_k\|\right\|_{\psi_1}$. Note that
		\begin{equation*}
			\begin{split}
				\|B_k\| = \left\|\widetilde{Y}_{\cdot k} \widetilde{Y}_{\cdot k}^\top - \mathbb{E}\widetilde{Y}_{\cdot k} \widetilde{Y}_{\cdot k}^\top\right\| \leq \|\widetilde{Y}_{\cdot k}\widetilde{Y}_{\cdot k}^\top\| + \|\mathbb{E}\widetilde{Y}_{\cdot k}\widetilde{Y}_{\cdot k}^\top\|\leq \|\widetilde{Y}_{\cdot k}\|_2^2 + \|\mathbb{E}\widetilde{Y}_{\cdot k} \widetilde{Y}_{\cdot k}^\top\|.
			\end{split}
		\end{equation*}
		In particular, we set $t = C_1\theta p_1$ for sufficiently large constant $C_1>0$. Then,
		\begin{equation*}
			\begin{split}
				& \mathbb{E}\exp\left(\|B_k\|/t\right) \leq \mathbb{E}\exp\left\{\left(\|\widetilde{Y}_{\cdot k}\|_2^2 + \|\mathbb{E}\widetilde{Y}_{\cdot k}\widetilde{Y}_{\cdot k}^\top\|\right)/t\right\} \\
				= & \mathbb{E}\exp\left(\|\widetilde{Y}_{\cdot k}\|_2^2/t\right) \cdot \exp\left(\|\mathbb{E}\widetilde{Y}_{\cdot k}\widetilde{Y}_{\cdot k}^\top\|/t\right)\\
				\leq & \mathbb{E}\exp\left(\|\widetilde{Y}_{\cdot k}\|_2^2/t\right) \cdot \mathbb{E} \exp\left(\|\widetilde{Y}_{\cdot k}\widetilde{Y}_{\cdot k}^\top\|/t\right) \quad \text{(by Jensen's inequality)}\\
				= & \left(\mathbb{E}\exp\left(\|\widetilde{Y}_{\cdot k}\|_2^2/t\right)\right)^2 =  \left(\mathbb{E}\prod_{i=1}^{p_1}\exp\left(\widetilde{Y}_{ik}^2/t\right)\right)^2\\
				= & \prod_{i=1}^{p_1}\left(\mathbb{E}\exp\left(\widetilde{Y}_{ik}^2/t\right)\right)^2 \leq \prod_{i=1}^{p_1}\left(\mathbb{E}\exp(0/t)1_{\{R_{ik}=0\}} + \mathbb{E}\exp(Y_{ik}^2/t)1_{\{R_{ik}=1\}}\right)^2\\
				\overset{\text{Lemma \ref{lm:sub-exponential}}}{\leq} &  \prod_{i=1}^{p_1}\left((1-\theta)+\theta(1+C/t)\right)^2 = \left(1 + C\theta/t\right)^{2p_1} \leq 1 + C\theta p_1/t \leq 1 + C/C_1 \leq 2,
			\end{split}
		\end{equation*}
		which means 
		\begin{equation}\label{ineq:U_B}
			U_B^{(1)} \triangleq \left\|\|B_k\|\right\|_{\psi_1} = \inf\{b>0: \mathbb{E}\exp(\|B_k\|/b)\leq 2\} \leq C_1\theta p_1.
		\end{equation} 
		\item[Step 2] Next, we derive an upper bound for $\|\Delta(\widetilde{Y}\widetilde{Y}^\top - \theta^2XX^\top)\|$ based on the results of the previous step. By the Bernstein-type matrix concentration inequality (c.f., Proposition 2 in \cite{koltchinskii2011nuclear}),  \eqref{ineq:sigma_B_upper}, \eqref{ineq:sigma_B_lower}, and \eqref{ineq:U_B}, we have
		\begin{equation*}
			\begin{split}
				\left\|\sum_{k=1}^{p_2} B_k\right\| \leq & C\max\left\{\sigma_B\sqrt{\log(p_1)}, U_B^{(1)}\log(p_1)\log\left(\frac{U_B^{(1)}}{\sigma_B/\sqrt{p_2}} \right)\right\}\\
				\leq & C\max\left\{\sqrt{p_2(\theta+\theta^3p_1^2)\log(p_1)}, \theta p_1\log(p_1)\log\left(\frac{C\theta^{1/2} p_1^2p_2}{r\lambda_r^2(X)}\right)\right\}
			\end{split}
		\end{equation*}
		with probability at least $1 - p_1^{-C}$. By  \eqref{eq:EYY} and \eqref{eq:tildeY-B}, we further have $P(\mathcal{A}) \geq 1-p_1^{-C}$, where $\mathcal{A}$ is the event such that
		\begin{equation*}
			\begin{split}
				\mathcal{A} = \Bigg\{& \left\|\Delta\left(\widetilde Y\widetilde Y^\top - \theta^2XX^\top\right)\right\| \\
				& \qquad \leq C\max\left\{\sqrt{p_2(\theta+\theta^3p_1^2)\log(p_1)}, \theta p_1 \log(p_1)\log\left(\frac{C\theta^{1/2} p_1^2p_2}{r\lambda_r^2(X)}\right)\right\} \Bigg\}
			\end{split}
		\end{equation*}
		\item[Step 3] Finally, we finalize the proof by using the robust $\sin\Theta$ theorem. When the event $\mathcal{A}$ holds, by Theorem \ref{th:diagonal-less}, we have the following theoretical guarantee for the HeteroPCA estimator applying to $\widetilde{Y}\widetilde{Y}^\top$,
		\begin{equation}\label{ineq:U-hat-U}
			\begin{split}
				& \left\|\sin\Theta(\widehat{U}, U)\right\| \leq \frac{C\|\Delta(\widetilde{Y}\widetilde{Y}^\top - \theta^2XX^\top)\|}{\lambda_r(\theta^2 XX^\top)}\wedge 1\\
				\leq & \frac{C\max\left\{\sqrt{p_2(\theta+\theta^3p_1^2)\log(p_1)}, \theta p_1 \log(p_1)\log\left(\frac{C\theta^{1/2}p_1^2p_2}{r\lambda_r^2(X)}\right)\right\}}{\theta^2\lambda_r^2(X)} \wedge 1.
			\end{split}
		\end{equation}
		We discuss the bound above in two cases: first, if $\lambda_r^2(X) \geq \sqrt{p_2p_1^2/\theta}$,
		\begin{equation*}
			\log\left(\frac{C\theta^{1/2}p_1^2p_2}{r\lambda_r^2(X)}\right) \leq C\log\left(p_1p_2\right);
		\end{equation*}
		second, if $\lambda_r^2(X) \leq \sqrt{p_2p_1^2/\theta}$, we have
		\begin{equation*}
			\begin{split}
				& \left\|\sin\Theta(\widehat{U}, U)\right\| \leq 1 \leq \frac{C\sqrt{p_2(\theta+\theta^3p_1^2)\log(p_1)}}{\theta^2\lambda_r^2(X)} \wedge 1.
			\end{split}
		\end{equation*}
		Thus, if $\mathcal{A}$ holds, we always have
		\begin{equation*}
			\left\|\sin\Theta(\widehat{U}, U)\right\| \leq \frac{C\max\left\{\sqrt{p_2(\theta+\theta^3p_1^2)\log(p_1)}, \theta p_1 \log(p_1)\log\left(p_1p_2\right)\right\}}{\theta^2\lambda_r^2(X)}\wedge 1.
		\end{equation*}
	\end{itemize}
\end{proof}

\begin{proof}[Proof of the Consistency Result in Remark \ref{rm:matrix-completion}]
	
	If $\|X\| \leq C\lambda_r(X)$ and $\|X\|_F^2 \geq c p_1p_2$, we have 
	$$\lambda_r^2(X) \geq \frac{1}{C}\|X\|^2 \geq \frac{1}{Cr}\sum_{i=1}^r \lambda_i^2(X) \geq \frac{1}{Cr}\|X\|_F^2 \geq \frac{p_1p_2}{Cr}.$$
	If
	$$\theta \gg \max\left\{\frac{r^{2/3}\log^{1/3}(p_1)}{p_1^{2/3}p_2^{1/3}}, \frac{r^2\log(p_1)}{p_2}, \frac{r\log(p_1)\log(p_1p_2)}{p_2}\right\},$$
	or equivalently
	$$\mathbb{E}|\Omega| \gg \max\left\{p_1^{1/3}p_2^{2/3}r^{2/3}\log^{1/3}(p_1), p_1r^2\log(p_1), p_1r\log(p_1)\log(p_1p_2)\right\},$$
	we have 
	\begin{equation*}
		\begin{split}
			& \mathbb{E}\left\|\sin\Theta(\widehat{U}, U)\right\| = \mathbb{E}\left\|\sin\Theta(\widehat{U}, U)\right\| 1_{\mathcal{A}} + \mathbb{E}\left\|\sin\Theta(\widehat{U}, U)\right\| 1_{\mathcal{A}^c} \\
			\leq & \frac{C\max\left\{\sqrt{p_2(\theta+\theta^3p_1^2)\log(p_1)}, \theta p_1 \log(p_1)\log(p_1p_2)\right\}}{\theta^2\lambda_r^2(X)} \wedge 1 + \bbP(\mathcal{A}^c) \\
			\leq & \frac{C\max\left\{\sqrt{p_2(\theta+\theta^3p_1^2)\log(p_1)}, \theta p_1 \log(p_1)\log(p_1p_2)\right\}}{C\theta^2p_1p_2/r} \wedge 1 + p_1^{-C}= o(1)
		\end{split}
	\end{equation*} 
	as $p_1, p_2\to \infty$. 
\end{proof}
\section{Technical Lemmas}\label{sec:technical-lemma}

\begin{Lemma}\label{lm:orthogonal-projection}
	Assume that $E\in \mathbb{R}^{p_1\times p_2}$ has independent sub-Gaussian entries, $\Var(E_{ij}) = \sigma_{ij}^2$, $\sigma_C^2 = \max_j \sum_i\sigma_{ij}^2$, $\sigma_R^2 = \max_i\sum_j \sigma_{ij}^2, \sigmax^2 = \max_{i,j}\sigma_{ij}^2$. Assume that 
	\[
	\|E_{ij}/\sigma_{ij}\|_{\psi_2} = \max_{q\geq 1}q^{-1/2}\{\mathbb{E}(|E_{ij}|/\sigma_{ij})^q\}^{1/q} \leq \kappa.
	\]
	Let $V\in \mathbb{O}_{p_2, r}$ be a fixed orthogonal matrix. Then
	\begin{equation}\label{ineq:lm-ZV-1}
		\bbP\left(\|EV\| \geq 2\left(\sigma_C + x\right)\right) \leq 2\exp\left(5r - \min\left\{\frac{x^4}{\kappa^4 \sigmax^2\sigma_C^2}, \frac{x^2}{\kappa^2\sigmax^2}\right\} \right),
	\end{equation}
	\begin{equation}\label{ineq:lm-ZV-2}
		\mathbb{E}\|EV\| \lesssim \sigma_C + \kappa r^{1/4}(\sigmax \sigma_C)^{1/2} + \kappa r^{1/2}\sigmax .
	\end{equation}
\end{Lemma}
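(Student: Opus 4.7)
The plan is to combine a standard $\varepsilon$-net reduction with a Bernstein-type deviation inequality for the squared Euclidean norm of a vector with independent sub-Gaussian coordinates. Since $\|EV\|=\sup_{w\in S^{r-1}}\|EVw\|_2$, I first pick a $1/2$-net $\mathcal{N}_r$ of the unit sphere in $\mathbb{R}^r$, of cardinality at most $5^r$, which yields $\|EV\|\leq 2\sup_{w\in\mathcal{N}_r}\|EVw\|_2$ and reduces the problem to controlling a single random vector $\xi=EVw$ for each fixed $w$.

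For a fixed $w$, the coordinates $\xi_i=\sum_j E_{ij}(Vw)_j$ are independent sub-Gaussian with $\|\xi_i\|_{\psi_2}^2\lesssim \kappa^2\sum_j\sigma_{ij}^2(Vw)_j^2$, since the rows of $E$ are independent. A direct computation using $\|Vw\|_2=1$ then gives $\mathbb{E}\|\xi\|_2^2=\sum_{i,j}\sigma_{ij}^2(Vw)_j^2\leq\sigma_C^2$ along with the Bernstein parameters $\sum_i\|\xi_i^2\|_{\psi_1}^2\lesssim\kappa^4\sigmax^2\sigma_C^2$ and $\max_i\|\xi_i^2\|_{\psi_1}\lesssim\kappa^2\sigmax^2$. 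Bernstein's inequality for centered sub-exponentials then controls $\|\xi\|_2^2-\mathbb{E}\|\xi\|_2^2$ by a mixed Gaussian/exponential tail; converting the square-deviation $(\sigma_C+x)^2-\sigma_C^2\geq 2x\sigma_C\vee x^2$ back to the $\ell_2$ scale and union-bounding over $\mathcal{N}_r$ (which contributes at most $r\log 5\leq 5r$ to the exponent once constants are absorbed into $\kappa$) delivers the tail inequality \eqref{ineq:lm-ZV-1}.

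For the expectation \eqref{ineq:lm-ZV-2}, I integrate the tail: $\mathbb{E}\|EV\|\leq 2\sigma_C+2\int_0^\infty\mathbb{P}(\|EV\|\geq 2(\sigma_C+x))\,dx$. The two regimes in the minimum cross at $x\asymp\kappa\sigma_C$, and the exponent overcomes $5r$ once $x$ passes the thresholds $\kappa r^{1/4}(\sigmax\sigma_C)^{1/2}$ (from the $x^4$ regime) and $\kappa r^{1/2}\sigmax$ (from the $x^2$ regime), which are precisely the two extra terms appearing in the bound; the tails above these thresholds integrate to constants and contribute no additional factors.

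The main obstacle is obtaining the right variance proxies, especially the key estimate $\sum_i(\sum_j\sigma_{ij}^2(Vw)_j^2)^2\leq\sigmax^2\sigma_C^2$ uniformly over unit $w$. This must be carried out by splitting one factor as $\max_i\sum_j\sigma_{ij}^2(Vw)_j^2\leq\sigmax^2\|Vw\|_2^2=\sigmax^2$ and the other as $\sum_{i,j}\sigma_{ij}^2(Vw)_j^2\leq\sigma_C^2$; this exploits $\|Vw\|_2=1$ rather than any control on $\|Vw\|_\infty$, and is the only step where the column-orthonormality of $V$ enters non-trivially. Once these mixed-norm estimates are in place, the rest of the argument reduces to routine Bernstein bookkeeping and tail integration.
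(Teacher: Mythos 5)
Your proposal is correct and follows essentially the same route as the paper: a $1/2$-net over the unit sphere in $\mathbb{R}^r$ of size at most $5^r$, a Bernstein-type bound on $\|EVw\|_2^2-\mathbb{E}\|EVw\|_2^2$ for each fixed $w$ using exactly the mixed-norm estimates $\max_i\sum_j\sigma_{ij}^2(Vw)_j^2\leq\sigmax^2$, $\sum_{i,j}\sigma_{ij}^2(Vw)_j^2\leq\sigma_C^2$, and $\sum_i(\sum_j\sigma_{ij}^2(Vw)_j^2)^2\leq\sigmax^2\sigma_C^2$, followed by a union bound and tail integration split at the two thresholds $\kappa r^{1/4}(\sigmax\sigma_C)^{1/2}$ and $\kappa r^{1/2}\sigmax$. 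The only cosmetic difference is that you phrase the Bernstein parameters via $\psi_1$-norms of $\xi_i^2$ while the paper uses the variances directly, which amounts to the same computation.
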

\begin{proof}[Proof of Lemma \ref{lm:orthogonal-projection}]
	We first construct $\mathcal{W}\subseteq \mathcal{B}_r = \{w\in \mathbb{R}^r: \|w\|_2\leq 1\}$ as the $\ell_2$ distance $\varepsilon$-net in $r$-dimensional space, such that $|\mathcal{W}|\leq (1+2/\varepsilon)^r$ \citep[Lemma 2.5]{vershynin2011spectral}. Since $E\in \mathbb{R}^{p_1\times p_2}$ has independent entries, for each fixed $w\in \mathcal{W}$, $EVw \in \mathbb{R}^{p_1}$ has independent entries and 
	$$\Var\left((EVw)_i\right) = \sum_{j=1}^{p_2} \Var(E_{ij})\cdot (Vw)_j^2 \leq \sum_{j=1}^{p_2} \sigmax^2 (Vw)_j^2 \leq \sigmax^2\|Vw\|_2^2 \leq \sigmax^2,$$
	$$\sum_{i=1}^{p_1} \Var\left((EVw)_i\right) = \sum_{i=1}^{p_1}\sum_{j=1}^{p_2} \Var(E_{ij})\cdot (Vw)_j^2 \leq \sum_{j=1}^{p_2}\sigma_C^2 (Vw)_j^2 \leq \sigma_C^2.$$
	Thus, we can rewrite the centralized $\|EVw\|_2^2$ as
	\begin{equation*}
		\begin{split}
			& \|EVw\|_2^2 - \sum_{i=1}^{p_1}\Var((EVw)_i) = \sum_{i=1}^{p_1} \left((EVw)_i^2/\Var((EVw)_i)-1\right) \cdot \Var((EVw)_i)
		\end{split}
	\end{equation*}
	Here, 
	\begin{equation*}
		\begin{split}
			& \sum_{i=1}^{p_1} \Var\left((EVw)_i\right) \leq \sigma_C^2, \quad \max_i \Var((EVw)_i) \leq \sigmax^2, \\
			& \sum_{i=1}^{p_1} \Var^2((EVw)_i) \leq \sigmax^2 \sum_{i=1}^{p_1}\Var((EVw)_i) \leq \sigmax^2 \sigma_C^2.
		\end{split}
	\end{equation*}
	By Bernstein-type concentration inequality \citep[Proposition 5.16]{vershynin2010introduction}, 
	\begin{equation*}
		\bbP\left(\|EVw\|_2^2 \geq \sigma_C^2 + t\right) \leq 2\exp\left(-\min\left\{\frac{t^2}{\kappa^4\sigmax^2\sigma_C^2}, \frac{t}{\kappa^2\sigmax^2}\right\}\right).
	\end{equation*}
	Applying the union bound for all $w\in \mathcal{W}$, we obtain
	\begin{equation*}
		\bbP\left(\max_{w\in \mathcal{W}}\|EVw\|_2^2 \geq \sigma_C^2 + t\right) \leq 2\left(1+2/\varepsilon\right)^r\exp\left(-\min\left\{\frac{t^2}{\kappa^4\sigma_C^2\sigmax^2}, \frac{t}{\kappa^2\sigmax^2}\right\}\right).
	\end{equation*}
	Next, suppose $u^\ast = \argmax_{\substack{u\in \mathbb{R}^r\\\|u\|_2\leq 1}} \|EVu\|_2$. By definition of $\varepsilon$-net, there exists $w\in\mathcal{W}$, such that $\|u^\ast-w\|_2\leq \varepsilon$ and
	\begin{equation*}
		\begin{split}
			\|EV\| = & \|EVu^\ast\|_2 \leq \|EVw\|_2 + \|EV(u^\ast-w)\|_2 \\
			\leq & \varepsilon\|EV\| + \max_{w\in \mathcal{W}} \|EVw\|_2.
		\end{split}
	\end{equation*}
	Namely, $\|EV\| \leq \max_{w\in \mathcal{W}} \|EVw\|_2/(1-\varepsilon)$. Setting $\varepsilon=1/2$, we have
	\begin{equation}
		\bbP\left(\|EV\| \geq 2\left(\sigma_C + x\right)\right) \leq 2\exp\left(5r - \min\left\{\frac{x^4}{\kappa^4 \sigmax^2\sigma_C^2}, \frac{x^2}{\kappa^2\sigmax^2}\right\} \right),
	\end{equation}
	which has proved \eqref{ineq:lm-ZV-1}. 
	
	Next, we consider the expectation upper bound. For any $x\geq 0$, $\bbP\left(\|EV\|\geq x\right)\leq 1$; for any $x\geq 2\sigma_C + 10\kappa\sqrt{r}\sigmax + 10\kappa r^{1/4}(\sigmax\sigma_C)^{1/2}$,
	\begin{equation*}
		\begin{split}
			& \bbP\left(\|EV\|\geq x\right) \leq 2\exp\left(5\log (r) - \min\left\{\frac{(x/2-\sigma_C)^4}{\kappa^4 \sigmax^2\sigma_C^2}, \frac{(x/2-\sigma_C)^2}{\kappa^2\sigmax^2}\right\} \right) \\
			\leq & 2\exp\left(5\log(r) - \frac{(x/2-\sigma_C)^4}{\kappa^4 \sigmax^2\sigma_C^2}\right) + 2\exp\left(5\log(r) - \frac{(x/2-\sigma_C)^2}{\kappa^2\sigmax^2}\right)\\
			\leq & 2\exp\left(- \frac{(x/2-\sigma_C)^4}{2\kappa^4 \sigmax^2\sigma_C^2}\right) + 2\exp\left( - \frac{(x/2-\sigma_C)^2}{2\kappa^2\sigmax^2}\right).\\
		\end{split}
	\end{equation*}
	Thus,
	\begin{equation*}
		\begin{split}
			& \mathbb{E}\|EV\| = \int_0^\infty \bbP\left(\|EV\|\geq x\right)dx \\
			= &  \int_0^{2\sigma_C + 10\kappa \sqrt{r} \sigmax + 10\kappa r^{1/4}(\sigmax\sigma_C)^{1/2}} \bbP\left(\|EV\|\geq x\right)dx\\
			& + \int_{2\sigma_C + 10\kappa \sqrt{r} \sigmax + 10\kappa r^{1/4}(\sigmax\sigma_C)^{1/2}}^\infty \bbP\left(\|EV\|\geq x\right)dx\\
			\leq & 2\sigma_C + 10\kappa\sqrt{r}\sigmax + 10\kappa r^{1/4}(\sigmax\sigma_C)^{1/2} \\
			& + \int_0^\infty \left\{2\exp\left(-\frac{(x/2)^4}{\kappa^4\sigmax^2\sigma_C^2}\right) + 2\exp\left(-\frac{(x/2)^2}{\kappa^2\sigmax^2}\right)\right\}dx\\
			\leq & 2\sigma_C + 10\kappa\sqrt{r}\sigmax + 10\kappa r^{1/4}(\sigmax\sigma_C)^{1/2} \\
			& + 4\kappa (\sigmax \sigma_C)^{1/2}\int_0^\infty e^{-x^4}dx + 4\kappa\sigmax \int_0^\infty -x^2 dx\\
			\leq & C\left(\sigma_C + \kappa r^{1/4}(\sigmax \sigma_C)^{1/2} + \kappa \sigmax\sqrt{r}\right).
		\end{split}
	\end{equation*}
	We thus have finished the proof of \eqref{ineq:lm-ZV-2}. \end{proof}

\begin{Lemma}[Spectral Norm of Projected Random Matrix with independent~Sub-exponential Entries]\label{lm:poisson-projection}
	Suppose $E\in \mathbb{R}^{p_1\times p_2}$ has independent sub-exponential entries, $\Var(E_{ij}) = \sigma_{ij}^2$, $\sigma_C^2 = \max_j \sum_i\sigma_{ij}^2$, $\sigmax^2 = \max_{i,j}\sigma_{ij}^2$. Assume that 
	\[
	\||E_{ij}|/\sigma_{ij}\|_{\psi_1} = \max_{q\geq 1}q^{-1}\{\mathbb{E}(E_{ij}/\sigma_{ij})^q\}^{1/q} \leq C.
	\]
	Suppose $V\in \mathbb{O}_{p_2, r}$ is a fixed orthogonal matrix. Then,
	\begin{equation*}
		\mathbb{E}\|EV\|^2 \lesssim \sigma_C^2 + r^2\sigmax^2.
	\end{equation*}
\end{Lemma}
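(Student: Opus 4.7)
The plan is to follow the $\varepsilon$-net strategy of Lemma~\ref{lm:orthogonal-projection}, but to substitute the sub-Gaussian Bernstein inequality with its sub-exponential counterpart. Because the tails are now heavier, the sub-Weibull$(1/2)$ concentration that controls $\|EVw\|_2^2$ has three regimes rather than two, and the $r$-dimensional $\varepsilon$-net has to be absorbed through the sub-Weibull regime; this is precisely what forces the final bound to scale like $r^2\sigmax^2$ rather than the sub-Gaussian rate $r\sigmax^2$ seen in Lemma~\ref{lm:orthogonal-projection}.

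Concretely, I would fix a $1/2$-net $\mathcal{W}$ of the unit ball in $\mathbb{R}^r$ with $|\mathcal{W}|\leq 5^r$, so that $\|EV\|^2\leq 4\max_{w\in\mathcal{W}}\|EVw\|_2^2$. For each $w\in\mathcal{W}$, set $a=Vw$ with $\|a\|_2\leq 1$ and note $\mathbb{E}\|Ea\|_2^2=\sum_{i,j}\sigma_{ij}^2 a_j^2\leq \sigma_C^2$. By the sub-exponential Bernstein inequality applied coordinatewise,
\[
P\bigl(|(Ea)_i|>t\bigr)\leq 2\exp\!\left(-c\min\!\left(\tfrac{t^2}{\nu_i^2},\,\tfrac{t}{b_i}\right)\right),\quad \nu_i^2:=\sum_j\sigma_{ij}^2 a_j^2,\ b_i:=\max_j\sigma_{ij}|a_j|,
\]
so that $(Ea)_i^2$ is sub-Weibull$(1/2)$ with $\|(Ea)_i^2\|_{\psi_{1/2}}\lesssim(\nu_i+b_i)^2\lesssim\sigmax^2$ (using $\nu_i^2\leq\sigmax^2\|a\|_2^2\leq\sigmax^2$ and $b_i\leq\sigmax$).

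Next I would invoke a Bernstein-type inequality for sums of independent centered sub-Weibull$(1/2)$ random variables to conclude
\[
P\Bigl(\bigl|\|Ea\|_2^2-\mathbb{E}\|Ea\|_2^2\bigr|>t\Bigr)\leq 2\exp\!\left(-c\min\!\left(\tfrac{t^2}{v^2},\,\sqrt{\tfrac{t}{L}}\right)\right),
\]
where $L\lesssim\sigmax^2$ and $v^2\lesssim\sum_i(\nu_i^2+b_i^2)^2\lesssim \sigma_C^2\sigmax^2$. The last bound exploits $b_i^2\leq\nu_i^2\leq\sigmax^2$ together with $\sum_i\nu_i^2\leq\sigma_C^2$, giving $\sum_i\nu_i^4\leq\sigmax^2\sum_i\nu_i^2\leq\sigmax^2\sigma_C^2$ and analogously $\sum_i b_i^4\leq\sigmax^2\sigma_C^2$. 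Taking a union bound over $\mathcal{W}$ then yields, for every $t>0$,
\[
P\bigl(\|EV\|^2>4(\sigma_C^2+t)\bigr)\leq 2\cdot 5^r\exp\!\left(-c\min\!\left(\tfrac{t^2}{\sigma_C^2\sigmax^2},\,\sqrt{\tfrac{t}{\sigmax^2}}\right)\right).
\]

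To finish, I would integrate this tail. The crucial threshold is $t_0\asymp r\sigma_C\sigmax+r^2\sigmax^2$: beyond $t_0$ both terms inside the min comfortably exceed $2r$, so the $5^r$ prefactor is absorbed, and AM--GM gives $r\sigma_C\sigmax\leq\tfrac12(\sigma_C^2+r^2\sigmax^2)$, while the residual sub-Weibull tail $\int_{t_0}^\infty e^{-c\sqrt{t/\sigmax^2}}\,dt$ contributes only $O(\sigmax^2)$; therefore
\[
\mathbb{E}\|EV\|^2=\int_0^\infty P\bigl(\|EV\|^2>u\bigr)\,du\lesssim \sigma_C^2+r^2\sigmax^2.
\]
The main subtlety is the bookkeeping around $\nu_i$ and $b_i$ in computing $v^2$: a naive estimate like $\sum_i b_i^2\leq p_1\sigmax^2$ would replace the desired $r^2\sigmax^2$ by an unacceptable $p_1\sigmax^2$, so the refined observation $\sum_i b_i^2\leq\sum_i\nu_i^2\leq\sigma_C^2$ is essential. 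Locating (or re-deriving) a sub-Weibull$(1/2)$ Bernstein inequality that is sharp in the variance parameter is the other technical ingredient on which the whole estimate hinges.
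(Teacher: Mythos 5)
Your proposal is correct and follows essentially the same route as the paper's proof: a $1/2$-net of size $5^r$, coordinatewise sub-exponential Bernstein showing $(EVw)_i^2-\|f_{i\cdot}\|_2^2$ is sub-Weibull$(1/2)$ with scale $\lesssim\sigmax^2$ and aggregate variance parameter $\lesssim\sigma_C^2\sigmax^2$ (your $\sum_i(\nu_i^2+b_i^2)^2$ is exactly the paper's $\sum_i\|f_{i\cdot}\|_2^4\leq\sigmax^2\sigma_C^2$), then a union bound absorbed by pushing the deviation scale to order $r$, and integration of the tail. The only difference is presentational — you use the tail form of the sub-Weibull$(1/2)$ concentration where the paper uses the equivalent moment form (a $q$-th moment bound from \cite{hao2018sparse} with $q\asymp r+\log(\sigma_C/\sigmax)$ followed by Markov) — and your bookkeeping that forces $r^2\sigmax^2$ rather than $r\sigmax^2$ is exactly where the paper's bound degrades as well.
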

\begin{proof}[Proof of Lemma \ref{lm:poisson-projection}]
	We divide the proof into four steps. 
	\begin{itemize}[leftmargin=*]
		\item[Step 1] First, we introduce an $\varepsilon$-net to reduce the matrix concentration problem to a simpler vector one. Let $\mathcal{W}\subseteq \mathcal{B}_r = \{w\in \mathbb{R}^r: \|w\|_2\leq 1\}$ be the $\ell_2$ distance $\varepsilon$-net in $r$-dimensional space, such that $\varepsilon=1/2$ and $|\mathcal{W}|\leq (1+2/(1/2))^r = 5^r$ \citep[Lemma 2.5]{vershynin2011spectral}. Since $E$ is a random matrix with independent entries, for any fixed $w\in \mathcal{W}$, the vector $EVw$ has independent entries, 
		\begin{equation}\label{ineq:ZVw}
			\begin{split}
				& \mathbb{E}\|EVw\|_2^2 = \sum_{i=1}^{p_1} \mathbb{E}(EVw)_i^2 = \sum_{i=1}^{p_1} \Var((EVw)_i) \\
				= & \sum_{i=1}^{p_1}\sum_{j=1}^{p_2} \Var(E_{ij})\cdot (Vw)_j^2 \leq \sum_{j=1}^{p_2}\sigma_C^2 (Vw)_j^2 = \sigma_C^2.
			\end{split}
		\end{equation}
		\item[Step 2] Then we establish the concentration for each entry of $EVw$, say $(EVw)_i$. Denote $f_{ij} = \sigma_{ij} (Vw)_j$. We have
		\begin{equation}
			\mathbb{E} (EVw)_i^2 = \Var\left(\sum_{j=1}^{p_2} E_{ij}(Vw)_j\right) = \sum_{j=1}^{p_2} \sigma_{ij}^2(Vw)_j^2 = \sum_{j=1}^{p_2} f_{ij}^2 = \|f_{i\cdot}\|_2^2,
		\end{equation}
		\begin{equation}\label{ineq:max-i-f}
			\max_i \|f_{i\cdot}\|_2^2 = \max_i \sum_{j=1}^{p_2} \sigma^2_{ij} (Vw)_j^2 \leq \left(\max_{i,j}\sigma^2_{ij}\right)\cdot  \sum_{j=1}^{p_2}(Vw)_j^2 \leq \sigmax^2,
		\end{equation}
		\begin{equation}\label{ineq:sum-i-f}
			\sum_{i=1}^{p_1}\|f_{i\cdot}\|_2^2 = \sum_{i=1}^{p_1}\sum_{j=1}^{p_2} \sigma_{ij}^2 (Vw)_j^2 \leq \sum_{j=1}^{p_2} \sigma_C^2 (Vw)_j^2 \leq \sigma_C^2.
		\end{equation}
		Note that
		\begin{equation*}
			\sum_{j=1}^{p_2}\frac{E_{ij}}{\sigma_{ij}} \cdot \sigma_{ij}(Vw)_j = \sum_{j=1}^{p_2} E_{ij}(Vw)_j = (EVw)_i.
		\end{equation*} 
		By Bernstein-type concentration inequality (c.f., \cite[Proposition 5.16]{vershynin2010introduction}),
		\begin{equation}\label{eq:ZVw-tail}
			\begin{split}
				& \bbP\left(\left|(EVw)_i\right| \geq t \right) =  \bbP\left(\left|\sum_{j=1}^{p_2}\frac{E_{ij}}{\sigma_{ij}} \cdot \sigma_{ij}(Vw)_j\right| \geq t \right) \\ 
				\leq &  2\exp\left(-c\min\left\{\frac{t^2}{\|f_{i\cdot}\|_2^2}, \frac{t}{\|f_{i\cdot}\|_\infty}\right\}\right) \leq 2\exp\left(-c\min\left\{\frac{t^2}{\|f_{i\cdot}\|_2^2}, \frac{t}{\|f_{i\cdot}\|_2}\right\}\right)\\
				\leq & 2\exp\left(-c\min\left\{\frac{t}{\|f_{i\cdot}\|_2}-\frac{1}{4}, \frac{t}{\|f_{i\cdot}\|_2}\right\}\right) \leq 2\exp\left(\frac{c}{4} - ct/\|f_{i\cdot}\|_2\right).
			\end{split}
		\end{equation}
		Next, we consider $T_i \triangleq (EVw)_i^2 - \|f_{i\cdot}\|_2^2$, $i=1,\ldots, p_1$ and aim to establish the tail property of $T_i$. Suppose $C_1$ and $\widetilde{C}$ and two constants to be determined later. Then,
		\begin{equation*}
			\begin{split}
				& \mathbb{E}\exp\left(\frac{|T_i|^{1/2}}{C_1\|f_{i\cdot}\|_2}\right) = \mathbb{E}\exp\left(\frac{\left|(EVw)_i^2 - \|f_{i\cdot}\|_2^2\right|^{1/2}}{C_1\|f_{i\cdot}\|_2}\right)\\
				\leq & \mathbb{E}\exp\left(\frac{|(EVw)_i| + \|f_{i\cdot}\|_2}{C_1\|f_{i\cdot}\|_2}\right) \\
				= & \int_0^\infty \frac{\partial}{\partial t}\left(\mathbb{E}\exp\left(\frac{t + \|f_{i\cdot}\|_2}{C_1\|f_{i\cdot}\|_2}\right)\right) \bbP\left(|(EVw)_i|\geq t\right)dt\\
				\overset{\eqref{eq:ZVw-tail}}{\leq} & \int_0^{\widetilde{C}\|f_{i\cdot}\|_2} \frac{1}{C_1\|f_{i\cdot}\|_2}\exp\left(\frac{t + \|f_{i\cdot}\|_2}{C_1\|f_{i\cdot}\|_2}\right)dt \\
				& + \int_{\widetilde{C}\|f_{i\cdot}\|_2}^\infty \frac{1}{C_1\|f_{i\cdot}\|_2}\exp\left(\frac{t + \|f_{i\cdot}\|_2}{C_1\|f_{i\cdot}\|_2}\right) 2\exp\left(\frac{c}{4} - \frac{ct}{\|f_{i\cdot}\|_2}\right) dt\\
				\leq & \frac{\widetilde{C}}{C_1} \exp\left(\frac{\widetilde{C}+1}{C_1}\right) + \int_{\widetilde{C}\|f_{i\cdot}\|_2}^\infty \frac{2\exp(\frac{c}{4}+\frac{1}{C_1})}{C_1\|f_{i\cdot}\|_2}\exp\left(-\left(c-\frac{1}{C_1}\right)\frac{t}{\|f_{i\cdot}\|_2}\right)dt\\
				= & \frac{\widetilde{C}}{C_1}\exp\left(\frac{\widetilde{C}+1}{C_1}\right) + \frac{2\exp\left(\frac{c}{4} + \frac{1}{C_1} - \left(c-\frac{1}{C_1}\right)\widetilde{C}\right)}{cC_1 - 1}.
			\end{split}
		\end{equation*}
		Let $\widetilde{C} = \sqrt{C_1}$. We can see for large constant $C_1$, $\mathbb{E}\exp\left(\frac{|T_i|^{1/2}}{C_1\|f_{i\cdot}\|_2}\right) \leq 2$. Then,
		\begin{equation*}
			\left\|T_i\right\|_{\psi_{1/2}} \triangleq \inf\left\{\alpha > 0: \mathbb{E} \exp\left(|T_{i}/\alpha|^{1/2}\right) \leq 2\right\} \leq C_1^2 \|f_{i\cdot}\|_2^2.
		\end{equation*}
		
		\item[Step 3] In this step we establish the concentration inequality for the $\ell_2$ norm of the vector $EVw$. Noting that $\mathbb{E}T_i=0$, by the tail inequality for sum of heavy tail random variables (c.f., Lemma 6 in \cite{hao2020sparse}), we have for any $q\geq 2$, 
		\begin{equation*}
			\begin{split}
				& \left(\mathbb{E}\left|\|EVw\|_2^2 - \mathbb{E}\|EVw\|_2^2\right|^q\right)^{1/q} = \left(\mathbb{E}\left|\sum_{i=1}^{p_1} T_i\right|^q\right)^{1/q} \\
				\leq & C\left(\sqrt{q}\left(\sum_{i=1}^{p_1}\|f_{i\cdot}\|_2^4\right)^{1/2} + q^2\left(\sum_{i=1}^{p_1}\|f_{i\cdot}\|_2^{2q}\right)^{1/q}\right)\\
				\leq & C\Bigg(\sqrt{q}\left(\sum_{i=1}^{p_1}\|f_{i\cdot}\|_2^2 \cdot \max_i \|f_{i\cdot}\|_2^2\right)^{1/2}\\ 
				& + q^2 \left(\sum_{i=1}^{p_1}\|f_{i\cdot}\|_2^2\cdot \max_i \|f_{i\cdot}\|_2^{2q-2}\right)^{1/q}\Bigg)\\
				\overset{\eqref{ineq:max-i-f}\eqref{ineq:sum-i-f}}{\leq} & C\sqrt{q} \sigma_C\sigmax + Cq^2 \sigma_C^{2/q} \sigmax^{(2q-2)/q} = C\sqrt{q} \sigma_C\sigmax + Cq^2(\sigma_C/\sigmax)^{2/q} \sigmax.
			\end{split}
		\end{equation*}
		Set 
		\begin{equation}\label{eq:q}
			q = 2(r+1) + \log(\sigma_C/\sigmax),
		\end{equation} 
		we have
		\begin{equation*}
			\begin{split}
				& \left(\mathbb{E}\left|\|EVw\|_2^2 - \mathbb{E}\|EVw\|_2^2\right|^q\right)^{1/q} \leq C\sqrt{q}\sigma_C\sigmax + Cq^2\sigmax \triangleq G. 
			\end{split}
		\end{equation*}
		By Markov inequality,
		\begin{equation*}
			\begin{split}
				& \bbP\left(\left|\|EVw\|_2^2 - \mathbb{E}\|EVw\|_2^2\right| \geq t\right) = \bbP\left(\left|\|EVw\|_2^2 - \mathbb{E}\|EVw\|_2^2\right|^q \geq t^q\right) \\
				\leq & \frac{\mathbb{E}\left|\|EVw\|_2^2 - \mathbb{E}\|EVw\|_2^2\right|^q}{t^q} = \frac{G^q}{t^q}.
			\end{split}
		\end{equation*}
		
		\item[Step 4] Finally, we apply the $\varepsilon$-net technique to derive the upper bound for $\|EV\|_2^2 = \max_{\|w\|_2\leq 1} \|EVw\|_2^2$ from the concentration inequality of $\|EVw\|_2^2$ with fixed $w$. Applying the union bound, we have
		\begin{equation}\label{ineq:union-bound}
			\begin{split}
				& \bbP\left(\max_{w\in \mathcal{W}} \left|\|EVw\|_2^2 - \mathbb{E}\|EVw\|_2^2\right| \geq t\right)\\ 
				\leq & |\mathcal{W}|\cdot \bbP\left(\left|\|EVw\|_2^2 - \mathbb{E}\|EVw\|_2^2\right| \geq t\right) \leq \frac{G^q 5^r}{t^q}.
			\end{split}
		\end{equation}
		Suppose $u^\ast = \argmax_{\substack{u\in \mathbb{R}^r\\\|u\|_2\leq 1}} \|EVu\|_2$. By definition of $\varepsilon$-net, there exists $w\in \mathcal{W}$, such that $\|u^\ast - w\|_2\leq 1/2$. Then,
		$$\|EV\| = \|EVu^\ast\|_2 \leq \|EVw\|_2 + \|EV(u^\ast - w)\|_2 \leq  \max_{w\in \mathcal{W}}\|EVw\|_2 + \|EV\|/2 ,$$
		which means $\|EV\| \leq \max_{w\in \mathcal{W}}\|EVw\|_2/(1-1/2) = 2\max_{w\in \mathcal{W}}\|EVw\|_2$. Therefore,
		\begin{equation*}
			\begin{split}
				& \mathbb{E}\|EV\|_2^2 \leq 4\mathbb{E}\max_{w\in \mathcal{W}}\|EVw\|_2^2 \leq 4\max_{w\in \mathcal{W}}\left(\mathbb{E}\|EVw\|_2^2 + \left|\|EVw\|_2^2 - \mathbb{E}\|EVw\|_2^2\right|\right)\\
				\overset{\eqref{ineq:sum-i-f}}{\leq} & 4\sigma_C^2 + 4\int_0^\infty \bbP\left(\left|\|EVw\|_2^2 - \mathbb{E}\|EVw\|_2^2\right| \geq t\right)dt\\
				\leq & 4\sigma_C^2 + 4 \int_0^{5G} 1\cdot dt + 4 \int_{5G}^\infty \bbP\left(\left|\|EVw\|_2^2 - \mathbb{E}\|EVw\|_2^2\right| \geq t\right)dt\\
				\overset{\eqref{ineq:union-bound}}{\leq} & 4\sigma_C^2 + 20G + 4\int_{5G}^\infty \frac{G^q5^r}{t^q}dt = 4\sigma_C^2 + 20G + \frac{G(q-1)}{5^{q-1-r}} \\
				\overset{\eqref{eq:q}}{\leq} & 4\sigma_C^2 + 20G + \frac{G(q-1)}{5^{(q-1)/2}} \leq 4\sigma_C^2 + CG \\
				= & 4\sigma_C^2 + C\left(r + \log(\sigma_C/\sigmax)\right)\sigma_C\sigmax + C\left(r + \log(\sigma_C/\sigmax)\right)^2\sigmax^2.
			\end{split}
		\end{equation*}
		Finally, by arithmetic-geometric inequality,  
		\begin{equation*}
			\begin{split}
				& (r+\log(\sigma_C/\sigmax))\sigma_C\sigmax \leq \frac{1}{2}\sigma_C^2 + \frac{1}{2}(r+\log(\sigma_C/\sigmax))^2\sigmax^2\\ \lesssim & \sigma_C^2 + r^2 + \log^2(\sigma_C/\sigmax)\sigmax^2,
			\end{split}
		\end{equation*}
		\begin{equation*}
			\log^2(\sigma_C/\sigmax)\sigmax^2 \lesssim (\sigma_C/\sigmax)^2\sigmax^2 = \sigma_C^2,
		\end{equation*}
		we have
		\begin{equation*}
			\mathbb{E}\|EV\|_2^2 \lesssim \sigma_C^2 + r^2\sigmax^2.
		\end{equation*}
	\end{itemize}
\end{proof}

The following lemma provides a sharp bound for the operator norm of matrix sparsification. 
\begin{Lemma}\label{lm:diagonal-less-spectral-norm}
	If $M\in \mathbb{R}^{m_1\times m_2}$, $\rank(M) = r$, $\mathcal{G} \subseteq [m_1]\times [m_2]$, $\max_i|\{j: (i,j)\in \mathcal{G}\}|\leq b, \max_j|\{i: (i,j)\in \mathcal{G}\}|\leq b$, then we have
	$$\|G(M)\| \leq \sqrt{b\wedge r}\|M\|, \quad \|\Gamma(M)\| \leq (\sqrt{b\wedge r}+1)\|M\|.$$
	
	In particular, if $M\in \mathbb{R}^{p\times p}$ is any square matrix and $\Delta(M)$ is the matrix $M$ with diagonal entries set to 0, then 
	$$\|\Delta(M)\| \leq 2\|M\|.$$ 
	Here, the factor ``2" in the statement above cannot be improved.
\end{Lemma}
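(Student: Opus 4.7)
The plan is to derive the two bounds $\|G(M)\| \leq \sqrt{r}\,\|M\|$ and $\|G(M)\| \leq \sqrt{b}\,\|M\|$ separately and combine them by taking the minimum. The $\sqrt{r}$ bound is immediate from Frobenius-norm monotonicity: zeroing out entries can only decrease the Frobenius norm, so $\|G(M)\| \leq \|G(M)\|_F \leq \|M\|_F \leq \sqrt{r}\,\|M\|$, where the last inequality uses $\mathrm{rank}(M)=r$.

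The $\sqrt{b}$ bound is the substantive step. The natural idea of decomposing $G(M)$ into $b$ matchings via K\"onig's edge-coloring theorem and then summing operator norms only yields the weaker bound $b\,\|M\|$, so I would instead bound $\|G(M)x\|_2$ directly for an arbitrary unit vector $x \in \mathbb{R}^{m_2}$. The key observation is that $(G(M)x)_i = \sum_{j:(i,j)\in\mathcal{G}} M_{ij} x_j = (Mx^{(i)})_i$, where $x^{(i)}$ is the restriction of $x$ to the row support $\{j:(i,j)\in\mathcal{G}\}$, which has at most $b$ indices. This gives the pointwise estimate $|(G(M)x)_i| \leq \|M\|\,\|x^{(i)}\|_2$. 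Squaring and summing over $i$, and applying the column-sparsity bookkeeping $\sum_i \|x^{(i)}\|_2^2 = \sum_j x_j^2\,|\{i:(i,j)\in\mathcal{G}\}| \leq b\,\|x\|_2^2$, yields $\|G(M)x\|_2^2 \leq b\,\|M\|^2\,\|x\|_2^2$, hence $\|G(M)\| \leq \sqrt{b}\,\|M\|$.

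For $\Gamma(M) = M - G(M)$ the bound $\|\Gamma(M)\| \leq (1+\sqrt{b\wedge r})\|M\|$ is immediate from the triangle inequality, and specializing to the diagonal index set $\mathcal{G}=\{(i,i):i\in[p]\}$ (where $b=1$) gives $\|\Delta(M)\| \leq 2\,\|M\|$. For the sharpness of the constant $2$, I would exhibit the family $M_n = \tfrac{1}{2}I_n - \tfrac{1}{n}J_n$, where $J_n$ is the $n\times n$ all-ones matrix. Diagonalizing on the subspace $\mathrm{span}(1_n)$ and its orthogonal complement shows $\|M_n\| = 1/2$, while $\Delta(M_n) = \tfrac{1}{n}(I_n - J_n)$ has $\|\Delta(M_n)\| = (n-1)/n$, giving $\|\Delta(M_n)\|/\|M_n\| = 2(n-1)/n \to 2$. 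The only mild obstacle in the whole argument is recognizing that localizing the test vector $x$ to the row support at the start — rather than attempting a block decomposition of $G(M)$ — is precisely what produces the sharp $\sqrt{b}$ rather than $b$.
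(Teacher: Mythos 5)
Your proof is correct. The $\sqrt{r}$ bound via the Frobenius norm, the triangle-inequality step for $\Gamma(M)$, the specialization to the diagonal (where $b=1$), and the sharpness example are all essentially identical to the paper's (your matrix $\tfrac12 I_n-\tfrac1n J_n$ is the paper's $1_p1_p^\top-\tfrac{p}{2}I_p$ up to the scalar $-1/n$). The genuine difference is in the key $\sqrt{b}$ bound. The paper bounds the $\ell_\infty\to\ell_\infty$ and $\ell_1\to\ell_1$ operator norms of $G(M)$ by $\sqrt{b}\,\|M\|$ separately, each via Cauchy--Schwarz over the at most $b$ surviving entries of a row (resp.\ column), and then invokes the Riesz--Thorin interpolation theorem to get $\|G(M)\|\leq(\|G(M)\|_\infty\|G(M)\|_1)^{1/2}\leq\sqrt{b}\,\|M\|$. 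Your argument bypasses interpolation: Cauchy--Schwarz applied rowwise gives $|(G(M)x)_i|\leq\|M_{i\cdot}\|_2\,\|x^{(i)}\|_2\leq\|M\|\,\|x^{(i)}\|_2$, and the double count $\sum_i\|x^{(i)}\|_2^2=\sum_j x_j^2\,|\{i:(i,j)\in\mathcal{G}\}|\leq b\|x\|_2^2$ finishes it. This is more elementary (no interpolation theorem needed) and in fact slightly stronger: it shows the $\sqrt{b}$ factor requires only the column-sparsity of $\mathcal{G}$, whereas the paper's interpolation argument uses both sparsity conditions. Your aside that a K\"onig-type decomposition into matchings only yields $b\|M\|$ is an accurate diagnosis of why the naive route fails.
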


\begin{proof}[Proof of Lemma \ref{lm:diagonal-less-spectral-norm}] If $M\in \mathbb{R}^{m_1\times m_2}$, $M$ can be seen as a linear operator from $\mathbb{R}^{m_2}$ to $\mathbb{R}^{m_1}$. Note that
	\begin{equation*}
		\begin{split}
			\|G(M)\|_{\infty } \triangleq & \max_{x\in \mathbb{R}^{m_2}}\frac{\|G(M)x\|_\infty}{\|x\|_\infty} = \max_i \sum_{j=1}^{m_2} |G(M_{ij})| = \max_i \sum_{j: (i, j)\in \mathcal{G}} |M_{ij}| \\
			\leq & \max_i \sqrt{b}\left(\sum_{j: (i, j)\in \mathcal{G}} |M_{ij}|^2\right)^{1/2} \leq \sqrt{b}\max_i \|M_{i\cdot}\|_2 \leq \sqrt{b}\|M\|;\\
			\|G(M)\|_{1} \triangleq & \max_{x\in \mathbb{R}^{m_2}} \frac{\|G(M)x\|_1}{\|x\|_1} = \max_j \sum_{i=1}^{m_1}|G(M_{ij})| = \max_j \sum_{i: (i, j)\in \mathcal{G}}|M_{ij}| \\
			\leq & \max_j \sqrt{b}\left(\sum_{i: (i,j)\in \mathcal{G}}|M_{ij}|^2\right)^{1/2} \leq \sqrt{b}\max_j \|M_{\cdot j}\|_2 \leq \sqrt{b}\|M\|.
		\end{split}
	\end{equation*}
	By Riesz-Thorin interpolation theorem \cite[Chapter 4, Section 1.2]{katznelson2004introduction},
	\begin{equation*}
		\begin{split}
			\|G(M)\| \leq \left(\|G(M)\|_\infty \cdot \|G(M)\|_1\right)^{1/2} \leq \sqrt{b}\|M\|.
		\end{split}
	\end{equation*}
	Since $\rank(M) = r$, we also have
	\begin{equation*}
		\begin{split}
			\|G(M)\| \leq \|G(M)\|_F \leq \|M\|_F \leq \sqrt{r}\|M\|.
		\end{split}
	\end{equation*}
	The previous two inequalities yield
	\begin{equation*}
		\|G(M)\| \leq \sqrt{b\wedge r}\|M\|.
	\end{equation*}
	Finally,
	\begin{equation*}
		\|\Gamma(M)\| = \|M - G(M)\| \leq ( \sqrt{b\wedge r}+1)\|M\|.
	\end{equation*}
	
	In particular, note that $\Delta(M) = M - D(M)$, $\|D(M)\| = \max_{i} |M_{ii}| \leq \|M\|$, we have 
	\begin{equation*}
		\|\Delta M \| = \|M - D(M)\| \leq \|M\| + \|D(M)\|  \leq 2\|M\|.
	\end{equation*}
	
	Finally we provide an example to illustrate that the factor ``2" above is sharp. Suppose $p\geq 2$, $1_p$ is the $p$-dimensional all-one vector. Set $M = 1_p1_p^\top - \frac{p}{2}I_p$. Then, $\Delta(M) = 1_p 1_p^\top - I_p$. Since the eigenvalues of $1_p1_p^\top$ are $\{p, 0, \ldots, 0\}$, the eigenvalues of $(\Delta(M) = 1_p1_p^\top - I_p)$ and $(M = 1_p1_p^\top - \frac{p}{2}\cdot I_p)$ are $\{p-1, -1, \ldots, -1\}$ and $\{p/2, -p/2, \ldots, -p/2\}$, respectively. At this point,
	$$\frac{\|\Delta M\|}{\|M\|} = \frac{p-1}{p/2} = 2 - \frac{2}{p}.$$
	As $p\to \infty$, we can see the statement $\|\Delta M \| \leq (2-\varepsilon)\|M\|$ does not hold in general for any $\varepsilon>0$. \end{proof}

\begin{Lemma}\label{lm:epsilon}
	Suppose $E_{p_1}\subseteq \mathbb{S}^{p_1-1}$, $E_{p_2} \subseteq \mathbb{S}^{p_2-1}$ are $\varepsilon$-net in $p_1$- and $p_2$-dimensional spheres, $\varepsilon<1/2$, then for any symmetric matrix $A\in \mathbb{R}^{p_1\times p_1}$ and general $B\in \mathbb{R}^{p_1\times p_2}$,
	\begin{equation*}
		\|A\| \leq \frac{\max_{v \in E_{p_1}} |v^\top Av|}{1-2\varepsilon},\quad \|B\| \leq \frac{\max_{u\in E_{p_1}, v\in E_n} u^\top B v}{1-2\varepsilon}.
	\end{equation*}
\end{Lemma}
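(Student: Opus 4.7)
The approach is the classical $\varepsilon$-net discretization trick: realize the operator norm as a supremum over the unit sphere, reduce to the sup over a finite net by a perturbation argument, and absorb the perturbation into the left-hand side since $\varepsilon<1/2$. No deep input is needed; the entire proof is bilinear algebra.

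For the symmetric bound, I would let $v^\ast\in\mathbb{S}^{p_1-1}$ be a unit vector with $|v^{\ast\top}Av^\ast|=\|A\|$, which exists because $A$ is symmetric (Rayleigh quotient characterization). By the $\varepsilon$-net property choose $v\in E_{p_1}$ with $\|v-v^\ast\|_2\leq\varepsilon$ and expand
\begin{equation*}
v^{\ast\top}Av^\ast - v^\top Av = (v^\ast-v)^\top Av^\ast + v^\top A(v^\ast - v),
\end{equation*}
whose absolute value is bounded by $\|A\|\,\|v^\ast-v\|_2(\|v^\ast\|_2+\|v\|_2)\leq 2\varepsilon\|A\|$. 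Hence $\|A\|\leq |v^\top Av|+2\varepsilon\|A\|\leq \max_{v\in E_{p_1}}|v^\top Av|+2\varepsilon\|A\|$, and rearranging yields the claim since $1-2\varepsilon>0$.

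For the rectangular bound, pick top singular vectors $u^\ast\in\mathbb{S}^{p_1-1}, v^\ast\in\mathbb{S}^{p_2-1}$ so that $u^{\ast\top}Bv^\ast=\|B\|$, and choose $u\in E_{p_1}, v\in E_{p_2}$ within distance $\varepsilon$ of $u^\ast, v^\ast$. The analogous bilinear expansion
\begin{equation*}
u^{\ast\top}Bv^\ast - u^\top Bv = (u^\ast-u)^\top Bv^\ast + u^\top B(v^\ast-v)
\end{equation*}
gives $|u^{\ast\top}Bv^\ast - u^\top Bv|\leq \|B\|(\|u^\ast-u\|_2\|v^\ast\|_2+\|u\|_2\|v^\ast-v\|_2)\leq 2\varepsilon\|B\|$. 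Since we may replace $u$ by $-u$ (or choose signs in the net) to make $u^\top Bv$ nonnegative, this bounds $\|B\|\leq \max_{u\in E_{p_1},v\in E_{p_2}}u^\top Bv+2\varepsilon\|B\|$, and rearrangement completes the proof.

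There is no substantive obstacle; the only subtlety is the sign convention in the second part, which can be handled either by noting that any $\varepsilon$-net of the sphere can be taken symmetric under $u\mapsto -u$ (without loss of generality), or by interpreting the max implicitly over sign choices. Both formulations are standard and equivalent up to harmless notation.
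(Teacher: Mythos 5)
Your proof is correct and follows essentially the same route as the paper's: extremize over the sphere via the Rayleigh quotient (resp.\ top singular vectors), approximate the extremizer by a net point, expand the bilinear form, and absorb the $2\varepsilon\|\cdot\|$ error using $\varepsilon<1/2$. The sign worry in the rectangular case is a non-issue, since the expansion already gives $u^\top Bv\geq(1-2\varepsilon)\|B\|$ for the chosen net points, which is exactly the claimed bound.
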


\begin{proof}[Proof of Lemma \ref{lm:epsilon}] Suppose $\widetilde{v}\in \mathbb{S}^{p_1-1}$ is the eigenvector of $A$ corresponding to the eigenvalue with largest absolute value, then $\widetilde{v}$ satisfies $\widetilde{v}^\top A\widetilde{v} = \|A\|$. Since $E_{p_1}$ is an $\varepsilon$-net of $\mathbb{S}^{p_1-1}$, there exists $u\in E_{p_1}$ such that $\|u - \widetilde{v}\| \leq \varepsilon$. Thus,
	\begin{equation*}
		\begin{split}
			\|A\| = & \left|\widetilde{v}^\top A\widetilde{v}\right| \leq \left|\widetilde{v}^\top A(\widetilde{v}-v)\right| + \left|(\widetilde{v} - v)^\top A v\right| + \left|v^\top Av\right|\\
			\leq & \|\widetilde{v}\|_2\cdot \|\widetilde{v}-v\|_2\cdot \|A\| + \|v\|_2\cdot \|\widetilde{v}-v\|_2\cdot \|A\| + \max_{v\in E_{p_1}}\left|v^\top Av\right|\\
			\leq & 2\varepsilon\|A\| + \max_{v\in E_{p_1}}\left|v^\top A v\right|,
		\end{split}
	\end{equation*}
	which implies $\|A\|\leq \frac{1}{1-2\varepsilon} \max_{v\in E_{p_1}} \left|v^\top Av\right|$. Similarly, suppose $\bar{u}$ and $\bar{v}\in \mathbb{S}^{p_1-1}$ are the left and right singular vectors of $B$ corresponding to its largest singular value. Then $B$ satisfies $\bar{u}^\top B\bar{v}_2 = \|B\|$, and there exists $u\in E_{p_1}$ and $v\in E_{p_2}$ such that $\|\bar{u}-u\|_2\leq \varepsilon$, $\|\bar{v} - u\|_2\leq \varepsilon$. Therefore,
	\begin{equation*}
		\begin{split}
			\|B\| = & \widetilde{u}^\top B \widetilde{v} \leq u^\top B v + (\widetilde{u}-u)^\top B v + \widetilde{u}^\top B (\widetilde{v} - v)\\
			\leq & \max_{u\in E_{p_1}, v\in E_{p_2}} u^\top Bv + \|\widetilde{u}-u\|_2 \|B\|\cdot \|v\| + \|\widetilde{u}\|_2\cdot \|B\|\cdot \|\widetilde{v} - v\|_2\\
			\leq & 2\varepsilon\|B\| + \max_{u\in E_{p_1}, v\in E_{p_2}} u^\top Bv,
		\end{split}
	\end{equation*}
	which implies $\|B\|\leq \frac{1}{1-2\varepsilon}\max_{u\in E_{p_1}, v\in E_{p_2}}u^\top B v$. \end{proof}

The following technical tool characterizes the spectral and Frobenius norm of projections after SVD. The proof is provided in \cite[Lemma 6]{zhang2018tensor}.
\begin{Lemma}\label{lm:P_hat_U M}
	Suppose $M, E\in \mathbb{R}^{p_1\times p_2}$, $\rank(M)=r$. If $\widehat{U} = \SVD_r(M+E)$ and $\widehat{U}_{\perp}$ is the orthogonal complement of $\widehat{U}$, then
	\begin{equation*}
		\left\|P_{\widehat{U}_\perp}M\right\| \leq 2\|E\|, \quad \left\|P_{\widehat{U}_\perp}M\right\|_F \leq 2\min\{\sqrt{r}\|E\|, \|E\|_F\}.
	\end{equation*}
\end{Lemma}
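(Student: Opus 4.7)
The plan is to start from the key identity
\[
P_{\hat U_\perp} M \;=\; P_{\hat U_\perp}(M+E) \;-\; P_{\hat U_\perp} E,
\]
and then bound each of the two pieces separately. Since $\hat U$ is by definition the span of the top $r$ left singular vectors of $M+E$, the matrix $P_{\hat U_\perp}(M+E) = (I-\hat U\hat U^\top)(M+E)$ is exactly the residual of the best rank-$r$ approximation of $M+E$ (in either spectral or Frobenius norm, by Eckart-Young-Mirsky). This observation is what makes both halves of the conclusion accessible.

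For the spectral-norm inequality, I would note that $\|P_{\hat U_\perp}(M+E)\|$ equals $\lambda_{r+1}(M+E)$. Since $M$ has rank $r$, Weyl's inequality gives $\lambda_{r+1}(M+E)\le \lambda_{r+1}(M)+\|E\| = \|E\|$. Combined with the trivial bound $\|P_{\hat U_\perp} E\|\le \|E\|$, the triangle inequality yields $\|P_{\hat U_\perp} M\|\le 2\|E\|$.

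For the Frobenius-norm inequality there are two sub-bounds. To get $2\|E\|_F$, I would again use Eckart-Young-Mirsky: since $M$ itself is a rank-$r$ matrix, the best rank-$r$ approximation to $M+E$ is no worse than $M$ in Frobenius distance, so
\[
\|P_{\hat U_\perp}(M+E)\|_F \;=\; \min_{\operatorname{rank}(N)\le r}\|(M+E)-N\|_F \;\le\; \|(M+E)-M\|_F \;=\; \|E\|_F,
\]
and $\|P_{\hat U_\perp} E\|_F\le \|E\|_F$ trivially, giving $2\|E\|_F$ after triangle inequality. To get the $2\sqrt{r}\|E\|$ bound, I would use the rank argument: because $\operatorname{rank}(P_{\hat U_\perp} M)\le \operatorname{rank}(M)=r$, one has $\|P_{\hat U_\perp} M\|_F\le \sqrt{r}\,\|P_{\hat U_\perp} M\|\le 2\sqrt{r}\,\|E\|$ by the spectral-norm bound already proved.

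There is no serious obstacle here; the only subtlety is recognizing that $P_{\hat U_\perp}(M+E)$ coincides with the Eckart-Young-Mirsky residual, so that $M$ (being a competitor of rank $r$) may be substituted in. Care is needed for the Frobenius piece to avoid applying $\|\cdot\|_F\le \sqrt{p_1-r}\,\|\cdot\|$ to $P_{\hat U_\perp}E$, which would give a useless factor depending on ambient dimension; the correct move is to put the rank bound on $P_{\hat U_\perp} M$ itself rather than on $P_{\hat U_\perp} E$.
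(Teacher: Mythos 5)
Your proof is correct, and it is the standard argument for this lemma (the paper itself defers the proof to Lemma 6 of the cited tensor-SVD reference, which proceeds by exactly this decomposition $P_{\hat U_\perp}M = P_{\hat U_\perp}(M+E) - P_{\hat U_\perp}E$, Weyl's inequality for $\lambda_{r+1}(M+E)$, Eckart--Young--Mirsky for the Frobenius piece, and the rank-$r$ bound $\|A\|_F\le\sqrt{\rank(A)}\,\|A\|$). No gaps.
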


The following lemma gives a upper bound for $\mathbb{E}\exp(X^2/t)$ for sub-Gaussian distributed random variable $X$.
\begin{Lemma}\label{lm:sub-exponential}
	Suppose $X$ satisfies $\|X\|_{\psi_2} \leq B.$
	If $t \geq 4eB^2$, we have
	\begin{equation*}
		\begin{split}
			\mathbb{E}\exp(X^2/t) \leq 1+\sqrt{8/\pi} eB^2/t.
		\end{split}
	\end{equation*}
\end{Lemma}
\begin{proof}[Proof of Lemma \ref{lm:sub-exponential}] If $t \geq 4eB^2$,
	\begin{equation*}
		\begin{split}
			& \mathbb{E}\exp(X^2/t) \\
			= & 1 + \sum_{k=1}^\infty \mathbb{E}\frac{X^{2k}}{t^k k!} \leq 1 + \sum_{k=1}^\infty \frac{(2k)^k \cdot B^{2k}}{t^k \cdot \sqrt{2\pi} k^{k+.5} \cdot e^{-k}} \quad (\text{Stirling's Formula})\\
			\leq & 1 + \sum_{k=1}^\infty \left(\frac{2eB^2}{t}\right)^{k}\frac{1}{\sqrt{2\pi k}} \leq 1 + \frac{1}{\sqrt{2\pi}}\sum_{k=1}^\infty \left(\frac{2eB^2}{t}\right)^k\\
			\leq & 1 + \frac{2eB^2/t}{\sqrt{2\pi}(1 - 2eB^2/t)} \leq 1 + \sqrt{\frac{8}{\pi}} eB^2/t.
		\end{split}
	\end{equation*}
\end{proof}

The following lemma gives a simple construction of an orthogonal matrix of arbitrary dimension that satisfies the incoherence constraint.
\begin{Lemma}\label{lm:incoherence-orthogonoal-construction}
	Suppose $p \geq r \geq1$. There exists a $p$-by-$r$ matrix $Q$ with orthonormal columns, i.e., $Q \in \mathbb{O}_{p, r}$, such that 
	$$\max_{1\leq i \leq p} \|e_i^\top Q\|_2^2 \leq \frac{1}{\lfloor p/r\rfloor}.$$
\end{Lemma}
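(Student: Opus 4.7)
The plan is to give an explicit constructive proof. Set $k = \lfloor p/r \rfloor$ and $s = p - kr \in \{0, 1, \ldots, r-1\}$. I will exhibit a specific $Q \in \mathbb{O}_{p,r}$ built from a stack of scaled identity blocks, which simultaneously achieves orthonormal columns and the desired row-norm bound.

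Concretely, I would define
\begin{equation*}
Q = \begin{bmatrix} \frac{1}{\sqrt{k}} I_r \\ \frac{1}{\sqrt{k}} I_r \\ \vdots \\ \frac{1}{\sqrt{k}} I_r \\ 0_{s \times r} \end{bmatrix} \in \mathbb{R}^{p\times r},
\end{equation*}
where $\frac{1}{\sqrt{k}} I_r$ is repeated $k$ times and then $s$ zero rows are appended to reach total row count $p$. The first verification is orthonormality of columns: the $j$-th column of $Q$ is supported on the $k$ row indices $\{j, r+j, 2r+j, \ldots, (k-1)r+j\}$ with value $1/\sqrt{k}$, so it has unit $\ell_2$ norm, and two distinct columns have disjoint supports, hence are orthogonal. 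This gives $Q^\top Q = I_r$.

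The second verification is the row-norm bound. For any row index $1 \leq i \leq kr$, exactly one entry of $e_i^\top Q$ equals $1/\sqrt{k}$ and the rest vanish, so $\|e_i^\top Q\|_2^2 = 1/k$. For $kr < i \leq p$, the row is zero, so $\|e_i^\top Q\|_2^2 = 0$. Taking the maximum yields $\max_i \|e_i^\top Q\|_2^2 \leq 1/k = 1/\lfloor p/r\rfloor$, as required.

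There is essentially no obstacle here; the statement is an existence claim and the construction above is explicit. The only minor subtlety is handling the remainder $s$ when $r \nmid p$, which is dispatched by simply padding with zero rows — this preserves both the column orthonormality and the row-norm upper bound since added zero rows contribute nothing to either. No deeper tool (e.g., random construction, probabilistic method, or Hadamard/DFT matrix) is needed, though those would also work.
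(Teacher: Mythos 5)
Your construction is correct and is essentially the same as the paper's: both stack $\lfloor p/r\rfloor$ scaled identity blocks so that distinct columns have disjoint supports and every row has squared norm at most $1/\lfloor p/r\rfloor$. The only difference is in handling the remainder $p - r\lfloor p/r\rfloor$ rows — you pad with zeros while the paper fills them with a partial identity block and renormalizes the first few columns by $1/\sqrt{\lfloor p/r\rfloor+1}$ — and since the lemma only asks for an upper bound, your simpler padding is perfectly adequate.
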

\begin{proof}
	Let $\alpha = \lfloor p/r\rfloor$, $\beta = p - \alpha r$. Construct
	\begin{equation*}
		Q = \begin{bmatrix}
			I_r\\
			\vdots \\
			I_r\\
			I_\beta ~~ 0_{\beta \times (p-\beta)}
		\end{bmatrix}R,
	\end{equation*}
	where the $I_r$ block is repeated for $\alpha$ times in $Q$; $R$ is the $r$-by-$r$ diagonal matrix with first $\beta$ diagonal entries equal $1/\sqrt{\alpha+1}$ and the other diagonal entries equal $1/\sqrt{\alpha}$. It is easy to check that all columns of $Q$ are orthonormal, i.e., $Q\in \mathbb{O}_{p, r}$. Moreover,
	\begin{equation*}
		\max_{1\leq i \leq p}\|e_i^\top Q\|_2^2 \leq \min_{1\leq i \leq r} R_{ii}^2 = \frac{1}{\alpha} = \frac{1}{\lfloor p/r \rfloor}.
	\end{equation*}
\end{proof}

\end{document}